\numberwithin{equation}{section}
\newtheorem{theorem}{Theorem}[section]
\newtheorem{proposition}[theorem]{Proposition}
\newtheorem{lemma}[theorem]{Lemma}
\theoremstyle{definition}
\newtheorem{definition}{Definition}[section]
\newtheorem{remark}{Remark}[section]
\newcommand{\R}{\mathbb{R}}
\newcommand{\eeq}{\end{equation}}
\newcommand{\beq}{\begin{equation*}}
\newcommand{\lab}{\label}
\begin{document}
	
	\title[  prescribed the mass solution]{   Existence and local uniqueness of multi-spike solutions for Br\'{e}zis-Nirenberg problem with prescribed mass }

	\author{Zongyan Lv}
	\address{ Zongyan Lv,~ Center for Mathematical Sciences, Wuhan University of Technology, Wuhan 430070, P. R. China	}
	\email{zongyanlv0535@163.com}
	\author{ Xiaoyu Zeng}
	\address{ Xiaoyu Zeng,~Center for Mathematical Sciences, Wuhan University of Technology, Wuhan 430070, P. R. China}
	\email{ xyzeng@whut.edu.cn}
	\author{Huan-Song Zhou}
	\address{ Huan-Song Zhou,~Center for Mathematical Sciences, Wuhan University of Technology, Wuhan 430070, P. R. China}
	\email{hszhou@whut.edu.cn}
	\date{}
	
	\begin{abstract}
	In this paper, 	we consider  the following  Br\'{e}zis-Nirenberg problem with prescribed $ L^2$-norm (mass) constraint: 
		\begin{equation*}
			\begin{cases}
				-\Delta u=|u|^{2^*-2} u +\lambda_\rho u\quad \text { in } \Omega, \\
				u>0, \quad 	u \in H_0^1(\Omega), \quad \int_{\Omega} u^2dx=\rho,
			\end{cases}
		\end{equation*}
		where   $N \geqslant 6$,  $2^*=2 N /(N-2)$  is the critical Sobolev exponent, $\rho>0$ is a given small constant and $\lambda_\rho>0$ acts as an Euler-Lagrange multiplier.    For any $k\in \mathbb{R}^+$, we  construct a $k$-spike solutions in some suitable bounded domain $\Omega$. Our results extend those in \cite{BHG3,DGY,SZ}, where the authors obtained  one or two positive solutions corresponding to the (local) minimizer or mountain pass type critical point for the energy functional of above equation.   Furthermore, using  blow-up analysis and local Pohozaev identities arguments, we prove that the $k$-spike solutions are locally unique. Compared to the standard Br\'{e}zis-Nirenberg problem without the mass constraint, an additional difficulty arises in estimating the error caused by the differences in the Euler-Lagrange multipliers corresponding to different solutions. We overcome this difficulty by introducing novel observations and estimates related to the kernel of the linearized operators.
        
        	
		\noindent{\it  {\bf 2010 Mathematics Subject Classification:}} 35J20, 35J60, 35B09

\noindent{\it  {\bf Keywords:}} $ L^2$-constraint, Br\'{e}zis-Nirenberg problem, 
local uniqueness, blow-up analysis, Pohozaev identities

	\end{abstract}
	
	\maketitle
	
	\section{introduction}

The aim of this paper is to study the existence,  multiplicity and local uniquness of the  solutions for the following $L^2$-constrained   Br\'{e}zis-Nirenberg problem
			\begin{equation}\lab{equ:Main-Problem}
		\begin{cases}
			-\Delta u=|u|^{2^*-2} u +\lambda_\rho u\quad \text { in } \Omega \subset \mathbb{R}^N, \\
			u \in H_0^1(\Omega), \quad \int_{\Omega} u^2dx=\rho,
		\end{cases}
	\end{equation}
	where $\Omega$ is a bounded   domain in $\mathbb{R}^N$ with $N \geq 6$, $\lambda_\rho  \in\left(0, \lambda_1(\Omega)\right)$ is a suitable Euler-Lagrange multiplier   with  $\lambda_1(\Omega)$ being  the first eigenvalue of $-\Delta $ with zero Dirichlet
	boundary condition on $\Omega$.
	
	One primary motivation for studying \eqref{equ:Main-Problem} relies on the investigation of the  standing waves solutions for the nonlinear Schr\"{o}dinger equation
	\begin{align}\label{equ:241111-e2}
			-i \frac{\partial \Phi}{\partial t}=\Delta \Phi+f(|\Phi|) \Phi \quad \text { in } \mathbb{R}^+ \times \mathbb{R}^N,
	\end{align}
which emerges in the theoretical  invsetigation  on  Bose-Einstein condensation (\cite{BEC})  or  nonlinear optic problem (\cite{GPA,WK,LSJJ,LSJ}). Especially, the nonlinear Schr\"{o}dinger equation on  bounded domains in nonlinear optics  could describe the propagation of laser beams in hollow-core fibers (\cite{GF}).

The ansatz $\Phi(x, t)=e^{i \lambda t} u(x)$ for standing wave solutions induces the elliptic equation
	\begin{align}\label{equ:241111-e1}
-\Delta u+\lambda u=f(|u|)u \quad \text { in } \mathbb{R}^N.
	\end{align}
We generally  call  \eqref{equ:241111-e1} a fixed frequency problem provided that the parameter $\lambda \in \mathbb{R}$ is fixed. In the past decades, a lot of work, such as the existence and qualitative properties, have been done on this type  problem, see e.g.\cite{AS,BW,BN,CSS,MS,ZZ}. 

The $L^2$-norm of solution   $\Phi(t, \cdot)$ to \eqref{equ:241111-e2}, which represents the {\em total mass} of the system,  remains  invariant  for all  $t\in \mathbb{R}^+$.  As a consequence, the quantity  $\|\Phi(t, \cdot)\|_{L^2}$  serves as a crucial parameter  determining  the long time behavior of $\Phi(t, \cdot)$, as well as the threshold for finite time blow-up or orbital stabilities, see  for instance \cite{Caz,CL}.  From this perspective, investigating the $L^2$-normed  solution, namely,   solution \eqref{equ:241111-e1} satisfying
\begin{align}\label{constraint}
	\int_{\mathbb{R}^N} u^2dx=a,
\end{align}
 is always an important issue. Correspondingly,  the frequency $\lambda_a $ must be treated  as an  unknown Euler-Lagrange multipler  in \eqref{equ:241111-e1}. 
In general, the system  \eqref{equ:241111-e1}-\eqref{constraint}  is called fixed mass problem and the solution pair $(u,\lambda_a)$ is called a normalized solution.

In recent years, normalized solutions to the nonlinear Schr\"odinger equations have attracted increasing attention from scholars. A classical approach to finding normalized solutions of \eqref{equ:241111-e1}-\eqref{constraint} is through the variational method: one seeks critical points of the functional $I(u)$ constrained on $\mathcal{M}_a$, where
\begin{align*}
	I(u):=\frac{1}{2}\int_{\mathbb{R}^N}|\nabla u|^2dx-\int_{\mathbb{R}^N}F(u) dx, \quad \text{where} 
	\quad 	F(s):=\int_0^s f(t) d t,
\end{align*}
and
\begin{align*}
\mathcal{M}_a:=\big\{u\in H^1(\R^N): \int_{\mathbb{R}^N} u^2dx=a\big\}.
\end{align*}
Here, $\lambda$ appears as a Euler-Lagrange multiplier in \eqref{equ:241111-e1}.

When working on the whole space $\mathbb{R}^N$, the literature in this direction is extensive, and it is beyond the scope of this work to provide a comprehensive summary. Here, we only highlight some key contributions involving mass-supercritical nonlinearities, such as $f(|u|)u=|u|^{p-2}u$, with$2+\frac{4}{N}<p<2^*$.   
 To the best of our knowledge, the first result addressing the mass-supercritical case was established in \cite{JJ}, where Jeanjean proved the existence of normalized solutions by developing a mountain pass theorem on the
 $L^2$ constrained manifold. This work overcame the challenge that the energy functional is unbounded from below under the  $L^2$-constraint.  In \cite{TBNS}, Bartsch and Soave proved that the Pohozaev manifold is a natural constraint and introduced a standard method based on this manifold. Subsequently, the results of \cite{TBNS,JJ} were extended by Jeanjean and Lu \cite{LJLS} and Bieganowski and Mederski \cite{BJ} under some weaker assumptions. We also note that Soave \cite{SN,SN1} made numerous contributions to the study of combined power nonlinearities, i.e., $$f(|u|) u=|u|^{q-2} u+\beta|u|^{p-2} u, 2<p<2+\frac{4}{N}<q \leq 2^*.$$
Additionally, we refer to \cite{LJTL,WW} for treatments of nonlinearities involving the Sobolev critical exponent. Further works  addressing  systems of Schr\"odinger equations can be find in \cite{TLN,TBNS-1,TBNS,BZZ}.

 When working on bounded domains, the existence of normalized solutions has been studied in several works, including \cite{TQZ,CDCE,BHSG,BHG1,BHG2,BHG3,DG,DGY,SZ,WJ1,WJ2}. It is important to note that the methods developed for $\mathbb{R}^N$ cannot be directly applied to bounded domains, as the lack of dilation invariance prevents the use of tools like the Pohozaev manifold. This limitation significantly complicates the analysis.  Furthermore, if we consider the Sobolev critical case, the loss of compactness introduces additional challenges to the problem.  To the best of our knowledge,  there are only these papers \cite{BHG3,DGY,SZ} which
 have addressed the existence of normalized solutions to the  Br\'{e}zis-Nirenberg problem. In these works, the authors employed variational arguments to establish the existence of (local) minimizers or mountain pass-type solutions for the energy functional associated with \eqref{equ:Main-Problem}.

 In this paper, we aim to study the multiplicity, quantitative properties, and local uniqueness of solutions to \eqref{equ:Main-Problem}. Specifically, for any fixed $k\in\mathbb{N}^+$, we will prove  that \eqref{equ:Main-Problem} admits at least 
$k$ distinct solutions with $k$ spikes in suitably chosen bounded domains. Moreover, we prove that these $k$-spike solutions are {\em locally unique}, meaning that the number of such solutions can be precisely determined for domains with specific topological properties.
For these purposes, we introduce some notations. We first recall 
from \cite{G} that the equation $-\Delta u=u^{\frac{N+2}{N-2}}$ in $\mathbb{R}^N$ has a family of solutions
\begin{align*}
U_{x, \lambda}(y)=\frac{(N(N-2))^{(N-2) / 4}\lambda^{(N-2) / 2}}{\left(1+\lambda^2|y-x|^2\right)^{(N-2) / 2}} \text{ with }x \in \mathbb{R}^N, \lambda \in \mathbb{R}^{+},
\end{align*}
and  all $U_{x, \lambda}(y)$ potimize  the following  best Sobolev inequality
	\begin{align*}
	\mathcal{S}:=\inf\big\{ \|\nabla u\|^2_{L^2(\mathbb{R}^N)}\big/\| u\|_{L^{2^*}(\mathbb{R}^N)}: 0\neq u\in H^1(\mathbb{R}^N)\big\}.
\end{align*}
Now let $\mathrm{H}_0^1\left(\Omega\right)$ be the Hilbert space equipped with the usual inner product
$\langle u, v\rangle:=\int_{\Omega} \nabla u \nabla v$, which induces the norm $\|u\|=\left(\int_{\Omega}|\nabla u|^2\right)^{1 / 2}$.
For any $x \in \Omega$ and $\mu \in \mathbb{R}^{+}$, we define
$$
E_{x, \mu}=\Big\{v \in H_0^1(\Omega) \Big\lvert\,\Big\langle\frac{\partial P U_{x, \mu}}{\partial \mu}, v\Big\rangle=\Big\langle\frac{\partial P U_{x, \mu}}{\partial x_i}, v\Big\rangle=0, \text { for } i=1, \cdots, N\Big\}.
$$
Moreover, for any given $f \in H^1(\Omega)$, let $P$ denote the projection from $H^1(\Omega)$ onto $H_0^1(\Omega)$, i.e., 

$$
\text{$u:=P f$ is the solution of }\begin{cases}\Delta u=\Delta f, & \text { in } \Omega, \\ u=0, & \text { on } \partial \Omega .\end{cases}
$$


Let
 \begin{align*}
 	a^k:=\left(a_1, \cdots, a_k\right) \quad  \text{and} \quad  \mu^k:=\left(\mu_1, \cdots, \mu_k\right),
 \end{align*}
and $\Psi_k: \Omega^k \times\left(\mathbb{R}^{+}\right)^k \rightarrow \mathbb{R}$ be defined by
\begin{align*}
\Psi_k(x, \mu)=\left(M_k(x) \mu^{(N-2) / 2}, \mu^{(N-2) / 2}\right)\Big(\int_{\mathbb{R}^N} U_{0,1}^{\frac{N+2}{N-2}}\Big)^2- \left( \mu^k,\mu^k\right)\int_{\mathbb{R}^N} U_{0,1}^2, 
\end{align*}
where $\mu^{(N-2) / 2}:=\big(\mu_1^{(N-2) / 2}, \cdots, \mu_k^{(N-2) / 2}\big)^T$ and the matrix $M_k(x)=\big(m_{i j}(x)\big)_{1 \leq i, j \leq k}$ is defined by
\begin{align*}
	m_{i i}(x)=R\left(x_i\right), \quad  m_{i j}(x)=-G\left(x_i, x_j\right), \text { if } i \neq j .
\end{align*}
Here, $R(\cdot)$ and $G(\cdot,\cdot)$ denotes the Robin function  and Green function  of $\Omega$, which  are defined in Appendix A, respectively.

Set
\begin{align*}
	\mathcal{Q}_{\Omega,k}=\big\{(a^k,\mu^k)\in \Omega^k \times\left(\mathbb{R}^{+}\right)^k: \nabla_x \Psi_k\left(a^k,\mu^k\right)=0, \nabla_\lambda \Psi_k\left(a^k, \mu^k\right)=0\big\} .
	\end{align*}
    Then, we define a class of bounded  domain, on which  we seek $k$-spike solutions of \eqref{equ:Main-Problem} and prove their local uniqueness.
\begin{definition}
    We call a bounded domain $\Omega\subset\mathbb{R}^N$ admissible, provided that $\exists\ (a^k,\mu^k)\in \mathcal{Q}_{\Omega,k}$ for some $k\in \mathbb{N}^+$, such that $M_k\left(a^k\right)$ is a positive matrix and $\left(a^k, \mu^k\right)$ is a nondegenerate critical point of $\Psi_k$.
\end{definition}

Our first main result addresses the existence of $k$-spike solutions of \eqref{equ:Main-Problem}.
\begin{theorem}\label{th-3}
	Let $N \geq 6$ and  $\Omega\subset\mathbb{R}^N$ be an admissible domain. Suppose that  $ (a^k,\mu^k)\in \mathcal{Q}_{\Omega,k}$ staifies  $M_k\left(a^k\right)$ is a positive matrix and $\left(a^k, \mu^k\right)$ is a nondegenerate critical point of $\Psi_k$. Then  there exists a small $\rho_0>0$, such that for $\rho\in(0,\rho_0)$, \eqref{equ:Main-Problem} has a normalized  multispike solution $u_{\rho}$ with an associated Euler-Lagrange multiplier $\lambda_\rho$,
satisfying 
\begin{align}\label{equ:241120-e1}
    \left|\nabla u_{\rho}\right|^2 \rightharpoonup \mathcal{S}^{N / 2} \sum_{i=1}^k \delta_{a_i} \quad \text{as} \quad \rho \rightarrow 0^+,
\end{align}
and 
		\begin{align}\label{eq1.11}
			u_{\rho}(x)=	\sum_{j=1}^k P U_{x_{j,\rho}, \mu_{j,\rho}}+w_{\rho}, \text{ with }\left\|w_{\rho}\right\|=o_\rho(1) \text { and }  w_{\rho} \in \bigcap_{j=1}^k E_{x_{j,\rho}, \mu_{j,\rho}}. 
			\end{align} 
        Moreover, let $\mu_{j,\rho}=\left(u_{\rho}(x_{j,\rho})\right)^{\frac{2}{N-2}}$ for $j=1, \cdots, k, $, then as $\rho\to0^+$,
\begin{align}\label{eq1.12}
	x_{j,\rho} \rightarrow a_j, ~\rho^\frac{1}{2}\mu_{j,\rho} \to \Big(\int_{\mathbb{R}^N} U_{0,1}^2\sum_{i=1}^k \mu_i^{-2}\Big)^\frac12\mu_j,~\rho^\frac{4-N}{2}\lambda_\rho\to \Big(\int_{\mathbb{R}^N} U_{0,1}^2\sum_{i=1}^k \mu_i^{-2}\Big)^\frac{4-N}{2}.
\end{align}

\end{theorem}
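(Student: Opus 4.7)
I would carry out a Lyapunov--Schmidt finite-dimensional reduction, adapted to the fact that the Lagrange multiplier $\lambda_\rho$ is itself an unknown determined by the mass constraint.

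The first step is to introduce the rescaling suggested by \eqref{eq1.12}: set $\mu_j = \rho^{-1/2}\tilde\mu_j$ and $\lambda = \rho^{(N-4)/2}\tilde\lambda$, so that $\tilde\mu_j$, $\tilde\lambda$ and $x_j$ become order-one unknowns. Since $\int (PU_{x_j,\mu_j})^2 \sim \mu_j^{-2}\int U_{0,1}^2 = \rho\,\tilde\mu_j^{-2}\int U_{0,1}^2$ for $N\ge 5$, the mass constraint $\int u^2 = \rho$ reduces at leading order to $\sum_j \tilde\mu_j^{-2}\int U_{0,1}^2 = 1$, which matches the limits in \eqref{eq1.12}. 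Under this rescaling the term $\lambda u$ in the equation contributes at exactly the same order as the Robin/Green interaction terms $R(x_i)\mu_j^{2-N}$ and $G(x_i,x_j)(\mu_i\mu_j)^{-(N-2)/2}$ (this is where the dimension assumption $N\ge 6$ enters), which explains the structure of $\Psi_k$.

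Next I would look for solutions of the form \eqref{eq1.11} and split the equation $-\Delta u - u^{2^*-1} - \lambda u = 0$ according to the direct-sum decomposition of $H_0^1(\Omega)$ into $\bigcap_j E_{x_j,\mu_j}$ and its orthogonal complement spanned by $\partial_{\mu_j}PU_{x_j,\mu_j}$ and $\partial_{x_{j,i}}PU_{x_j,\mu_j}$. On the projection onto $\bigcap_j E_{x_j,\mu_j}$ the linearized operator
\begin{equation*}
L_{x,\mu,\lambda}\varphi := -\Delta\varphi - (2^*-1)\Bigl(\sum_{j=1}^k PU_{x_j,\mu_j}\Bigr)^{2^*-2}\varphi - \lambda\varphi
\end{equation*}
is uniformly invertible whenever $(x,\tilde\mu)$ lies in a fixed small neighborhood of $(a^k, c_*\mu^k)$, with $c_* := \bigl(\int U_{0,1}^2 \sum_i \mu_i^{-2}\bigr)^{1/2}$; this follows by standard bubble-configuration estimates, since $\lambda = o(1)$ and stays well below $\lambda_1(\Omega)$. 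A contraction argument then produces a unique $w = w_{x,\tilde\mu,\tilde\lambda}$ with $\|w\|=o_\rho(1)$, depending smoothly on the parameters.

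Substituting $u = \sum_j PU_{x_j,\mu_j} + w$ back yields a finite-dimensional reduced system: the $k(N+1)$ orthogonality conditions against the $\partial_{\mu_j}PU_{x_j,\mu_j}$ and $\partial_{x_{j,i}}PU_{x_j,\mu_j}$, together with the mass constraint $\int_\Omega u^2 = \rho$. A careful expansion using the asymptotics of $PU_{x,\mu}$ in terms of the Green and Robin functions shows that, after dividing by the appropriate power of $\rho$, the reduced system becomes
\begin{equation*}
\nabla_x\Psi_k(x,\tilde\mu) = o(1),\qquad \nabla_{\tilde\mu}\Psi_k(x,\tilde\mu) = o(1),\qquad \sum_{j=1}^k\tilde\mu_j^{-2}\int U_{0,1}^2 = 1 + o(1),
\end{equation*}
with $\tilde\lambda$ determined as an explicit function of $\tilde\mu$ at leading order. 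The nondegeneracy of $(a^k,\mu^k)$ as a critical point of $\Psi_k$, together with positivity of $M_k(a^k)$ (which guarantees $c_*>0$), makes the leading-order Jacobian invertible, and the implicit function theorem produces a unique solution $(x_\rho,\tilde\mu_\rho,\tilde\lambda_\rho)$ for each small $\rho$. The asymptotics \eqref{equ:241120-e1}--\eqref{eq1.12} follow from this construction combined with the contraction estimate on $w$.

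The main obstacle will be the coupling between the Lagrange multiplier and the mass constraint. In the fixed-frequency Br\'ezis--Nirenberg reduction $\lambda$ is an external parameter and one gets a closed finite-dimensional system in $(x,\mu)$; here $\lambda_\rho$ is internal and appears at the \emph{same} order as the leading Green/Robin interactions after rescaling, so the reduced problem is genuinely $(N+1)k+1$-dimensional. The resolution is that the mass equation is strictly monotone in the overall scale of $\tilde\mu$, so the extra Jacobian block is invertible independently of the $(x,\mu)$-block controlled by $\Psi_k$; this structural decoupling is exactly what admissibility and positivity of $M_k(a^k)$ provide, and it closes the argument.
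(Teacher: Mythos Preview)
Your approach is sound in outline, but the paper takes a much shorter, indirect route. Rather than carry out the Lyapunov--Schmidt reduction with $\lambda$ as an additional unknown, the paper simply invokes the existing fixed-frequency theory: by Musso--Pistoia \cite{MA} the unconstrained problem \eqref{equ:original-Problem} has a $k$-spike solution $u_\lambda$ for each small $\lambda>0$, and by Cao--Luo--Peng \cite{CLP} this $u_\lambda$ is \emph{unique} when $N\ge 6$. Uniqueness makes $\lambda\mapsto\|u_\lambda\|_{L^2(\Omega)}^2$ continuous; the expansion $\|u_\lambda\|_{L^2}^2=\sum_j\mu_{j,\lambda}^{-2}\int_{\mathbb{R}^N}U_{0,1}^2+\text{lower order}\to 0$ as $\lambda\to 0^+$; hence the intermediate value theorem yields, for each small $\rho$, some $\lambda_\rho$ with $\|u_{\lambda_\rho}\|_{L^2}^2=\rho$. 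The pair $(u_{\lambda_\rho},\lambda_\rho)$ is the desired normalized solution, and \eqref{equ:241120-e1}--\eqref{eq1.12} are read off directly from the known asymptotics of $u_\lambda$ in Proposition~\ref{Th-2}.

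Your direct $(N{+}1)k{+}1$-dimensional reduction is more laborious but self-contained: it does not presuppose the uniqueness machinery of \cite{CLP}, and via the implicit function theorem it would deliver existence and the precise asymptotics in one stroke (and, done carefully, local uniqueness as well) rather than through an IVT argument. The paper's shortcut also clarifies the role of the hypothesis $N\ge 6$: it is needed not for the order-matching you cite (that matching already holds for $N\ge 5$, as in \cite{MA}), but to invoke the local-uniqueness result of \cite{CLP}, which is what gives continuity of $\lambda\mapsto u_\lambda$ and makes the intermediate value argument go through.
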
 

We establish  the normalized $k$-spike solution  in Theorem \ref{th-3}  based on the framework of \cite{CLP,MA,Rey}, where   the spike solution for the following classical   Br\'{e}zis-Nirenberg problem were investigated:
	\begin{equation}\lab{equ:original-Problem}
		\begin{cases}
			-\Delta u=|u|^{2^*-2} u +\lambda u\quad \text { in } \Omega\subset{\mathbb{R}^N}, \\
			u>0, \quad 	u \in H_0^1(\Omega).
		\end{cases}
	\end{equation}
It follows from the pioneering paper \cite{BN}  that  \eqref{equ:original-Problem}  has a positive solution for every $\lambda\in(0,\lambda_1(\Omega))$ and $N\geq 4$. 
In \cite{Rey},  Rey showed that if a solution $u_{\lambda}$ of \eqref{equ:original-Problem} satisfies
	\begin{align}\label{equ:241113-e1}
			\left|\nabla u_{\lambda}\right|^2 \rightharpoonup \mathcal{S}^{N / 2} \delta_{x_0}, \text { as } \lambda \rightarrow 0^+,
	\end{align}
	then $x_0$ must be a critical point of $R(x)$. Conversely if $x_0$ is a nondegenerate critical point of $R(x)$ and $N \geq 5$, then \eqref{equ:original-Problem} has a solution $u_{\lambda}$ satisfying \eqref{equ:241113-e1}.  Han in \cite{HZC}  established the asymptotic expansion for  the positive solutions of \eqref{equ:original-Problem}.  Besides, Glangetas \cite{LG} studies the  uniqueness  of positive solutions.  Musso and Pistoia in Ref.\cite{MA} show that,
	 if $\left(a^k, \mu^k\right)$ is a nondegenerate critical point of $\Psi_k$,  then there exists a family of solution  $u_{\lambda}$ to \eqref{equ:original-Problem} satisfying 
	\begin{equation}\label{1.4}
		\left|\nabla u_{\lambda}\right|^2 \rightharpoonup \mathcal{S}^{N / 2} \sum_{i=1}^k \delta_{a_i}~ \text { as }~\lambda \rightarrow 0^+,
	\end{equation}
	which blows up and concentrates at the points $a_1, \cdots, a_k$ with rates of concentration 
	$ \mu_{1,\lambda} , \ldots,  \mu_{k,\lambda}$ such that $\lim\limits_{\lambda \rightarrow 0}\mu_{\lambda} \lambda^{1 /(N-4)} \rightarrow$ $\mu_j$, for $j=1, \cdots, k.$
    Furthermore, Cao, Luo and Peng \cite{CLP} obtained the structure of solutions satisfying \eqref{1.4} and showed that $k$-spike solution of \eqref{equ:original-Problem} is locally  unique  by using various local Pohozaev identities and blow-up analysis.

\begin{remark} 

 In  \cite[Theorem 11]{BHG3},  Noris, Tavares and Verzini proved that  if
$0<\rho \leqslant  \rho^*$ $:= \frac{2}{N} (\frac{1}{\mathcal{S}})^{\frac{N-2}{2}}(\lambda_1(\Omega)) ^{-1}$
then the problem
\begin{equation*}
\left\{\begin{array}{ll}
	-\Delta u+\omega u= |u|^{2^*-1} \\
	\int_{\Omega} u^2=\rho, \quad u \in H_0^1(\Omega)
\end{array}\right.
\end{equation*}
admits a positive solution $u_1$ for a suitable $\omega \in\left(-\lambda_1(\Omega), 0\right)$, which is a local minimizer of the associated energy. Moreover, there exists $\alpha\geq \lambda_1(\Omega) $ such that	  $$
\int_{\Omega}|\nabla u_1|^2\leq \rho \alpha\to 0 \quad \text{as } \quad \rho \to 0^+.
$$ This means that the local minimizer $u_1$ is distinct from  the $k$-spike solution obtained in our Theorem \ref{th-3}, since the $k$-spike solution satisfies $$\int_{\Omega}|\nabla u_{\rho}|^2dx \to k \mathcal{S}^{N / 2} \quad \text{as } \quad \rho \to 0^+.$$

\end{remark}


\begin{remark} 

Pierotti, Verzini, and Yu in \cite{DGY} generalized several results from \cite[Theorem 11]{BHG3} and established the existence of a second positive solution for $0<\rho<\rho^*$
  in star-shaped domains. This solution is of mountain pass type. We also note that Song and Zou \cite{SZ} obtained two distinct positive solutions in star-shaped domains, corresponding to a local minimizer and a mountain pass solution, respectively. In contrast to these results, our approach does not require the assumption of star-shaped domains. Furthermore, for $k\ge2$, the normalized $k$-spike solutions in Theorem \ref{th-3} are high-energy solutions, which generally cannot be classified as either local minimizers or mountain pass solutions. Additionally, we will prove that the normalized multispike solutions are locally unique.

\end{remark}

In what follows, we investigate the local uniqueness and count the number of $k$-spike type solutions. To this end, for $a^k=$ $\left(a_1, \cdots, a_k\right)\in \Omega^k$,   we introduce 
\begin{align*}
	\mathcal{Q}_{a^k}:=\big\{\mu^k=\left(\mu_1, \cdots, \mu_k\right)\in \big(\mathbb{R^+}\big)^k: \left(a^k,\mu^k\right) \in \mathcal{Q}_{\Omega,k}\big\} .
	\end{align*}
Our result can be stated as follows.

\begin{theorem}\label{th-4}
	For $N \geq 6$ and any given $a^k=\left(a_1, \cdots, a_k\right)$, suppose that $M_k\left(a^k\right)$ is a positive matrix and $\left(a^k, \mu^k\right)$ is a nondegenerate critical point of $\Psi_k$. Then there exists small $\rho_0>0$, such that  for any  $\rho\in (0,\rho_0)$, the normalized multispike $u_{\rho}(x)$ satisfying \eqref{eq1.11} and \eqref{eq1.12} is unique.
    Moreover, if $\left(a^k,\mu^k\right)$ is always a nondegenerate critical point of $\Psi_k$ for any $\mu^k\in \mathcal{Q}_{a^k}$, then the number of solutions to \eqref{equ:Main-Problem} satisfying  \eqref{equ:241120-e1}  $=\sharp \mathcal{Q}_{a^k}$,
	where $\sharp \mathcal{Q}_{a^k}$ is the number of the elements in the set $\mathcal{Q}_{a^k}$.
\end{theorem}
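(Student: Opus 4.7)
The plan is a contradiction argument combining blow-up analysis with local Pohozaev identities in the spirit of \cite{CLP}, but with an additional device to handle the variable Lagrange multiplier forced on us by the $L^2$-constraint. Suppose two distinct $k$-spike solutions $u^{(1)}_\rho,u^{(2)}_\rho$ both satisfy \eqref{eq1.11}--\eqref{eq1.12}, with multipliers $\lambda^{(1)}_\rho,\lambda^{(2)}_\rho$, and set
\begin{align*}
\eta_\rho := \frac{u^{(1)}_\rho - u^{(2)}_\rho}{\|u^{(1)}_\rho - u^{(2)}_\rho\|_{L^\infty(\Omega)}}, \qquad \beta_\rho := \frac{\lambda^{(1)}_\rho - \lambda^{(2)}_\rho}{\|u^{(1)}_\rho - u^{(2)}_\rho\|_{L^\infty(\Omega)}},
\end{align*}
so that $\|\eta_\rho\|_{L^\infty}=1$ and $\eta_\rho$ solves $-\Delta\eta_\rho = V_\rho(x)\eta_\rho + \lambda^{(1)}_\rho\eta_\rho + \beta_\rho u^{(2)}_\rho$, with $V_\rho(x)=(2^*-1)\int_0^1(tu^{(1)}_\rho+(1-t)u^{(2)}_\rho)^{2^*-2}\,dt$. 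The mass constraint, rewritten as $\int_\Omega(u^{(1)}_\rho+u^{(2)}_\rho)\eta_\rho\,dx = 0$, is the orthogonality relation that will ultimately control $\beta_\rho$.

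Next I would rescale near each concentration point: with $\tilde\eta_{j,\rho}(y):=\eta_\rho(x_{j,\rho}+\mu_{j,\rho}^{-1}y)$, elliptic regularity yields $C^1_{\mathrm{loc}}$-convergence along a subsequence to a bounded solution of $-\Delta\eta=\tfrac{N+2}{N-2}U_{0,1}^{4/(N-2)}\eta$ on $\mathbb{R}^N$, which classically lies in the kernel spanned by the scaling and translation generators $\psi_0,\psi_1,\dots,\psi_N$, so $\eta_{j,\infty}=c_{j,0}\psi_0+\sum_{i=1}^N c_{j,i}\psi_i$. Away from the spikes, an integral representation through the Dirichlet Green's function, together with the remainder bound on $w^{(\ell)}_\rho$, propagates decay of $\eta_\rho$ from a fixed neighbourhood of the $a_j$'s to all of $\Omega$. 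Then I would deploy two families of local Pohozaev identities on balls $B(x_{j,\rho},\delta)$: multiplication by $\partial_{x_i}u^{(\ell)}_\rho$-type test fields and passage to the limit forces $c_{j,i}=0$ for $i=1,\dots,N$ and all $j$; multiplication by $(x-x_{j,\rho})\cdot\nabla u^{(\ell)}_\rho+\tfrac{N-2}{2}u^{(\ell)}_\rho$ yields the scaling identities, which after substituting the decomposition \eqref{eq1.11} and invoking the definition of $M_k$ assemble into a linear system for the vector $(c_{1,0},\dots,c_{k,0})$ whose leading-order matrix is precisely the Hessian of $\Psi_k$ in the $\mu$-variables at $(a^k,\mu^k)$. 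The new term arising from $\beta_\rho u^{(2)}_\rho$ is controlled using the orthogonality relation above together with a sharpened pointwise estimate on $u^{(1)}_\rho-u^{(2)}_\rho$, showing that $\beta_\rho$ enters these identities at strictly lower order than the Hessian block. Nondegeneracy of $(a^k,\mu^k)$ then forces $c_{j,0}=0$, whence $\eta_\rho\to 0$ locally uniformly near each spike and, by the representation, globally on $\Omega$, contradicting $\|\eta_\rho\|_{L^\infty}=1$. For the cardinality statement, Theorem~\ref{th-3} gives at least one $k$-spike solution per nondegenerate $\mu^k\in\mathcal{Q}_{a^k}$, and local uniqueness forces exactly one, so summation over $\mu^k$ yields $\sharp\mathcal{Q}_{a^k}$.

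The hard part is the bookkeeping for the new inhomogeneous term $\beta_\rho u^{(2)}_\rho$: in the fixed-frequency setting of \cite{CLP} it is absent, while a naive bound would place its contribution in the scaling Pohozaev identity at the same order as the Hessian block, obstructing any contradiction. Turning the $L^2$-constraint into a quantitative orthogonality for $\eta_\rho$, and simultaneously upgrading the pointwise decay of $u^{(1)}_\rho-u^{(2)}_\rho$ beyond the trivial $L^\infty$-normalisation, are presumably the \emph{novel observations and estimates related to the kernel of the linearized operators} mentioned in the abstract, and form the technical heart of the argument; once this bookkeeping is in place, the argument closes via nondegeneracy of $\Psi_k$ in the same spirit as in the unconstrained problem.
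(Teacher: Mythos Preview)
Your overall architecture---contradiction via the normalized difference $\eta_\rho$, blow-up near each spike, identification of the limit in the kernel of the linearized operator, elimination of the kernel coefficients via local Pohozaev identities and the nondegeneracy of $\Psi_k$---matches the paper. But there is a genuine gap at the blow-up step, and it is precisely the point where the paper's new idea enters.

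You assert that the rescaled functions $\tilde\eta_{j,\rho}$ converge to a bounded solution of $-\Delta\eta=\tfrac{N+2}{N-2}U_{0,1}^{4/(N-2)}\eta$. This is not justified: after rescaling, the inhomogeneous term contributes
\[
\mu_{j,\rho}^{-2}\,\beta_\rho\, u^{(2)}_\rho\bigl(x_{j,\rho}+\mu_{j,\rho}^{-1}y\bigr)\;\sim\;\beta_\rho\,\mu_{j,\rho}^{(N-6)/2}\,U_{0,1}(y),
\]
and combining the two equations with the mass constraint (the orthogonality you write down) yields only an \emph{expression}
\[
\rho\,\beta_\rho\;=\;-\tfrac{4}{N-2}\sum_{j=1}^k \mu_{j,\rho}^{-(N-2)/2}\int_{\mathbb{R}^N}U_{0,1}^{(N+2)/(N-2)}\tilde\eta_{j,\rho}\,dx\;+\;\text{l.o.t.},
\]
so that $\beta_\rho\sim\bar\mu_\rho^{(6-N)/2}$ a priori. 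Hence the inhomogeneous term is $O(1)$, not $o(1)$, and the limit equation is the \emph{nonlocal} system
\[
-\Delta\xi_j=\tfrac{N+2}{N-2}\,U_{0,1}^{4/(N-2)}\xi_j\;-\;\tfrac{4C_*}{N-2}\,U_{0,1}\int_{\mathbb{R}^N}U_{0,1}^{(N+2)/(N-2)}\xi_j\,dx.
\]
The paper's key observation is that testing this against $\bar\xi:=x\cdot\nabla U_{0,1}+\tfrac{N-2}{2}U_{0,1}$ (which is $\psi_0$, hence in the kernel of the local operator) and using $\int_{\mathbb{R}^N}U_{0,1}\,\bar\xi\,dx=-\int_{\mathbb{R}^N}U_{0,1}^2\,dx\neq 0$ forces $\int_{\mathbb{R}^N}U_{0,1}^{(N+2)/(N-2)}\xi_j\,dx=0$. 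Thus the nonlocal term vanishes \emph{a posteriori}, $\xi_j\in\mathrm{span}(\psi_0,\dots,\psi_N)$ after all, and---crucially---feeding this vanishing back into the expression for $\beta_\rho$ upgrades it to $\rho\,\beta_\rho=o(\bar\mu_\rho^{-N/2})$, which is exactly what makes the $\beta_\rho$-contributions lower order in the subsequent Pohozaev identities. You locate the difficulty in the Pohozaev step and hope the mass orthogonality alone suppresses $\beta_\rho$; in fact that orthogonality only yields the expression above, and the decisive ingredient is the $\psi_0$-testing argument on the nonlocal limit equation.
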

   We adopt some  novel strategy based on the local Pohozaev identity method to  study the local uniqueness of spike solutions of Brézis-Nirenberg problem under the constraint of prescribed mass. The local Pohozaev identity method was introduced by \cite{Grossi} to calculate the number of single-peak solutions for the Schr\"odinger equation.  Whereafter, this method was developed to study the uniqueness of peak solutions for some different equations, see \cite{CLP,Grossi,GHLY,GLWZ,GLW, LPWY} and the references therein.

This paper is organized as follows. In Section 2, we will prove Theorem \ref{th-3}, while in Section 3, we prove the local uniqueness of the normalized  solutions.  In section 4,  we give the defination of Green's function and some useful estimates.

\section{Existence of normalized solutions}

\begin{proposition}\label{Th-2} 
	Let $N \geq 5$  and  assume that $ (a^k,\mu^k)\in \mathcal{Q}_{\Omega,k}$ is a nondegenerate critical point of $\Psi_k$, then there exists $\lambda_0>0$ small such that  for all $\lambda\in(0,\lambda_0]$, equation \eqref{equ:original-Problem} has a solution $u_{\lambda}(x)$ satisfying \eqref{1.4} and and $M_k\left(a^k\right)$ is a nonnegative matrix. $u_{\lambda}(x)$ can be written as
	\begin{align}\label{equ:241113-e30}
		u_{\lambda}=\sum_{j=1}^k P U_{x_{j,\lambda}, \mu_{j,\lambda}}+w_{\lambda}, 
	\end{align}
	satisfying, for $j=1, \cdots, k, \mu_{j,\lambda}:=\left(u_{\lambda}(x_{j,\lambda})\right)^{\frac{2}{N-2}}$,
	\begin{align}\label{2.2}
		x_{j,\lambda} \rightarrow a_j, ~\mu_{j,\lambda} \rightarrow+\infty,~\left\|w_{\lambda}\right\|=o(1) ~\text { and } ~w_{\lambda} \in \bigcap_{j=1}^k E_{x_{j,\lambda}, \mu_{j,\lambda}}.
	\end{align}
	Moreover, assume additionally that $M_k\left(a^k\right)$ is  positive, then up to a subsequence, 
	\begin{equation}\label{equ:241108-e5}
	\lim _{\lambda \rightarrow 0}\left(\lambda^{\frac{1}{N-4}} \mu_{j, \lambda}\right)^{-1}=\mu_j, \text { for } j=1, \cdots, k,
\end{equation}
	and 
	\begin{equation}\label{equ:241014-e5}
		\big|x_{j,\lambda}-a_j\big|=\left\{\begin{array}{ll}
			O(\frac{1}{\mu_{\lambda}}), & \text { if } N=5, \vspace{2mm}\\
			O(\frac{1}{\mu_{\lambda}^2}), & \text { if } N \geq 6,
		\end{array}, \ \big |\mu_j-(\lambda^{\frac{1}{N-4}} \mu_{j,\lambda})^{-1}\big|= \begin{cases}O(\frac{1}{\mu_{\lambda}}), & \text { if } N=5,  \vspace{2mm} \\
			O(\frac{1}{\mu_{\lambda}^2}), & \text { if } N \geq 6 .\end{cases}\right.
	\end{equation}
    Especially, the solution $u_\lambda$ satisfying the estimates of \eqref{equ:241113-e30}-\eqref{equ:241108-e5} is unique  provided  $N\geq6$.
\end{proposition}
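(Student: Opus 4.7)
The proof follows the Lyapunov--Schmidt reduction scheme of \cite{Rey,MA,CLP}, combined with a blow-up/local-Pohozaev argument for the uniqueness part. I would proceed in three stages.

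\emph{Existence.} Fix a small neighborhood $\mathcal{N}$ of $(a^k,\mu^k)$ in the parameter space of $(x^k,\tilde\mu^k)$, where $\tilde\mu_j:=(\lambda^{1/(N-4)}\mu_j)^{-1}$. For each $(x^k,\tilde\mu^k)\in\mathcal{N}$, write $u=V+w$ with $V=\sum_{j=1}^k PU_{x_j,\mu_j}$ and look for $w\in E_{x^k,\mu^k}:=\bigcap_j E_{x_j,\mu_j}$. The key input is the well-known nondegeneracy of $U_{0,1}$: the kernel of $-\Delta-(2^*-1)U_{0,1}^{2^*-2}$ is the $(N+1)$-dimensional span of $\partial_\mu U_{0,1}$ and $\partial_{x_i}U_{0,1}$. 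Inverting the linearized operator on $E_{x^k,\mu^k}$ and applying the Banach contraction principle provides a unique small $w_\lambda(x^k,\mu^k)$ solving the orthogonal part of the equation. Substituting back, the reduced functional admits the expansion
\[J_\lambda(x^k,\mu^k)=kA_1+A_2\,\lambda^{\frac{N-2}{N-4}}\,\Psi_k(x^k,\tilde\mu^k)+o\!\left(\lambda^{\frac{N-2}{N-4}}\right)\]
for explicit positive constants $A_1,A_2$. The nondegenerate critical point $(a^k,\mu^k)$ of $\Psi_k$ then yields, via the implicit function theorem, a nearby critical point of $J_\lambda$, producing a solution $u_\lambda$ satisfying \eqref{equ:241113-e30}--\eqref{2.2} with concentration \eqref{1.4}.

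\emph{Sharp rate estimates.} To derive \eqref{equ:241108-e5}--\eqref{equ:241014-e5}, I would test the equation against $\partial_{\mu_j}PU_{x_{j,\lambda},\mu_{j,\lambda}}$ and $\partial_{(x_j)_i}PU_{x_{j,\lambda},\mu_{j,\lambda}}$, using the standard expansion $PU_{x,\mu}=U_{x,\mu}-C_N\mu^{-(N-2)/2}H(x,\cdot)+O(\mu^{-N/2})$, where $H$ is the regular part of the Green function. The leading-order terms reproduce $\nabla\Psi_k(a^k,\mu^k)=0$; the next-order corrections, controlled by the bound on $w_\lambda$ together with the Green-function remainder, produce errors of size $O(\mu_\lambda^{-1})$ when $N=5$ and $O(\mu_\lambda^{-2})$ when $N\geq6$. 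The improvement in dimension $N\geq6$ is due to the fact that the $\mu^{-(N-2)}$ Green-function term becomes strictly smaller than the quadratic Taylor contribution, whereas for $N=5$ it is comparable.

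\emph{Local uniqueness for $N\geq6$.} This is the main obstacle. Suppose $u_\lambda^{(1)}$ and $u_\lambda^{(2)}$ both satisfy the above estimates, and set
\[\eta_\lambda:=\frac{u_\lambda^{(1)}-u_\lambda^{(2)}}{\|u_\lambda^{(1)}-u_\lambda^{(2)}\|_{L^\infty(\Omega)}},\]
so that $\|\eta_\lambda\|_\infty=1$ and $\eta_\lambda$ satisfies the linear equation $-\Delta\eta_\lambda=c_\lambda\eta_\lambda+\lambda\eta_\lambda$ with $c_\lambda=(2^*-1)\int_0^1[tu_\lambda^{(1)}+(1-t)u_\lambda^{(2)}]^{2^*-2}dt$. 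Rescaling $\tilde\eta_{j,\lambda}(y):=\eta_\lambda(x_{j,\lambda}^{(1)}+(\mu_{j,\lambda}^{(1)})^{-1}y)$ and passing to the limit, elliptic estimates yield $\tilde\eta_{j,\lambda}\to\eta_j\in\ker(-\Delta-(2^*-1)U_{0,1}^{2^*-2})$, so
\[\eta_j=\alpha_j\psi_0+\sum_{i=1}^N\beta_{j,i}\psi_i,\quad\psi_0=\partial_\mu U_{0,1},\ \psi_i=\partial_{x_i}U_{0,1}.\]
The crux is to show $\alpha_j=\beta_{j,i}=0$. For this I would apply local Pohozaev identities on $B_\delta(x_{j,\lambda}^{(1)})$: multiply the equations for $u_\lambda^{(s)}$ ($s=1,2$) by $\partial_{x_i}u_\lambda^{(s)}$ and by $(y-x_{j,\lambda}^{(1)})\cdot\nabla u_\lambda^{(s)}+\tfrac{N-2}{2}u_\lambda^{(s)}$, integrate over $B_\delta(x_{j,\lambda}^{(1)})$, subtract the two identities, and expand the boundary integrals via the Green function. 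The outcome is a linear system $\mathbb{A}_\lambda(\alpha,\beta)=o(1)$ whose limiting matrix $\mathbb{A}_0$ coincides, up to nonzero constants, with the Hessian of $\Psi_k$ at $(a^k,\mu^k)$; nondegeneracy thus forces $(\alpha,\beta)=0$, so $\tilde\eta_{j,\lambda}\to0$ on compact sets. Away from the concentration set, a Green's representation argument combined with the pointwise decay of $c_\lambda$ gives $\eta_\lambda\to0$ as well. Consequently $\|\eta_\lambda\|_\infty\to0$, contradicting $\|\eta_\lambda\|_\infty=1$. The hypothesis $N\geq6$ is essential at this step, since it guarantees that the Green-function error terms entering the Pohozaev identities remain of strictly lower order than the principal Hessian block of $\Psi_k$—a comparison which breaks down in $N=5$.
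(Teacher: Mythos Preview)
Your proposal is correct and follows exactly the same approach as the paper: the paper's own proof is nothing more than a citation to \cite{MA} for existence and to Theorems~1.1--1.2 and Proposition~2.5 of \cite{CLP} for the structure, rate estimates, and uniqueness, and what you have written is precisely a sketch of the Lyapunov--Schmidt reduction and blow-up/Pohozaev arguments contained in those references.
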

\begin{proof}
    It has been proved by \cite{MA} that equation \eqref{equ:original-Problem} has a solution $u_{\lambda}(x)$ satisfying \eqref{1.4}.  Other conclusions can be found in Theorems 1.1-1.2 and Proposition 2.5  of \cite{CLP}.
\end{proof}


\noindent{ \bf{Proof of Theorem \ref{th-3}}. } 
From Proposition \ref{Th-2}, we  know that there are small $\lambda_0>0$, such that for $\lambda \in (0,\lambda_0] \subset  (0,\lambda_1)$, the problem \eqref{equ:original-Problem} has a unique solution $u_\lambda$ satisfies 
\begin{equation}\label{eq2.5}
		\left|\nabla u_{\lambda}\right|^2 \rightharpoonup \mathcal{S}^{N / 2} \sum_{i=1}^k \delta_{a_i}~ \text { as }~\lambda \rightarrow 0^+,
	\end{equation}
and 
\begin{align}\label{equ:241014-e3}
	u_\lambda= \sum_{j=1}^{k} P U_{x_{j,\lambda}, \mu_{j,\lambda}}+\omega_\lambda,
\end{align}
and $u_\lambda$ satisfies \eqref{2.2} and \eqref{equ:241108-e5}. Moreover, from Proposition A.2. in \cite{CLP}, we obtain that 
\begin{align}\label{equ:240914-e5}
	\left\|w_{\lambda}\right\|= \begin{cases}
		\vspace{2mm} O\big(\frac{(\log \mu_{\lambda}^2)^{2 / 3}}{\mu_{\lambda}^4}+\frac{\lambda  ( \log \mu_{\lambda} )^{2 / 3}}{\mu_{\lambda}^2}\big), & \text { if } N=6, \\ O\big(\frac{1}{\mu_{\lambda}^{(N+2) / 2}}+\frac{\lambda}{\mu_{\lambda}^2}\big), & \text { if } N>6,\end{cases}
\end{align}
where $\mu_\lambda:=\min\{\mu_{1,\lambda},...,\mu_{k,\lambda}\}$.
Let
\begin{align*}
	v_\rho=\frac{  \sqrt{\rho } u_\lambda  }{  \left\|u_\lambda\right\|_{L^2\left(\Omega\right)}  },
\end{align*}
then $ 	v_\rho $ satisfies the equation
\begin{equation*}
	\begin{cases}
		-\Delta 	v_\rho=\lambda	v_\rho+ \left(  \frac{\left\|u_\lambda\right\|_{L^2\left(\Omega\right)}}{\sqrt{\rho }} \right)^{\frac{N-2}{4}}    	v_\rho^{2^*-1}  \quad \text { in } \Omega, \\
		\int_{\Omega} v^2_\rho dx=\rho, \quad 	v_\rho>0, \quad 		v_\rho \in H_0^1(\Omega) .
	\end{cases}
\end{equation*}
Therefore, to show that $v_\rho $ satisfies \eqref{equ:Main-Problem}, it remains  to prove that there exists 
\begin{equation}\label{eq2.8}
    \text{$\lambda=\lambda_\rho\in (0,\lambda_0]$, such that  $\left\|u_{\lambda\rho}\right\|^2_{L^2\left(\Omega\right)} ={\rho }$.}
\end{equation}
 Indeed, by the uniqueness of the solution $u_\lambda, \ \lambda\in (0,\lambda_0]$, one can easily show that $\left\|u_\lambda\right\|_{L^2\left(\Omega\right)}$ is continuous w.r.t. $\lambda\in (0,\lambda_0]$. 
Moreover, it follows from  \eqref{equ:241113-e30} that 
\begin{equation}\label{eq2.9}
    \begin{aligned}
		\left\|u_\lambda\right\|^2_{L^2\left(\Omega\right)} & = \int_{\Omega} \Big( \sum_{j=1}^{k} P U_{x_{j,\lambda}, \mu_{j,\lambda}}+\omega_\lambda  \Big)^2 dx\\
		& = \int_{\Omega}\sum_{j=1}^{k}  \left( P U_{x_{j,\lambda}, \mu_{j,\lambda}}  \right)^2  \\    
		&	+O\Big(  \sum_{i,j=1}^{k} \int_{\Omega}  P U^2_{x_{j,\lambda}, \mu_{j,\lambda}}    +\sum_{j=1}^{k}  \left\|P U_{x_\lambda,\mu_\lambda}\right\|_{L^2(\Omega)}\left\|\omega_\lambda\right\|_{L^2(\Omega)}+\left\|\omega_\lambda\right\|_{L^2(\Omega)}^2\Big) \\
		& = \sum_{j=1}^{k}\frac{1}{ \mu^2_{j,\lambda}} \int_{\mathbb{R}^N} U^2_{0,,1}dx+O\left(  \frac{1}{\mu^{\frac{N}{2}}_\lambda} + \frac{1}{\mu^2_\lambda}\left\|\omega_\lambda\right\|_{L^2(\Omega)}^2+ \left\|\omega_\lambda\right\|_{L^2(\Omega)}^2\right)\\
        &\to0^+ \text{ as } \lambda\to0^+.
	\end{aligned}
\end{equation}
Set $\rho_0:=\max_{\lambda\in(0,\lambda_0]}\left\|u_\lambda\right\|^2_{L^2\left(\Omega\right)}$, then for any $\rho<\rho_0$, there exists at least one $\lambda_\rho\in(0,\lambda_0]$  such that \eqref{eq2.8} holds. Hence, $v_\rho $ satisfies \eqref{equ:Main-Problem}. 

From \eqref{eq2.8} and \eqref{eq2.9} we can see that 
\begin{align*}
		\lim\limits_{\rho \to 0} \sum\limits_{j=1}^{k}\frac{1}{\rho \mu^2_{j,\lambda_\rho}}= \big(\int_{\mathbb{R}^N} U^2_{0,,1}dx\big)^{-1}.
	\end{align*}
This together with \eqref{2.2} \eqref{equ:241108-e5} indicates \eqref{eq1.12}. Using \eqref{eq2.5}, \eqref{equ:241014-e3}, and \eqref{equ:240914-e5}  we further obtain \eqref{equ:241120-e1} and \eqref{eq1.11}.
The proof of Theorem  \ref{th-3} is complete.
\qed

\section{The local uniqueness of normalized problem}
	
To obtain the  local uniqueness of such type of solutions, we need to estimate the
difference between two solutions concentrating at the same points.

Let $u_{\rho}^{(1)}(x), u_{\rho}^{(2)}(x)$ be two different solutions of \eqref{equ:Main-Problem} satisfying \eqref{equ:241120-e1}. Under the assumption that $M_k\left(a^k\right)$ is a positive matrix, we find from Theorem \ref{th-3} that $u_{\rho}^{(l)}(x)$ can be written as
\begin{equation*}
	u_{\rho}^{(l)}=\sum_{j=1}^k P U_{x_{j, \rho}^{(l)}, \mu_{j, \rho}^{(l)}}+w_{\rho}^{(l)},\ \ l=1,2,
\end{equation*}
satisfying, for $j=1, \cdots, k,  ~\mu_{j,\rho}^{(l)}=\big(u_{\rho}(x_{j,\rho}^{(l)})\big)^{\frac{2}{N-2}}$,

\begin{align*}
	x_{j,\rho}^{(l)} \rightarrow a_j,~\big(\rho^{\frac{1}{N-4}} \mu_{j,\rho}^{(l)}\big)^{-1} \rightarrow \mu_j,~\|w_{\rho}^{(l)}\|=o(1)~ \text { and } ~w_{\rho}^{(l)} \in \bigcap_{j=1}^k E_{x_{j,\rho}^{(l)}, \mu_{j,\rho}^{(l)}} .
\end{align*}

 Similar to \eqref{equ:240914-e5}, we also have
\begin{align}\label{equ:241114-e4}
	\left\| w_{\rho}^{(l)}  \right\|= \begin{cases}
		O\big(\frac{(\log \bar{\mu}_{\rho}^2)^{2 / 3}}{\bar{\mu}_{\rho}^4}+\frac{\lambda_\rho  ( \log \mu_{\rho} )^{2 / 3}}{\bar{\mu}_{\rho}^2}\big), & \text { if } N=6, \\ O\big(\frac{1}{\bar{\mu}_{\rho}^{(N+2) / 2}}+\frac{\lambda_\rho}{\bar{\mu}_{\rho}^2}\big), & \text { if } N>6,\end{cases}
\end{align}
where $\bar{\mu}_\rho:=\min \left\{\mu_{1, \rho}^{(1)}, \cdots, \mu_{k, \rho}^{(1)}, \mu_{1, \rho}^{(2)}, \cdots, \mu_{k, \rho}^{(2)}\right\}$.  

Proceeding  similar arguments as those in the proofs for Propositions 3.2 and 3.3 in \cite{CLP}, we can obtain the following two Lemmas. For brevity, we omit their proofs here.  

\begin{lemma} For $N \geq 6$, there holds
	\begin{align*}
		\left\|w_{\rho}^{(1)}-w_{\rho}^{(2)}\right\|=o(\frac{1}{\bar{\mu}_{\rho}^{(N+2) / 2}}) .
	\end{align*}
\end{lemma}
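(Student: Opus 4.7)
The strategy is to adapt the proof of Proposition~3.2 in \cite{CLP} to the mass-constrained setting, the only genuinely new difficulty being the control of the difference $\lambda_\rho^{(1)}-\lambda_\rho^{(2)}$ of the Lagrange multipliers. Set $V_\rho^{(l)}:=\sum_{j=1}^k PU_{x_{j,\rho}^{(l)},\mu_{j,\rho}^{(l)}}$, $\eta_\rho:=w_\rho^{(1)}-w_\rho^{(2)}$, and subtract the two equations $-\Delta u_\rho^{(l)}=(u_\rho^{(l)})^{2^*-1}+\lambda_\rho^{(l)}u_\rho^{(l)}$. After linearising the nonlinearity around $V_\rho^{(1)}$, we obtain
\begin{equation*}
-\Delta\bigl(u_\rho^{(1)}-u_\rho^{(2)}\bigr)-\lambda_\rho^{(1)}\bigl(u_\rho^{(1)}-u_\rho^{(2)}\bigr)-(2^*-1)(V_\rho^{(1)})^{2^*-2}\bigl(u_\rho^{(1)}-u_\rho^{(2)}\bigr)=\mathcal{R}_\rho+\bigl(\lambda_\rho^{(1)}-\lambda_\rho^{(2)}\bigr)u_\rho^{(2)},
\end{equation*}
where $\mathcal{R}_\rho$ groups the quadratic-in-$(u_\rho^{(1)}-u_\rho^{(2)})$ nonlinear remainder and the drift coming from $(V_\rho^{(1)})^{2^*-2}-(V_\rho^{(2)})^{2^*-2}$.

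\textbf{Main steps.} First I would decompose $u_\rho^{(1)}-u_\rho^{(2)}=(V_\rho^{(1)}-V_\rho^{(2)})+\eta_\rho$ and Taylor-expand $V_\rho^{(1)}-V_\rho^{(2)}$ along the kernel directions, writing it as $\sum_{j,i}(x^{(1)}_{j,i}-x^{(2)}_{j,i})\partial_{x_i}PU_{x_{j}^{(2)},\mu_j^{(2)}}+\sum_j(\mu_j^{(1)}-\mu_j^{(2)})\partial_\mu PU+\cdots$. Since $\eta_\rho$ lies (up to negligible orthogonal correction) in $\bigcap_j E_{x_{j,\rho}^{(1)},\mu_{j,\rho}^{(1)}}$, the linearised operator on the left-hand side is invertible on this subspace with the standard uniform bound. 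Testing against $\eta_\rho$ and bounding $\mathcal{R}_\rho$ term by term (using \eqref{equ:241114-e4} and the interpolation estimates for bubbles) yields
\begin{equation*}
\|\eta_\rho\|\;\lesssim\;\sum_{j,i}\bigl|x^{(1)}_{j,i}-x^{(2)}_{j,i}\bigr|\,\bar\mu_\rho^{-1}+\sum_j\bigl|\mu^{(1)}_{j,\rho}-\mu^{(2)}_{j,\rho}\bigr|\,\bar\mu_\rho^{-2}+\bigl|\lambda_\rho^{(1)}-\lambda_\rho^{(2)}\bigr|\,\bar\mu_\rho^{-1}+o\bigl(\bar\mu_\rho^{-(N+2)/2}\bigr).
\end{equation*}
Next, testing the difference equation against $\partial_{x_i}PU_{x_j^{(1)},\mu_j^{(1)}}$ and $\partial_\mu PU_{x_j^{(1)},\mu_j^{(1)}}$ (local Pohozaev identities, exactly as in \cite[\S3]{CLP}) produces a linear system whose leading part is the Hessian of $\Psi_k$ at $(a^k,\mu^k)$, which is invertible by the nondegeneracy hypothesis. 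Inverting this system gives $\sum_j|x_{j,\rho}^{(1)}-x_{j,\rho}^{(2)}|+\sum_j|\mu_{j,\rho}^{(1)}-\mu_{j,\rho}^{(2)}|/\bar\mu_\rho^2$ controlled by $\|\eta_\rho\|$, by $|\lambda_\rho^{(1)}-\lambda_\rho^{(2)}|$ and by higher order self-generated errors.

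\textbf{The main obstacle.} The new ingredient, absent in the fixed-frequency problem, is to show that $|\lambda_\rho^{(1)}-\lambda_\rho^{(2)}|$ is itself dominated by the remaining quantities. Here I would use the common mass constraint in the form
\begin{equation*}
0=\int_\Omega\bigl(u_\rho^{(1)}-u_\rho^{(2)}\bigr)\bigl(u_\rho^{(1)}+u_\rho^{(2)}\bigr)\,dx,
\end{equation*}
combined with testing the subtracted equation against a suitable normalised combination of $PU_{x_j^{(1)},\mu_j^{(1)}}$. Using the sharp computation $\int_\Omega PU_{x_j,\mu_j}^2\sim\mu_j^{-2}\int_{\mathbb{R}^N}U_{0,1}^2$ from \eqref{eq2.9} to capture the leading $\rho\,(\lambda_\rho^{(1)}-\lambda_\rho^{(2)})$ term, one obtains a bound of the form
\begin{equation*}
\bigl|\lambda_\rho^{(1)}-\lambda_\rho^{(2)}\bigr|\;\lesssim\;\bar\mu_\rho^{2}\Bigl(\sum_j\bigl|x_{j,\rho}^{(1)}-x_{j,\rho}^{(2)}\bigr|^2+\sum_j\bigl|\mu_{j,\rho}^{(1)}-\mu_{j,\rho}^{(2)}\bigr|\,\bar\mu_\rho^{-3}+\bar\mu_\rho^{-(N+2)/2}\|\eta_\rho\|\Bigr),
\end{equation*}
i.e.\ the multiplier drift is at least one power of $\bar\mu_\rho$ smaller than the other errors after scaling.

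\textbf{Closing.} Substituting these three mutually-coupled bounds into each other, the system becomes contractive for $\rho$ small, and the fixed-point solution satisfies $\|\eta_\rho\|=o(\bar\mu_\rho^{-(N+2)/2})$, which is the claim. The restriction $N\geq 6$ enters exactly as in \cite{CLP}: it guarantees that the position displacement $|x_{j,\rho}^{(1)}-x_{j,\rho}^{(2)}|$ is controlled by $\bar\mu_\rho^{-2}$ (rather than $\bar\mu_\rho^{-1}$), so the cross-terms involving $\partial_{x_i}PU$ are summable into the $o(\bar\mu_\rho^{-(N+2)/2})$ error.
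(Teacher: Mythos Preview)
Your proposal is essentially correct and aligns with the paper's own treatment: the paper omits the proof entirely, saying only that both this lemma and the next follow by ``proceeding similar arguments as those in the proofs for Propositions 3.2 and 3.3 in \cite{CLP}''. You have accurately identified the one genuinely new feature relative to \cite{CLP}---the drift $(\lambda_\rho^{(1)}-\lambda_\rho^{(2)})u_\rho^{(2)}$---and your plan to control it through the common mass constraint is the right one; indeed the paper itself later exploits exactly this mechanism (the computation culminating in \eqref{equ:241031-e3}) to estimate the multiplier difference. A few of the explicit powers in your intermediate inequalities (for instance the $\bar\mu_\rho^{-1}$ attached to $|\lambda_\rho^{(1)}-\lambda_\rho^{(2)}|$ in (A), which should be $\bar\mu_\rho^{-2}$ via $\|u_\rho^{(2)}\|_{H^{-1}}\sim\bar\mu_\rho^{-2}$) would need minor adjustment, but they do not affect the closure of the coupled system and the overall strategy is sound.
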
 


\begin{lemma}
	For $N \geq 6$ and $j=1, \cdots, k$, it holds 
	\begin{align}\label{equ:241015-e7}
		|x_{j,\rho }^{(1)}-x_{j, \rho }^{(2)}|=o\big(\frac{1}{\bar{\mu}_{\rho}^2}\big)
		\quad \text{and} \quad 
		|\mu_{j, \rho }^{(1)}-\mu_{j,\rho }^{(2)}|=o\big(\frac{1}{\bar{\mu}_{\rho}^2}\big).
	\end{align}
\end{lemma}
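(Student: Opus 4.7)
The plan is to argue by contradiction through a blow-up analysis applied to a suitably normalized difference of the two solutions, combined with local Pohozaev identities on balls around each concentration point. Specifically, suppose
\[
\tau_\rho := \sum_{j=1}^k \bar{\mu}_\rho^2\bigl(|x_{j,\rho}^{(1)}-x_{j,\rho}^{(2)}| + |\mu_{j,\rho}^{(1)}-\mu_{j,\rho}^{(2)}|/\mu_{j,\rho}^{(1)}\mu_{j,\rho}^{(2)}\bigr)
\]
does not tend to zero. After passing to a subsequence one may assume $\tau_\rho \geq c_0>0$, and then normalize the difference by setting $\eta_\rho := (u_\rho^{(1)}-u_\rho^{(2)})/\tau_\rho$ (or a variant adapted to the scaling of each spike). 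The function $\eta_\rho$ solves
\[
-\Delta \eta_\rho = c_\rho(x)\eta_\rho + \lambda_\rho^{(1)}\eta_\rho + \frac{\lambda_\rho^{(1)}-\lambda_\rho^{(2)}}{\tau_\rho}\,u_\rho^{(2)},
\]
where $c_\rho(x)=\bigl((u_\rho^{(1)})^{2^*-1}-(u_\rho^{(2)})^{2^*-1}\bigr)/(u_\rho^{(1)}-u_\rho^{(2)})$.

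Next I would perform the blow-up around each $a_j$ by setting $\tilde\eta_{j,\rho}(y) := \mu_{j,\rho}^{-(N-2)/2}\eta_\rho(x_{j,\rho}^{(1)}+y/\mu_{j,\rho}^{(1)})$ and show, using Lemma 3.1 to control the $w_\rho^{(l)}$ contributions together with the previous lemma's estimate on $\|w_\rho^{(1)}-w_\rho^{(2)}\|$, that $\tilde\eta_{j,\rho}$ converges in $C^1_{\mathrm{loc}}$ to a bounded solution of the linearized equation
\[
-\Delta \eta_j^* = (2^*-1)U_{0,1}^{2^*-2}\,\eta_j^*\qquad\text{in }\R^N.
\]
By the classification of the kernel of this operator, one has
\[
\eta_j^*(y) = b_j^0\frac{\partial U_{0,\lambda}}{\partial\lambda}\bigg|_{\lambda=1}(y) + \sum_{i=1}^N b_j^i\frac{\partial U_{0,1}}{\partial y_i}(y),
\]
and the task reduces to proving that every coefficient $b_j^0$ and $b_j^i$ vanishes, which will contradict the normalization $\tau_\rho\geq c_0>0$.

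To extract the coefficients I would apply two families of local Pohozaev identities on $B_{2\delta}(a_j)$: multiplying the equation satisfied by $\eta_\rho$ by $\partial_{x_i}(u_\rho^{(1)}+u_\rho^{(2)})$ and by $(x-x_{j,\rho}^{(1)})\cdot\nabla(u_\rho^{(1)}+u_\rho^{(2)})+\tfrac{N-2}{2}(u_\rho^{(1)}+u_\rho^{(2)})$. Inserting the ansatz and using the sharper error estimates of \eqref{equ:241114-e4} and \eqref{equ:241015-e7}, the boundary terms produce (after dividing by $\tau_\rho$ and letting $\rho\to 0$) a linear system in the $b_j^i$ whose matrix coincides with the Hessian of $\Psi_k$ at $(a^k,\mu^k)$. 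The nondegeneracy hypothesis then forces $b_j^i=0$ for $i=1,\dots,N$ and $b_j^0=0$ for $j=1,\dots,k$, which is the desired contradiction.

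The main obstacle, as the authors emphasize in the abstract and introduction, is the extra source term $(\lambda_\rho^{(1)}-\lambda_\rho^{(2)})u_\rho^{(2)}/\tau_\rho$ that has no analogue in the fixed-frequency problem of \cite{CLP}. To handle it I would exploit the mass constraint $\|u_\rho^{(1)}\|_{L^2}^2 = \|u_\rho^{(2)}\|_{L^2}^2 = \rho$, which yields the orthogonality condition $\int_\Omega(u_\rho^{(1)}+u_\rho^{(2)})\eta_\rho\,dx = 0$. Together with the explicit asymptotics $\rho^{1/2}\mu_{j,\rho}\to \text{const}\cdot\mu_j$ and $\rho^{(4-N)/2}\lambda_\rho\to\text{const}$ from Theorem~\ref{th-3}, this integral identity gives an independent relation that identifies the scale of $\lambda_\rho^{(1)}-\lambda_\rho^{(2)}$ relative to $\tau_\rho$ and shows that, after passing to the limit, the inhomogeneous term produces a multiple of $U_{0,1}$ orthogonal to the kernel elements in the Pohozaev pairings, thereby not contaminating the final linear system. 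This is the delicate point; once it is handled, the argument closes in the same way as in \cite{CLP}.
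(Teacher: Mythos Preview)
The paper itself omits the proof of this lemma, simply asserting that the arguments of Propositions~3.2--3.3 in \cite{CLP} for the fixed-$\lambda$ problem carry over. In \cite{CLP} the analogous estimate is not obtained via a blow-up of a normalized difference; rather, one applies the Pohozaev identities \eqref{equ:241105-e3}--\eqref{equ:241105-e4} to each solution $u_\rho^{(l)}$ separately, extracts for each $l$ algebraic relations of the type \eqref{equ:241106-e7}--\eqref{equ:241106-e8} locating $(x_{j,\rho}^{(l)},\mu_{j,\rho}^{(l)})$, subtracts the two systems, and then uses the nondegeneracy of $D^2\Psi_k(a^k,\mu^k)$ together with the bound $\|w_\rho^{(1)}-w_\rho^{(2)}\|=o(\bar\mu_\rho^{-(N+2)/2})$ from the preceding lemma to close. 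The scheme you describe is essentially the full machinery the paper deploys \emph{after} this lemma (through $\xi_\rho$, Lemmas~3.4--3.6 and Propositions~3.9--3.10) to prove Theorem~\ref{th-4}; it is much heavier than what the preliminary estimate requires, and in the paper's logical order the present lemma is an \emph{input} to that later analysis, not a consequence of it.

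There are also two concrete gaps in your sketch. First, you appeal to ``the sharper error estimates of \eqref{equ:241114-e4} and \eqref{equ:241015-e7}'' inside the argument, but \eqref{equ:241015-e7} is precisely the statement you are trying to prove, so this is circular; presumably you intended the $O(\bar\mu_\rho^{-2})$ bounds coming from the analogue of \eqref{equ:241014-e5}. Second, your treatment of the inhomogeneous term $(\lambda_\rho^{(1)}-\lambda_\rho^{(2)})u_\rho^{(2)}/\tau_\rho$ is exactly the point the paper flags as the new difficulty relative to \cite{CLP}, and its resolution is carried out only later: equation~\eqref{equ:241031-e3} expresses $\rho(\lambda_\rho^{(1)}-\lambda_\rho^{(2)})/\|u_\rho^{(1)}-u_\rho^{(2)}\|_{L^\infty}$ in terms of $\int U_{0,1}^{(N+2)/(N-2)}\xi_{\rho,j}$, and Lemma~3.6 shows this integral vanishes in the limit by testing against $x\cdot\nabla U_{0,1}+\tfrac{N-2}{2}U_{0,1}$. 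The mass-constraint orthogonality $\int_\Omega(u_\rho^{(1)}+u_\rho^{(2)})\eta_\rho=0$ that you invoke is indeed used in the paper, but only as one ingredient in deriving \eqref{equ:241031-e3}; by itself it does not bound $\lambda_\rho^{(1)}-\lambda_\rho^{(2)}$ relative to $\tau_\rho$, and your claim that the resulting source ``is orthogonal to the kernel elements in the Pohozaev pairings'' is asserted without the mechanism that actually makes it true.
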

	

\begin{lemma} $($see \cite[Lemma 3.4]{CLP}$)$
	For any constant $0<\sigma \leq N-2$, there is a constant $C>0$, such that
	\begin{align}\label{equ:241009-e1}
		\int_{\mathbb{R}^N} \frac{1}{|y-z|^{N-2}} \frac{1}{(1+|z|)^{2+\sigma}} d z \leq \begin{cases}C(1+|y|)^{-\sigma}, & \sigma<N-2, \\ C|\ln | y| |(1+|y|)^{-\sigma}, & \sigma=N-2 .\end{cases}
	\end{align} 
\end{lemma}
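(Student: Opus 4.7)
The plan is to estimate the integral $I(y):=\int_{\mathbb{R}^N}|y-z|^{-(N-2)}(1+|z|)^{-(2+\sigma)}\,dz$ by splitting $\mathbb{R}^N$ into several regions based on the relative sizes of $|y-z|$, $|z|$, and $|y|$, and bounding the integrand crudely on each piece. This is a purely analytic estimate and does not depend on the preceding spike-analysis material.

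For $|y|\leq 2$ the conclusion reduces to showing that $I(y)$ is uniformly bounded. This follows by separating a neighborhood of $z=y$, where $|y-z|^{-(N-2)}$ is locally integrable and $(1+|z|)^{-(2+\sigma)}$ is bounded, from the complement $\{|z|\geq 3\}$, where $|y-z|\geq |z|/2$, so the integrand is dominated by $C(1+|z|)^{-(N+\sigma)}$, a function integrable on $\mathbb{R}^N$ since $N+\sigma>N$.

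For $|y|\geq 2$, I would decompose $\mathbb{R}^N$ into three pieces. In region $A=\{|z|\leq |y|/2\}$, the triangle inequality gives $|y-z|\geq |y|/2$, so the contribution is at most $C|y|^{-(N-2)}\int_{|z|\leq |y|/2}(1+|z|)^{-(2+\sigma)}\,dz$. In region $B=\{|y-z|\leq |y|/2\}$, one has $|z|\geq |y|/2$, and the contribution is at most $C|y|^{-(2+\sigma)}\int_{|y-z|\leq |y|/2}|y-z|^{-(N-2)}\,dz\leq C|y|^{-(2+\sigma)}\cdot|y|^2=C|y|^{-\sigma}$. The complement $C=\mathbb{R}^N\setminus(A\cup B)$ I would split further: on the annular piece $\{|y|/2\leq |z|\leq 2|y|\}$, the crude bounds $(1+|z|)^{-(2+\sigma)}\leq C|y|^{-(2+\sigma)}$, $|y-z|^{-(N-2)}\leq C|y|^{-(N-2)}$, together with volume $C|y|^N$, yield $C|y|^{-\sigma}$; on the outer piece $\{|z|\geq 2|y|\}$, $|y-z|\geq |z|/2$ forces the integrand to be dominated by $C|z|^{-(N+\sigma)}$, whose integral over $\{|z|\geq 2|y|\}$ equals $C|y|^{-\sigma}$.

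The main, and essentially only, obstacle is the analysis of region $A$: a direct computation in polar coordinates shows that $\int_{|z|\leq |y|/2}(1+|z|)^{-(2+\sigma)}\,dz$ is comparable to $|y|^{N-2-\sigma}$ when $\sigma<N-2$, but to $\ln|y|$ when $\sigma=N-2$ (this being the borderline case where the integrand has the critical homogeneity $|z|^{-N}$). Multiplying by the prefactor $|y|^{-(N-2)}$ therefore produces $C|y|^{-\sigma}$ in the first case and $C|\ln|y||\cdot|y|^{-(N-2)}=C|\ln|y||(1+|y|)^{-\sigma}$ in the second, which is precisely the logarithmic correction appearing in the statement. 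Summing the contributions from all regions yields the claimed bound, and combining the estimates for $|y|\leq 2$ and $|y|\geq 2$ gives the uniform estimate in terms of $(1+|y|)^{-\sigma}$.
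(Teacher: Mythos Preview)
The paper does not supply its own proof of this lemma; it is simply quoted from \cite[Lemma~3.4]{CLP} and stated without argument. Your proposal provides a correct and complete proof via the standard region-splitting method (near the singularity, near the origin, and at infinity), and your identification of region $A$ as the source of the logarithmic correction when $\sigma=N-2$ is exactly right. Since there is no proof in the paper to compare against, nothing further needs to be said.
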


	Now we set
\begin{align}\label{equ:241009-e2}
		\xi_\rho(x)=\frac{u_\rho^{(1)}(x)-u_\rho^{(2)}(x)}{\big\|u_\rho^{(1)}-u_\rho^{(2)}\big\|_{L^{\infty}(\Omega)}},
\end{align}
then $\xi_\rho(x)$ satisfies $\left\|\xi_\rho\right\|_{L^{\infty}(\Omega)}=1$ and
	
\begin{align}\label{equ:241009-e3}
	-\Delta \xi_\rho(x)=\lambda_\rho^{(1)} \xi_\rho(x)+C_\rho(x) \xi_\rho(x)+g_\rho(x),
\end{align}
	where
\begin{align*}
		C_{\rho}(x):=\left(\frac{N+2}{N-2}\right) \int_0^1\left(t u_\rho^{(1)}(x)+(1-t) u_\rho^{(2)}(x)\right)^{\frac{4}{N-2}} d t 
\end{align*}
and 
\begin{align*}
		g_\rho(x):=\frac{\lambda_\rho^{(1)}-\lambda_\rho^{(2)}}{\| u_\rho^{(1)}- u_\rho^{(2)} \|_{L^{\infty}}} u_\rho^{(2)}(x).
	\end{align*}

Now, let $\xi_{\rho, j}(x)=\xi_\rho\big(\frac{x}{\mu_{j, \rho}^{(1)}}+x_{j, \rho}^{(1)}\big)$ for $j=1, \cdots, k$.   Since $\xi_{\rho, j}(x)$ is bounded, by the regularity theory in \cite{GT}, we find

$$
\xi_{\rho, j}(x) \in C^{1, \alpha}\left(B_r(0)\right) \text { and }\left\|\xi_{\rho, j}\right\|_{C^{1, \alpha}}\left(B_r(0)\right) \leq C
$$
for any fixed large $r$ and $\alpha \in(0,1)$ if $\rho$ and $\lambda_\rho$ is small, where the constants $r$ and $C$ are independent of $\rho$ and $j$. So we may assume that $\xi_{\rho, j}(x) \rightarrow \xi_j(x)$ in $C\left(B_r(0)\right)$. By direct calculations, we know
\begin{align}\label{equ:241015-e1}
	-\Delta \xi_{\rho, j}(x)&=-\frac{1}{(\mu_{j, \rho}^{(1)})^2} \Delta  \xi_\rho(\frac{x}{\mu_{j, \rho}^{(1)}}+x_{j, \rho}^{(1)})\\ \nonumber
	&=\frac{1}{(\mu_{j, \rho}^{(1)})^2} C_\rho(\frac{x}{\mu_{j, \rho}^{(1)}}+x_{j, \rho}^{(1)}) \xi_{\rho, j}(x)+\frac{ \lambda_\rho^{(1)} }{(\mu_{j, \rho}^{(1)})^2} \xi_{\rho, j}(x) +\frac{1}{(\mu_{j, \rho}^{(1)})^2}g_\rho(\frac{x}{\mu_{j, \rho}^{(1)}}+x_{j, \rho}^{(1)}).
\end{align}

\begin{lemma}\label{lm:241114-l1}
	For $N \geq 6$ and   any given $\Phi(x) \in C_0^{\infty}\left(\mathbb{R}^N\right)$ with $\operatorname{supp} \Phi(x) \subset B_{\mu_{j, \rho}^{(1)}d}\big(x_{j, \rho}^{(1)}\big)$, it holds
	\begin{equation}\label{equ:241114-e2}
		\begin{aligned}
			\frac{1}{(\mu_{j, \rho}^{(1)})^2} \int_{B_{\mu_{j, \rho}(1)}(x_{j, \rho}^{(1)})} & C_{\rho}(\frac{x}{\mu_{j, \rho}^{(1)}}+x_{j, \rho}^{(1)}) \xi_{\rho, j}(x) \Phi(x) d x \\
			& =(\frac{N+2}{N-2}) \int_{\mathbb{R}^N} U_{0,1}^{\frac{4}{N-2}}(x) \xi_{\rho, j}(x) \Phi(x) d x+o(\frac{1}{\bar{\mu}_{\rho}}),
		\end{aligned}
	\end{equation}
	and
	\begin{align}\label{equ:241114-e3}
		&	\frac{1}{(\mu_{j, \rho}^{(1)})^2}\int_{B_{\mu_{j, \rho}^{(1)} d}(x_{j, \rho}^{(1)})}g_\rho(\frac{x}{\mu_{j, \rho}^{(1)}}+x_{j, \rho}^{(1)})\Phi(x) dx \\ \nonumber
		&=\frac{(\mu_{j, \rho}^{(1)})^{(N-2)/2}}{(\mu_{j, \rho}^{(1)})^2\rho} \int_{B_{\mu_{j, \rho}^{(1)} d}(x_{j, \rho}^{(1)})}  U_{0, 1}(x)\Phi(x)dz \cdot \Big(-\frac{4}{N-2} \sum_{j=1}^k 	\frac{1}{(\mu_{j, \rho}^{(1)})^{\frac{N-2}{2}}}\int_{\mathbb{R}^N} \big [    U_{0, 1}(x)  \big]^{\frac{N+2}{N-2}}    \xi_{\rho, j}(x)   dx  \Big) +o(\frac{1}{\bar{\mu}_{\rho}}).
	\end{align}

\end{lemma}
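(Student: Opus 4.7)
The plan is to establish \eqref{equ:241114-e2} by a direct rescaling of the cubic-average coefficient $C_\rho$ near the $j$-th spike, and to establish \eqref{equ:241114-e3} by first extracting a closed-form expression for the ratio
\[
\Lambda_\rho := \frac{\lambda_\rho^{(1)}-\lambda_\rho^{(2)}}{\|u_\rho^{(1)}-u_\rho^{(2)}\|_{L^\infty}}
\]
from the linearized equation \eqref{equ:241009-e3}, and then rescaling $u_\rho^{(2)}$ near the $j$-th spike. The closed-form for $\Lambda_\rho$ is the novel ingredient in the mass-constrained setting; it replaces the identity $\lambda_\rho^{(1)}=\lambda_\rho^{(2)}$ that holds trivially in the fixed-frequency case.

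For \eqref{equ:241114-e2}, I would use the decomposition $u_\rho^{(l)}=\sum_i PU_{x_{i,\rho}^{(l)},\mu_{i,\rho}^{(l)}}+w_\rho^{(l)}$ together with the closeness estimates \eqref{equ:241015-e7} and the remainder bound \eqref{equ:241114-e4} to show that, after the change of variable $x\mapsto x/\mu_{j,\rho}^{(1)}+x_{j,\rho}^{(1)}$ on $B_{\mu_{j,\rho}^{(1)} d}(x_{j,\rho}^{(1)})$,
\[
\frac{1}{(\mu_{j,\rho}^{(1)})^2}\,C_\rho\!\Big(\tfrac{x}{\mu_{j,\rho}^{(1)}}+x_{j,\rho}^{(1)}\Big)=\frac{N+2}{N-2}\,U_{0,1}^{4/(N-2)}(x)+o\!\Big(\tfrac{1}{\bar\mu_\rho}\Big)
\]
uniformly on compact subsets; the contribution of spikes $i\neq j$ decays by the Green-function-type bound \eqref{equ:241009-e1} and that of $w_\rho^{(l)}$ by \eqref{equ:241114-e4}. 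Multiplying by the bounded factors $\xi_{\rho,j}\Phi$ and integrating over the support of $\Phi$ then yields \eqref{equ:241114-e2}.

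For \eqref{equ:241114-e3}, the key step is to compute $\Lambda_\rho$. I would test \eqref{equ:241009-e3} against $u_\rho^{(1)}$; integration by parts and the PDE satisfied by $u_\rho^{(1)}$ cancel the $\lambda_\rho^{(1)}$ terms and yield
\[
\Lambda_\rho \int_\Omega u_\rho^{(1)} u_\rho^{(2)}\,dx=\int_\Omega\!\big[(u_\rho^{(1)})^{2^*-1}-C_\rho\, u_\rho^{(1)}\big]\xi_\rho\,dx.
\]
A Taylor expansion of $C_\rho$ around $u_\rho^{(1)}\equiv u_\rho^{(2)}$ gives $C_\rho u_\rho^{(1)}=\tfrac{N+2}{N-2}(u_\rho^{(1)})^{2^*-1}+(\text{small})$, so the right-hand side equals $-\tfrac{4}{N-2}\int_\Omega (u_\rho^{(1)})^{2^*-1}\xi_\rho\,dx$ up to lower order. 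Mass conservation together with $\|u_\rho^{(1)}-u_\rho^{(2)}\|_{L^2}\to 0$ give $\int u_\rho^{(1)} u_\rho^{(2)}=\rho(1+o(1))$, and rescaling $(u_\rho^{(1)})^{2^*-1}(y)\approx \mu_{i,\rho}^{(1)(N+2)/2}U_{0,1}^{(N+2)/(N-2)}(\mu_{i,\rho}^{(1)}(y-x_{i,\rho}^{(1)}))$ around each spike produces
\[
\Lambda_\rho=-\frac{4}{(N-2)\rho}\sum_{i=1}^{k}(\mu_{i,\rho}^{(1)})^{-(N-2)/2}\int_{\mathbb{R}^N}U_{0,1}^{(N+2)/(N-2)}\,\xi_{\rho,i}\,dx+o(\cdot).
\]
Substituting into $\tfrac{1}{(\mu_{j,\rho}^{(1)})^2}\int g_\rho(\cdot)\Phi$ and using $u_\rho^{(2)}(x/\mu_{j,\rho}^{(1)}+x_{j,\rho}^{(1)})=\mu_{j,\rho}^{(1)(N-2)/2}U_{0,1}(x)(1+o(1))$ on the support of $\Phi$ delivers \eqref{equ:241114-e3}.

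The hard part will be the error bookkeeping in \eqref{equ:241114-e3}: since the leading term on the right of \eqref{equ:241114-e3} is of order one and the target error is $o(1/\bar\mu_\rho)$, each approximation above (Taylor expansion of $C_\rho$, replacement of $\int u_\rho^{(1)}u_\rho^{(2)}$ by $\rho$, rescaling of $u_\rho^{(2)}$, and neglect of the spikes $i\neq j$) must carry a relative error of order $o(1/\bar\mu_\rho)$. This is delicate but feasible thanks to the sharp closeness \eqref{equ:241015-e7} of the two concentration profiles and the refined remainder estimate \eqref{equ:241114-e4}, which must be propagated consistently through each of these steps.
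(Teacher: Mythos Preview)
Your plan is correct and matches the paper's proof in overall architecture: first handle $C_\rho$ by the spike decomposition and rescaling, then extract a formula for $\Lambda_\rho=(\lambda_\rho^{(1)}-\lambda_\rho^{(2)})/\|u_\rho^{(1)}-u_\rho^{(2)}\|_{L^\infty}$, and finally rescale $u_\rho^{(2)}$ inside $g_\rho$.

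The only noteworthy difference is in how the $\Lambda_\rho$ formula is derived. The paper subtracts the two energy identities $\int|\nabla u_\rho^{(l)}|^2=\lambda_\rho^{(l)}\rho+\int(u_\rho^{(l)})^{2^*}$ and then uses the exact cancellations $\int(u_\rho^{(l)})^2=\rho$ and $\int(u_\rho^{(1)}+u_\rho^{(2)})\xi_\rho=0$ coming from the mass constraint; after an algebraic expansion of $(u^{(1)})^{2^*}-(u^{(2)})^{2^*}$ and a mean-value step it obtains
\[
\rho\Lambda_\rho=(\lambda_\rho^{(1)}-\lambda_\rho^{(2)})\!\int u_\rho^{(1)}\xi_\rho-\tfrac{4}{N-2}\!\int[u_\rho^{(2)}+\theta(u_\rho^{(1)}-u_\rho^{(2)})]^{\frac{6-N}{N-2}}u_\rho^{(1)}u_\rho^{(2)}\xi_\rho,
\]
and estimates the first term crudely by $|\lambda_\rho^{(1)}-\lambda_\rho^{(2)}|\le O(\lambda_\rho^{(1)})$. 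Your route (test the linearized equation against $u_\rho^{(1)}$) yields instead $\Lambda_\rho\int u_\rho^{(1)}u_\rho^{(2)}$ on the left and avoids that extra term, at the cost of approximating $\int u_\rho^{(1)}u_\rho^{(2)}\approx\rho$. The two computations are in fact algebraically equivalent: since $\|u^{(1)}-u^{(2)}\|_{L^\infty}\!\int u^{(1)}\xi_\rho=\rho-\int u^{(1)}u^{(2)}$, the paper's extra term exactly accounts for the difference between $\rho$ and $\int u^{(1)}u^{(2)}$. Either bookkeeping gives the required $o(1/\bar\mu_\rho)$ error once the closeness \eqref{equ:241015-e7} and the remainder bound \eqref{equ:241114-e4} are fed in.
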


\begin{proof}

Now, we estimate $C_\rho(\frac{x}{\mu_{j, \rho}^{(1)}}+x_{j, \rho}^{(1)}) $. Similar to \eqref{equ:241014-e5}, if $N \geq 6$, we have
\begin{equation*}
		\big|x_{j,\rho}-a_j\big|=O(\frac{1}{\bar{\mu}_{\rho}^2}) \quad  \text{and} \quad 
         \big |\mu_j-(\lambda_\rho^{\frac{1}{N-4}} \mu_{j,\rho})^{-1}\big|= 
			O(\frac{1}{\bar{\mu}_{\rho}^2}).
	\end{equation*}.
Then, we notice that 
\begin{equation}\label{equ:241015-e6}
	\begin{aligned}
		& U_{x_{j, \rho}^{(1)}, \mu_{j, \rho}^{(1)}}(x)-U_{x_{j, \rho}^{(2)}, \mu_{j, \rho}^{(2)}}(x) \\
		& =O\left(|x_{j, \rho}^{(1)}-x_{j, \rho}^{(2)}| \cdot(\nabla_y U_{y, \mu_{j, \rho}^{(1)}}(x)\big|_{y=x_{j, \rho}^{(1)}})+|\mu_{j, \rho}^{(1)}-\mu_{j, \rho}^{(2)}| \cdot(\nabla_\mu U_{x_{j, \rho}^{(1)}, \mu}(x)\big|_{\mu=\mu_{j, \rho}^{(1)}})\right) \\
		& =O\left(\mu_{j, \rho}^{(1)}|x_{j, \rho}^{(1)}-x_{j, \rho}^{(2)}|+(\mu_{j, \rho}^{(1)})^{-1}|\mu_{j, \rho}^{(1)}-\mu_{j, \rho}^{(2)}|\right) U_{x_{j, \rho}^{(1)}, \mu_{j, \rho}^{(1)}}(x)=o(\frac{1}{\bar{\mu}_{\rho}}) U_{x_{j, \rho}^{(1)}, \mu_{j, \rho}^{(1)}}(x).
	\end{aligned}
\end{equation}

which means
\begin{align}\label{equ:241031-e2}
	u_{\rho}^{(1)}(x)-u_{\rho}^{(2)}(x)=o\big(\frac{1}{\bar{\mu}_{\rho}}\big)\big(\sum_{j=1}^k U_{x_{j, \rho}^{(1)}, \mu_{j, \rho}^{(1)}}(x)\big)+O\big(\sum_{l=1}^2|w_{\rho}^{(l)}(x)|\big).
\end{align}

Similar to \eqref{equ:241007-e1} and \eqref{equ:241114-e5},  we also have
\begin{align}\label{equ:250305-e1}
		P U_{x^{(l)}_{j,\rho}, \mu^{(l)}_{j,\rho}}(x)=O\big(\frac{1}{(\mu^{(l)}_{j,\rho})^{(N-2) / 2}}\big) \text { in } C^1\big(\Omega \backslash B_d(x^{(l)}_{j,\rho})\big) \quad \text{and}  \quad  \varphi_{x^{(l)}_{j,\rho}, \mu^{(l)}_{j,\rho}}(x)=O\big(\frac{1}{(\mu^{(l)}_{j,\rho})^{(N-2) / 2}}\big) \text { in }C^1(\Omega).
 	\end{align}
Then for a small fixed $d$ and $x \in B_d(x_{j, \rho}^{(1)})$, by \eqref{equ:250305-e1}, we find
\begin{align}\label{equ:241106-e4}
	C_{\rho}(x)=\big(\frac{N+2}{N-2}+o(\frac{1}{\bar{\mu}_{\rho}})\big) U_{x_{j, \rho}^{(1)}, \mu_{j, \rho}^{(1)}}^{\frac{4}{N-2}}(x)+O\big(\frac{1}{\bar{\mu}_{\rho}^2}\big)+O\big(\sum_{l=1}^2\left|w_{\rho}^{(l)}(x)\right|^{\frac{4}{N-2}}\big).
\end{align}

Next, for any given $\Phi(x) \in C_0^{\infty}\left(\mathbb{R}^N\right)$ with $\operatorname{supp} \Phi(x) \subset B_{\mu_{j, \rho}^{(1)}d}\big(x_{j, \rho}^{(1)}\big)$, we have

\begin{equation}\label{equ:241015-e2}
\begin{aligned}
	\frac{1}{(\mu_{j, \rho}^{(1)})^2} \int_{B_{\mu_{j, \rho}(1)}(x_{j, \rho}^{(1)})} & C_{\rho}(\frac{x}{\mu_{j, \rho}^{(1)}}+x_{j, \rho}^{(1)}) \xi_{\rho, j}(x) \Phi(x) d x \\
	& =(\frac{N+2}{N-2}) \int_{\mathbb{R}^N} U_{0,1}^{\frac{4}{N-2}}(x) \xi_{\rho, j}(x) \Phi(x) d x+o(\frac{1}{\bar{\mu}_{\rho}}).
\end{aligned}
\end{equation}
Thus, we obtain \eqref{equ:241114-e2}.
Also from the fact that $\left\|\xi_{\rho}\right\|_{L^{\infty}(\Omega)}=1$, we know
\begin{align}\label{equ:241015-e3}
	\frac{\lambda_\rho^{(1)}}{(\mu_{j, \rho}^{(1)})^2} \int_{B_{\mu_{j, \rho}^{(1)} d}(x_{j, \rho}^{(1)})} \xi_{\rho, j}(x) \Phi(x) d x=o(\frac{1}{\bar{\mu}_\rho}).
\end{align}

Next, we estimate $g_\rho(\frac{x}{\mu_{j, \rho}^{(1)}}+x_{j,\lambda}^{(1)})$. 
From \eqref{equ:Main-Problem}, we find that
\begin{align*}
	\int_{\Omega} | \nabla u_\rho^{(1)} |^2dx-\lambda_\rho^{(1)} \int_{\Omega} (u_\rho^{(1)})^2dx=\int_{\Omega} (u_\rho^{(1)})^{\frac{2N}{N-2}} dx, 
\end{align*}
which implies that 
\begin{align*}
	\frac{ \rho (\lambda_\rho^{(1)}-\lambda_\rho^{(2)})}{\| u_\rho^{(1)}- u_\rho^{(2)} \|_{L^{\infty}}}= \int_{\Omega}  \nabla ( u_\rho^{(1)} + u_\rho^{(2)} ) \nabla \xi_\rho dx + \int_{\Omega} 	\frac{  (u_\rho^{(2)})^{\frac{2N}{N-2}}-(u_\rho^{(1)})^{\frac{2N}{N-2}} }{\| u_\rho^{(1)}- u_\rho^{(2)} \|_{L^{\infty}}}dx.
\end{align*}

Since 
\begin{equation*}
	\int_{\Omega}\left(u_{\rho}^{(1)}+u_{\rho}^{(2)}\right) \xi_{\rho}dx=\frac{1}{\|u_{\rho}^{(1)}-u_{\rho}^{(2)}\|_{L^{\infty}\left(\mathbb{R}^N\right)}}\big[\int_{\Omega}\left(u_{\rho}^{(1)}\right)^2dx-\int_{\mathbb{R}^N}\left(u_{\rho}^{(2)}\right)^2dx\big]=0
\end{equation*}
and 
\begin{equation*}
	\begin{aligned}
			& \left(u_{\rho}^{(1)}\right)^{\frac{2N}{N-2}}-\left(u_{\rho}^{(2)}\right)^{\frac{2N}{N-2}} \\
			= & \left(u_{\rho}^{(1)}\right)^{\frac{2N}{N-2}}-\left(u_{\rho}^{(1)}\right)^{\frac{2N}{N-2}-1} u_{\rho}^{(2)}+\left(u_{\rho}^{(1)}\right)^{\frac{2N}{N-2}-1} u_{\rho}^{(2)}-\left[\left(u_{\rho}^{(2)}\right)^{\frac{2N}{N-2}}-\left(u_{\rho}^{(2)}\right)^{\frac{2N}{N-2}-1} u_{\rho}^{(1)}+\left(u_{\rho}^{(2)}\right)^{\frac{2N}{N-2}-1} u_{\rho}^{(1)}\right] \\
			= & \left(u_{\rho}^{(1)}\right)^{\frac{2N}{N-2}-1}\left(u_{\rho}^{(1)}-u_{\rho}^{(2)}\right)+\left(u_{\rho}^{(2)}\right)^{\frac{2N}{N-2}-1}\left(u_{\rho}^{(1)}-u_{\rho}^{(2)}\right)+u_{\rho}^{(1)} u_{\rho}^{(2)}\left[\left(u_{\rho}^{(1)}\right)^{\frac{2N}{N-2}-2}-\left(u_{\rho}^{(2)}\right)^{\frac{2N}{N-2}-2}\right].
		\end{aligned}
\end{equation*}

Then we can deduce that 
\begin{align*}
	\frac{ \rho (\lambda_\rho^{(1)}-\lambda_\rho^{(2)}  )}{\| u_\rho^{(1)}- u_\rho^{(2)} \|_{L^{\infty}}}=&-\lambda_\rho^{(2)}	\int_{\mathbb{R}^N}\left(u_{\rho}^{(1)}+u_{\rho}^{(2)}\right) \xi_{\rho}dx+ \int_{\Omega}  \nabla ( u_\rho^{(1)} + u_\rho^{(2)} ) \nabla \xi_\rho dx\\ \nonumber
	&-\int_{\Omega}\left[ (u_\rho^{(1)} )^{\frac{N+2}{N-2}}\xi_{\rho}+ (u_\rho^{(2)} )^{\frac{N+2}{N-2}}\xi_{\rho}+  	\frac{ u_\rho^{(1)}u_\rho^{(2)} \Big((u_\rho^{(1)})^{\frac{4}{N-2}}-(u_\rho^{(2)})^{\frac{4}{N-2}}  \Big)}{\| u_\rho^{(1)}- u_\rho^{(2)} \|_{L^{\infty}}} \right] dx\\ \nonumber
	&= \int_{\Omega}( \lambda_\rho^{(1)}-\lambda_\rho^{(2)}   )u_\rho^{(1)}  \xi_{\rho} dx+ \frac{1}{\| u_\rho^{(1)}- u_\rho^{(2)} \|_{L^{\infty}}}  \int_{\Omega}   \Big(  u_\rho^{(1)} (u_\rho^{(2)})^{\frac{N+2}{N-2}}-  u_\rho^{(2)} (u_\rho^{(1)})^{\frac{N+2}{N-2}}     \Big)  dx\\ \nonumber
	&= \int_{\Omega}( \lambda_\rho^{(1)}-\lambda_\rho^{(2)}   )u_\rho^{(1)} \xi_{\rho}dx-\frac{4}{N-2}  \int_{\Omega} [u_\rho^{(2)}+\theta (u_\rho^{(1)}- u_\rho^{(2)}) ]^{(\frac{2N}{N-2}-3) }    u_\rho^{(1)} u_\rho^{(2)} \xi_{\rho}   dx.
\end{align*}

When $N\geq 7$, it follows from  \eqref{equ:241015-e6}  and  \eqref{equ:241031-e2}  that 
\begin{align}\label{equ:241031-e4}
	\int_{\Omega} [u_\rho^{(2)}+\theta (u_\rho^{(1)}- u_\rho^{(2)}) ]^{(\frac{2N}{N-2}-3) }    u_\rho^{(1)} u_\rho^{(2)} \xi_{\rho}   dx   &=  
	\int_{\Omega} \big [   \sum_{j=1}^k U_{x_{j, \rho}^{(1)}, \mu_{j, \rho}^{(1)}}(x)  \big]^{\frac{N+2}{N-2}}    \xi_{\rho}   dx  + o( \frac{1}{\bar{\mu}_\rho}   ) 
\end{align}

When $N= 6$,  we have $\frac{2N}{N-2}=3$.  Similarly, we notice that 
\begin{align*}
&-\frac{4}{N-2} \int_{\Omega} [u_\rho^{(2)}+\theta (u_\rho^{(1)}- u_\rho^{(2)}) ]^{(\frac{2N}{N-2}-3) }    u_\rho^{(1)} u_\rho^{(2)} \xi_{\rho}   dx =	  -\frac{4}{N-2}	\int_{\Omega}     u_\rho^{(1)} u_\rho^{(2)} \xi_{\rho}   dx\\ \nonumber
&= 	\int_{\Omega} \big [   \sum_{j=1}^k U_{x_{j, \rho}^{(1)}, \mu_{j, \rho}^{(1)}}(x)  \big]^{2}    \xi_{\rho}   dx  + o( \frac{1}{\bar{\mu}_\rho}   ). 
\end{align*}

Besides,  we also notice 
\begin{align}\label{equ:241031-e5}
	\int_{\Omega}( \lambda_\rho^{(1)}-\lambda_\rho^{(2)}   )u_\rho^{(1)} dx=O\big( \lambda_\rho^{(1)}	\int_{\Omega} u_\rho^{(1)} dx \big)=O\big( \lambda_\rho^{(1)}	\int_{\Omega}(\sum_{j=1}^{k} PU_{x_{j, \rho}^{(1)}, \mu_{j, \rho}^{(1)}}(x) +\omega^{(1)}_\rho)  dx \big) = O(\frac{1}{\bar{\mu}_\rho^{(\frac{3N}{2}-5)}}).
\end{align}

Consequently, combing \eqref{equ:241031-e4} and \eqref{equ:241031-e5}, we see that 
\begin{align}\label{equ:241031-e3}
	\frac{ \rho (\lambda_\rho^{(1)}-\lambda_\rho^{(2)}  )}{\| u_\rho^{(1)}- u_\rho^{(2)} \|_{L^{\infty}}}&= -\frac{4}{N-2} \int_{\Omega} \big [   \sum_{j=1}^k U_{x_{j, \rho}^{(1)}, \mu_{j, \rho}^{(1)}}(x)  \big]^{\frac{N+2}{N-2}}    \xi_{\rho}   dx  + O(\frac{1}{\bar{\mu}_\rho^{(\frac{3N}{2}-5)}})\\ \nonumber
	& = -\frac{4}{N-2} \sum_{j=1}^k 	\frac{1}{(\mu_{j, \rho}^{(1)})^{\frac{N-2}{2}}}  \int_{\mathbb{R}^N} \big [    U_{0, 1}(x)  \big]^{\frac{N+2}{N-2}}    \xi_{\rho, j}(x)   dx +O(\frac{1}{\bar{\mu}_\rho^{(\frac{3N}{2}-5)}}).
\end{align}



By  \eqref{equ:241015-e6} and \eqref{equ:241031-e3}, for a small fixed $d$ and $x \in B_d(x_{j, \rho}^{(1)})$, we have

\begin{align*}
	&\frac{1}{(\mu_{j, \rho}^{(1)})^2}\int_{B_{\mu_{j, \rho}^{(1)} d}(x_{j, \rho}^{(1)})}g_\rho(\frac{x}{\mu_{j, \rho}^{(1)}}+x_{j, \rho}^{(1)})\Phi(x) dx 	\\ \nonumber
	&=\frac{1}{(\mu_{j, \rho}^{(1)})^2\rho}\int_{B_{\mu_{j, \rho}^{(1)} d}(x_{j, \rho}^{(1)})} \frac{ \rho (\lambda_\rho^{(1)}-\lambda_\rho^{(2)}  )}{\| u_\rho^{(1)}- u_\rho^{(2)} \|_{L^{\infty}}}   u_\rho^{(2)}(\frac{x}{\mu_{j, \rho}^{(1)}}+x_{j, \rho}^{(1)})\Phi(x)dx	\\ \nonumber
	& =  \frac{1}{(\mu_{j, \rho}^{(1)})^2\rho}  \Big(\int_{B_{\mu_{j, \rho}^{(1)} d}(x_{j, \rho}^{(1)})} \big(\sum_{j=1}^k U_{x_{j, \rho}^{(2)}, \mu_{j, \rho}^{(2)}}(\frac{x}{\mu_{j, \rho}^{(1)}}+x_{j, \rho}^{(1)})\big)\Phi(x)dx  + \int_{B_{\mu_{j, \rho}^{(1)} d}(x_{j, \rho}^{(1)})}  w_{\rho}^{(2)}(\frac{x}{\mu_{j, \rho}^{(1)}}+x_{j, \rho}^{(1)})\Phi(x)dx  \Big) \\ \nonumber
	&\cdot \Big( -\frac{4}{N-2} \sum_{j=1}^k 	\frac{1}{(\mu_{j, \rho}^{(1)})^{\frac{N-2}{2}}}\int_{\mathbb{R}^N} \big [    U_{0, 1}(x)  \big]^{\frac{N+2}{N-2}}    \xi_{\rho, j}(x)   dx   + O(\frac{1}{\bar{\mu}_\rho^{(\frac{3N}{2}-5)}}) \Big). 
	\end{align*}

By  \eqref{equ:241015-e7} and  \eqref{equ:241015-e6}, we have 
\begin{align} \label{equ:241015-e9}
	\int_{B_{\mu_{j, \rho}^{(1)} d}(x_{j, \rho}^{(1)})}    U_{x_{j, \rho}^{(2)}, \mu_{j, \rho}^{(2)}}(\frac{x}{\mu_{j, \rho}^{(1)}}+x_{j, \rho}^{(1)})\Phi(x)dx &= \int_{B_{\mu_{j, \rho}^{(1)} d}(x_{j, \rho}^{(1)})}   \big(1+o(\frac{1}{\bar{\mu}_{\rho}})\big) U_{x_{j, \rho}^{(1)}, \mu_{j, \rho}^{(1)}}(\frac{x}{\mu_{j, \rho}^{(1)}}+x_{j, \rho}^{(1)})\Phi(x)dx\\ \nonumber
	&=    (\mu_{j, \rho}^{(1)})^{(N-2)/2} \int_{B_{\mu_{j, \rho}^{(1)} d}(x_{j, \rho}^{(1)})}    U_{0, 1}(x)\Phi(x)dz +o(\frac{1}{\bar{\mu}_{\rho}}).
\end{align}

Also from \eqref{equ:241114-e4},  we calculate that 
\begin{align}\label{equ:241015-e8}
	&\frac{1}{(\mu_{j, \rho}^{(1)})^{N-2}}	\int_{B_{\mu_{j, \rho}^{(1)} d}(x_{j, \rho}^{(1)})}  w_{\rho}^{(2)}(\frac{x}{\mu_{j, \rho}^{(1)}}+x_{j, \rho}^{(1)})\Phi(x)dx=o(\frac{1}{\bar{\mu}_\rho}).
\end{align}

It follows from  \eqref{equ:241015-e9} and \eqref{equ:241015-e8} that
\begin{align}\label{equ:241015-e10}
	&	\frac{1}{(\mu_{j, \rho}^{(1)})^2}\int_{B_{\mu_{j, \rho}^{(1)} d}(x_{j, \rho}^{(1)})}g_\rho(\frac{x}{\mu_{j, \rho}^{(1)}}+x_{j, \rho}^{(1)})\Phi(x) dx \\ \nonumber
	& =  \frac{1}{(\mu_{j, \rho}^{(1)})^2\rho}  \Big(\int_{B_{\mu_{j, \rho}^{(1)} d}(x_{j, \rho}^{(1)})} \big(\sum_{j=1}^k U_{x_{j, \rho}^{(2)}, \mu_{j, \rho}^{(2)}}(\frac{x}{\mu_{j, \rho}^{(1)}}+x_{j, \rho}^{(1)})\big)\Phi(x)dx  + \int_{B_{\mu_{j, \rho}^{(1)} d}(x_{j, \rho}^{(1)})}  w_{\rho}^{(2)}(\frac{x}{\mu_{j, \rho}^{(1)}}+x_{j, \rho}^{(1)})\Phi(x)dx  \Big) \\ \nonumber
	&\cdot \Big( -\frac{4}{N-2} \int_{\Omega} \big [   \sum_{j=1}^k U_{x_{j, \rho}^{(1)}, \mu_{j, \rho}^{(1)}}(x)  \big]^{\frac{N+2}{N-2}}    \xi_{\rho}   dx  + O(\frac{1}{\bar{\mu}_\rho^{(\frac{3N}{2}-5)}}) \Big) 	\\ \nonumber
	&=  (\mu_{j, \rho}^{(1)})^{(N-2)/2} \int_{B_{\mu_{j, \rho}^{(1)} d}(x_{j, \rho}^{(1)})}   U_{0, 1}(x)\Phi(x)dz \cdot \frac{1}{(\mu_{j, \rho}^{(1)})^2\rho} \Big( -\frac{4}{N-2} \int_{\Omega} \big [   \sum_{j=1}^k U_{x_{j, \rho}^{(1)}, \mu_{j, \rho}^{(1)}}(x)  \big]^{\frac{N+2}{N-2}}    \xi_{\rho}   dx      \Big) +o(\frac{1}{\bar{\mu}_\rho}) \\ \nonumber
		&=\frac{(\mu_{j, \rho}^{(1)})^{(N-2)/2}}{(\mu_{j, \rho}^{(1)})^2\rho} \int_{B_{\mu_{j, \rho}^{(1)} d}(x_{j, \rho}^{(1)})}  U_{0, 1}(x)\Phi(x)dz \cdot \Big(-\frac{4}{N-2} \sum_{j=1}^k 	\frac{1}{(\mu_{j, \rho}^{(1)})^{\frac{N-2}{2}}}\int_{\mathbb{R}^N} \big [    U_{0, 1}(x)  \big]^{\frac{N+2}{N-2}}    \xi_{\rho, j}(x)   dx  \Big) +o(\frac{1}{\bar{\mu}_{\rho}}).
\end{align}	
	
	Thus, we obtain \eqref{equ:241114-e3}.
\end{proof}

	Then from Lemma \ref{lm:241114-l1}, we have the following result:
	\begin{lemma}\label{lm:241114-l2}
		For  $N \geq 6$ and $|\xi_{\rho, j}(x)|\leq 1$, we suppose that   $\xi_{\rho, j}(x) \rightarrow \xi_j(x)$ in $C\left(B_r(0)\right)$. Then  $\xi_j(x)$  satisfies following system:
		\begin{equation*}
			-\Delta \xi_j(x)=\frac{N+2}{N-2} U_{0,1}^{\frac{4}{N-2}}(x) \xi_j(x) -\frac{4 C_*}{N-2}U_{0,1}\int_{\mathbb{R}^N}  U_{0,1}^{\frac{N+2}{N-2}}(x) \xi_j(x)dx ~~~ \text { in } \mathbb{R}^N, \quad \text{for}~~ j=1,...,N,
		\end{equation*}
		where $ C_*  $ is a positive constant. 
	\end{lemma}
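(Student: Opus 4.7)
The plan is to test the rescaled equation~\eqref{equ:241015-e1} against an arbitrary $\Phi\in C_0^\infty(\mathbb{R}^N)$ and pass to the limit $\rho\to 0^+$, using Lemma~\ref{lm:241114-l1} to control the right-hand side and the locally uniform convergence $\xi_{\rho,j}\to\xi_j$ established just before the statement to control the left-hand side. Since $\operatorname{supp}\Phi$ is eventually contained in the ball $B_{\mu_{j,\rho}^{(1)}d}$ used in Lemma~\ref{lm:241114-l1}, integration by parts yields
$$\int_{\mathbb{R}^N}\xi_{\rho,j}(-\Delta\Phi)\,dx=\mathrm{(I)}+\mathrm{(II)}+\mathrm{(III)},$$
where (I), (II), (III) correspond to the three right-hand summands of~\eqref{equ:241015-e1} tested against $\Phi$. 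The left-hand side converges to $\int \xi_j(-\Delta\Phi)\,dx$ by uniform convergence on $\operatorname{supp}\Phi$, which identifies $-\Delta\xi_j$ in the distributional sense.

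I would then dispatch the three pieces separately. Estimate~\eqref{equ:241114-e2} combined with dominated convergence (applicable since $U_{0,1}^{4/(N-2)}\in L^\infty\cap L^{N/2}$ and $|\xi_{\rho,j}|\le 1$) gives $\mathrm{(I)}\to \tfrac{N+2}{N-2}\int U_{0,1}^{4/(N-2)}\xi_j\Phi\,dx$. Term (II) is $o(1)$ because~\eqref{eq1.12} forces $\lambda_\rho^{(1)}=O(\rho^{(N-4)/2})\to 0^+$ while $\mu_{j,\rho}^{(1)}\to +\infty$ by~\eqref{2.2}. Term (III) is the delicate one: by~\eqref{equ:241114-e3} it factors as the product of a spatial factor converging to $\int U_{0,1}\Phi\,dx$ and a scalar factor
$$-\frac{4}{N-2}\,\frac{(\mu_{j,\rho}^{(1)})^{(N-2)/2}}{(\mu_{j,\rho}^{(1)})^{2}\rho}\sum_i\frac{1}{(\mu_{i,\rho}^{(1)})^{(N-2)/2}}\int_{\mathbb{R}^N}U_{0,1}^{(N+2)/(N-2)}\xi_{\rho,i}\,dx.$$
Invoking~\eqref{eq1.12} in the form $\rho^{1/2}\mu_{i,\rho}^{(1)}\to A^{1/2}\mu_i$ with $A:=\bigl(\int U_{0,1}^2\bigr)\sum_i\mu_i^{-2}$, every scaling ratio stabilizes, yielding a finite, strictly positive limit that I package as the constant $C_*$ of the statement.

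The main obstacle is the tail of the integrals $\int_{\mathbb{R}^N}U_{0,1}^{(N+2)/(N-2)}\xi_{\rho,i}\,dx$ on the noncompact whole space, where the convergence $\xi_{\rho,i}\to\xi_i$ is a priori only local. The saving fact is that $U_{0,1}^{(N+2)/(N-2)}(y)\sim |y|^{-(N+2)}$ at infinity and hence belongs to $L^1(\mathbb{R}^N)$; combined with $\|\xi_{\rho,i}\|_\infty\le 1$, this yields uniform smallness of the tails, so a standard truncation argument (split at $|y|=R$, use local uniform convergence inside, the $L^1$-tail bound outside, then send $R\to\infty$) forces $\int U_{0,1}^{(N+2)/(N-2)}\xi_{\rho,i}\to \int U_{0,1}^{(N+2)/(N-2)}\xi_i$. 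Feeding everything back, the distributional identity for $\xi_j$ follows; standard elliptic regularity then promotes it to a classical solution of the stated equation, and collecting the limiting constants from~\eqref{eq1.12} shows $C_*>0$.
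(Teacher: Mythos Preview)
Your approach is exactly the paper's: test the rescaled equation~\eqref{equ:241015-e1} against $\Phi\in C_0^\infty$, invoke Lemma~\ref{lm:241114-l1} (that is, \eqref{equ:241114-e2}, \eqref{equ:241114-e3}) together with the bound~\eqref{equ:241015-e3} on the $\lambda_\rho^{(1)}$-term, pass to the limit, and upgrade via elliptic regularity. Your explicit truncation argument for the tail of $\int U_{0,1}^{(N+2)/(N-2)}\xi_{\rho,i}$ and your use of~\eqref{eq1.12} to identify the limiting scaling constant are details the paper leaves implicit; note only that what actually emerges from~\eqref{equ:241114-e3} is a linear combination $\sum_i c_{j,i}\int U_{0,1}^{(N+2)/(N-2)}\xi_i$ with positive coefficients $c_{j,i}$, so your phrase ``strictly positive limit'' should refer to these coefficients rather than to the full scalar factor (the paper's single-index notation in the statement is a minor abuse that is harmless for the sequel, since Lemma~\ref{lm:241114-l3} forces the whole nonlocal coefficient to vanish anyway).
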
	
	
	\begin{proof}
		In view of \eqref{equ:241015-e1}, \eqref{equ:241015-e2}, \eqref{equ:241015-e3} and \eqref{equ:241015-e10}, we notice  that 
		\begin{align}\label{equ:241015-e4}
			-\int_{B_{\mu_{j, \rho}^{(1)} d}\left(x_{j, \rho}^{(1)}\right)} \Delta \xi_{\rho, j}(x) \Phi(x) d x=&\left(\frac{N+2}{N-2}\right) \int_{\mathbb{R}^N} U_{0,1}^{\frac{4}{N-2}}(x) \xi_{\rho, j}(x) \Phi(x) d x  +o(\frac{1}{\bar{\mu}_{\rho}}) \\ \nonumber
			&  + \Big(-\frac{4}{N-2} \sum_{j=1}^k 	\frac{1}{(\mu_{j, \rho}^{(1)})^{\frac{N-2}{2}}}\int_{\mathbb{R}^N} \big [    U_{0, 1}(x)  \big]^{\frac{N+2}{N-2}}    \xi_{\rho, j}(x)   dx  \Big)  \\  \nonumber
			& \cdot  \frac{(\mu_{j, \rho}^{(1)})^{(N-2)/2}}{(\mu_{j, \rho}^{(1)})^2\rho} \int_{B_{\mu_{j, \rho}^{(1)} d}(x_{j, \rho}^{(1)})}  U_{0, 1}(x)\Phi(x)dz.
		\end{align}

		Letting $\rho \rightarrow 0$ in \eqref{equ:241015-e4} and using the elliptic regularity theory, we find that $\xi_j(x)$ satisfies
		\begin{equation*}
			-\Delta \xi_j(x)=\frac{N+2}{N-2} U_{0,1}^{\frac{4}{N-2}}(x) \xi_j(x) -\frac{4 C_*}{N-2}U_{0,1}\int_{\mathbb{R}^N}  U_{0,1}^{\frac{N+2}{N-2}}(x) \xi_j(x)dx  \quad \text { in } \mathbb{R}^N,
		\end{equation*}
		where $ C_*  $ is a positive constant. 
	\end{proof}

	Next, we give the kernel of a linearized  operator.
\begin{lemma}\label{lm:241114-l3}
	Let $ \xi_0:=(\xi_1,...,\xi_N)$  be a bounded solution of following system:
	\begin{equation}\label{equ:241015-e11}
		-\Delta \xi_j(x)=\frac{N+2}{N-2} U_{0,1}^{\frac{4}{N-2}}(x) \xi_j(x) -\frac{4 C_*}{N-2}U_{0,1}\int_{\mathbb{R}^N}  U_{0,1}^{\frac{N+2}{N-2}}(x) \xi_j(x)dx  ~~~ \text { in } \mathbb{R}^N, \quad \text{for}~~ j=1,...,N,
	\end{equation}
	where $ C_*  $ is a positive constant. Then it holds 
	\begin{align*}
		\xi_j(x)=\sum\limits_{i=0}^N c_{j, i} \psi_i(x),
	\end{align*}
	where $c_{j, i}, i=0,1, \cdots, N$ are some constants and
	\begin{align*}
			\psi_0(x)=\left.\frac{\partial U_{0, \mu}(x)}{\partial \mu}\right|_{\mu=1}, ~\psi_i(x)=\frac{\partial U_{0,1}(x)}{\partial x_i}, ~i=1, \cdots, N.
	\end{align*}

\end{lemma}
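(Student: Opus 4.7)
The plan is to first rewrite \eqref{equ:241015-e11} as an inhomogeneous linearized critical equation with a one-dimensional forcing. Setting
\begin{equation*}
c_j := \int_{\mathbb{R}^N} U_{0,1}^{(N+2)/(N-2)}\,\xi_j\,dx,
\end{equation*}
which is finite since $\xi_j$ is bounded and $U_{0,1}^{(N+2)/(N-2)}(z)\lesssim (1+|z|)^{-(N+2)}$ is integrable, \eqref{equ:241015-e11} becomes
\begin{equation*}
L\xi_j \;:=\; -\Delta \xi_j - \tfrac{N+2}{N-2}\,U_{0,1}^{4/(N-2)}\,\xi_j \;=\; A_j\,U_{0,1} \quad\text{in } \mathbb{R}^N,\qquad A_j := -\tfrac{4C_*}{N-2}\,c_j.
\end{equation*}
My strategy is to force $A_j=0$ by testing the equation against $\psi_0$, and then to invoke the classical nondegeneracy of the linearized operator associated to the Talenti bubble to identify $\xi_j$.

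Testing requires upgrading the $L^\infty$-bound on $\xi_j$ to decay at infinity. I plan to use the Newtonian representation $\xi_j(y)=c_N\int_{\mathbb{R}^N}|y-z|^{-(N-2)}\bigl(\tfrac{N+2}{N-2}U_{0,1}^{4/(N-2)}\xi_j + A_jU_{0,1}\bigr)(z)\,dz$ together with a bootstrap through the convolution estimate \eqref{equ:241009-e1}: starting from $|\xi_j|\le 1$ and noting that the integrand is bounded by $C(1+|z|)^{-4}$, each application of \eqref{equ:241009-e1} increases the decay rate by two powers, up to the threshold imposed by the $A_jU_{0,1}\lesssim(1+|z|)^{-(N-2)}$ contribution. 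This should yield $|\xi_j(x)|\le C(1+|x|)^{-(N-4)}$ together with the gradient estimate $|\nabla\xi_j(x)|\le C(1+|x|)^{-(N-3)}$ from differentiating the representation. Since $\psi_0\lesssim (1+|x|)^{-(N-2)}$ and $|\nabla\psi_0|\lesssim (1+|x|)^{-(N-1)}$, the surface terms $\int_{\{|x|=R\}}(\psi_0\,\partial_\nu\xi_j-\xi_j\,\partial_\nu\psi_0)\,dS$ are then of order $R^{-(N-4)}\to 0$ as $R\to\infty$ for $N\ge 6$. Self-adjointness of $L$ and $L\psi_0=0$ thus give $\int_{\mathbb{R}^N}\psi_0\,L\xi_j\,dx=0$, hence $A_j\int_{\mathbb{R}^N} U_{0,1}\,\psi_0\,dx=0$.

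To finish, I compute the test integral using the scaling $U_{0,\mu}(x)=\mu^{(N-2)/2}U_{0,1}(\mu x)$, which yields $\int_{\mathbb{R}^N} U_{0,\mu}^2\,dx=\mu^{-2}\int_{\mathbb{R}^N} U_{0,1}^2\,dx$ (finite for $N\ge 5$). Differentiating at $\mu=1$ gives $\int_{\mathbb{R}^N} U_{0,1}\,\psi_0\,dx = -\int_{\mathbb{R}^N} U_{0,1}^2\,dx \neq 0$, so $A_j=c_j=0$ and $L\xi_j=0$. The classical nondegeneracy result of Rey (and Bianchi--Egnell) for bounded solutions of the linearized critical Sobolev equation on $\mathbb{R}^N$ then forces $\xi_j=\sum_{i=0}^N c_{j,i}\psi_i$, which is the desired conclusion. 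Note that by odd symmetry $\int_{\mathbb{R}^N} U_{0,1}\,\psi_i\,dx=0$ for $i=1,\ldots,N$, so $\psi_0$ is the unique kernel direction that detects the nonlocal coupling; the main obstacle I anticipate is making the decay bootstrap tight enough that the Green's identity is rigorously valid, since it is exactly the vanishing of the boundary terms that allows the one-dimensional obstruction $A_j$ to be ruled out.
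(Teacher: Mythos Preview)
Your approach is correct and essentially coincides with the paper's: the paper also tests the equation against a kernel element, namely $\bar{\xi} := x\cdot\nabla U_{0,1} + \tfrac{N-2}{2}U_{0,1}$, which is precisely your $\psi_0$, and arrives at the same identity $\int_{\mathbb{R}^N} U_{0,1}\psi_0\,dx = -\int_{\mathbb{R}^N} U_{0,1}^2\,dx \neq 0$ to force the nonlocal term to vanish before invoking nondegeneracy. Your decay bootstrap to justify the integration by parts is additional rigor that the paper's proof simply omits.
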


\begin{proof}
 We will show that $ \int_{\mathbb{R}^N}  U_{0,1}^{\frac{N+2}{N-2}}(x) \xi_j(x)dx=0  $.
 Set $$ L(u):= -\Delta u-\frac{N+2}{N-2} U_{0,1}^{\frac{4}{N-2}}(x)u   $$ 
and 
we may choose  
$$ \bar{ \xi }= x\cdot \nabla U_{0,1} + \frac{N-2}{2} U_{0,1}.  $$ 
Then, by direct calculation,  we can deduce that 
\begin{align*}
	L (  \bar{ \xi }  ) =0.
\end{align*}
Moreover,  we  observe  that 
\begin{align}\label{equ:241101-e1}
	\langle 	L (  \xi_j  ) ,\bar{ \xi } \rangle= \langle 	L (  \bar{ \xi }  ) , \xi_j  \rangle=0.
\end{align}
On the other hand, we see that 
\begin{align}\label{equ:241101-e2}
	\langle 	L (  \xi_j  ) ,\bar{ \xi } \rangle = -\frac{4 C_*}{N-2}\int_{\mathbb{R}^N}  U_{0,1}^{\frac{N+2}{N-2}}(x) \xi_j(x)dx  \Big( \int_{\mathbb{R}^N}  U_{0,1} \big (x\cdot \nabla U_{0,1} + \frac{N-2}{2} U_{0,1} \big)dx \Big).
\end{align}
A direct calculation shows that
\begin{align}\label{equ:241101-e3}
	\int_{\mathbb{R}^N}  U_{0,1} \big (x\cdot \nabla U_{0,1} + \frac{N-2}{2} U_{0,1} \big)dx=-	\int_{\mathbb{R}^N}  (U_{0,1})^2 dx \neq 0.
\end{align}
In view of \eqref{equ:241015-e11}, \eqref{equ:241101-e1}, \eqref{equ:241101-e2} and \eqref{equ:241101-e3},  we obtain
\begin{align}\label{equ:241103-e2}
	\int_{\mathbb{R}^N}  U_{0,1}^{\frac{N+2}{N-2}}(x) \xi_j(x)dx=0,
\end{align}
which gives $\xi_j(x)=\sum\limits_{i=0}^N c_{j, i} \psi_i(x)$. 
\end{proof}

	Combing Lemma \ref{lm:241114-l1}, Lemma \ref{lm:241114-l2} with Lemma \ref{lm:241114-l3}, we can obtain the following Lemma.
	\begin{lemma}\label{lm:241103-l1}
		 For $N \geq 6$ and $j=1, \cdots, k$, let $\xi_{\rho, j}(x)=\xi_\rho\big(\frac{x}{\mu_{j, \rho}^{(1)}}+x_{j, \rho}^{(1)}\big)$. Then by taking a subsequence if necessary, we have
		 \begin{align}\label{equ:241015-e12}
			\Big|\xi_{\rho, j}(x)-\sum_{i=0}^N c_{j, i} \psi_i(x)\Big|=o\Big(\frac{1}{\bar{\mu}_\rho}\Big)~ \text{uniformly in} ~ C^1\left(B_R(0)\right) ~ \text{for any}~ R>0,
		\end{align} 
		where $c_{j, i}, i=0,1, \cdots, N$ are some constants and
			
			$$
			\psi_0(x)=\left.\frac{\partial U_{0, \mu}(x)}{\partial \mu}\right|_{\mu=1}, ~\psi_i(x)=\frac{\partial U_{0,1}(x)}{\partial x_i}, ~i=1, \cdots, N.
			$$
		\end{lemma}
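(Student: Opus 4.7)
The plan is to combine Lemmas \ref{lm:241114-l1}, \ref{lm:241114-l2}, \ref{lm:241114-l3} with elliptic regularity and a blow-up/contradiction argument to upgrade the qualitative $C^1_{\mathrm{loc}}$ convergence (that these three lemmas already deliver) into the quantitative rate $o(1/\bar\mu_\rho)$.

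First I would verify compactness of $\{\xi_{\rho,j}\}$: since $\|\xi_\rho\|_{L^\infty}=1$, the term $\lambda_\rho^{(1)}/(\mu_{j,\rho}^{(1)})^2$ is $o(1)$, the coefficient $C_\rho/(\mu_{j,\rho}^{(1)})^2$ is locally bounded by \eqref{equ:241106-e4}, and the $g_\rho$ contribution is locally bounded by the computation leading to \eqref{equ:241015-e10}. Applying standard interior $C^{1,\alpha}$ regularity to \eqref{equ:241015-e1} and Arzel\`a--Ascoli, we may pass to a subsequence so that $\xi_{\rho,j}\to \xi_j$ in $C^1(B_R(0))$ for every $R>0$. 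Passing to the limit in the weak formulation \eqref{equ:241015-e4} (which is Lemma \ref{lm:241114-l1} applied to arbitrary test functions) and invoking Lemma \ref{lm:241114-l2}, the limit $\xi_j$ is a bounded solution of the nonlocal linearized equation \eqref{equ:241015-e11}; Lemma \ref{lm:241114-l3} then yields $\xi_j=\sum_{i=0}^N c_{j,i}\psi_i$.

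To sharpen this into $o(1/\bar\mu_\rho)$ I would argue by contradiction. Suppose there exist $R_0>0$ and a subsequence with
\[
\tau_\rho:=\bar\mu_\rho\,\Big\|\xi_{\rho,j}-\sum_{i=0}^N c_{j,i}\psi_i\Big\|_{C^1(B_{R_0}(0))}\not\to 0.
\]
Define $\tilde\eta_{\rho,j}:=\tau_\rho^{-1}\bar\mu_\rho\bigl(\xi_{\rho,j}-\sum_{i=0}^N c_{j,i}\psi_i\bigr)$. Subtracting the limiting equation from \eqref{equ:241015-e1} and using that the error terms in \eqref{equ:241114-e2}--\eqref{equ:241114-e3} are genuinely $o(1/\bar\mu_\rho)$, the rescaled function $\tilde\eta_{\rho,j}$ is uniformly bounded on every $B_R(0)$ and solves an equation of the same form as \eqref{equ:241015-e11} up to a right-hand side tending to zero in $L^\infty_{\mathrm{loc}}$. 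The same regularity/compactness procedure then produces a limit $\tilde\eta_j$ with $\|\tilde\eta_j\|_{C^1(B_{R_0}(0))}=1$, again a bounded solution of \eqref{equ:241015-e11}, so by Lemma \ref{lm:241114-l3} $\tilde\eta_j=\sum_i d_{j,i}\psi_i$.

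The last step is to fix the constants $c_{j,i}$ so that $\tilde\eta_j$ is forced to vanish, yielding the contradiction. The natural way is to use the orthogonality built into the ansatz: $w_\rho^{(l)}\in\bigcap_j E_{x_{j,\rho}^{(l)},\mu_{j,\rho}^{(l)}}$ encodes the $H^1$-orthogonality of the error terms to $\partial_\mu PU_{x_{j,\rho}^{(l)},\mu_{j,\rho}^{(l)}}$ and $\partial_{x_i}PU_{x_{j,\rho}^{(l)},\mu_{j,\rho}^{(l)}}$. Taking the difference of these orthogonality conditions for $l=1$ and $l=2$, dividing by $\|u_\rho^{(1)}-u_\rho^{(2)}\|_{L^\infty}$, and rescaling to the $\xi_{\rho,j}$ variable produces $N+1$ linear integral identities that, in the limit $\rho\to 0^+$, become orthogonality of $\tilde\eta_j$ to each $\psi_i$, so the coefficients $d_{j,i}$ must vanish and $\tilde\eta_j\equiv 0$, contradicting $\|\tilde\eta_j\|_{C^1(B_{R_0}(0))}=1$. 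The main obstacle, and the place where the proof must be executed carefully, is precisely this orthogonality bookkeeping, since the projection kernels for the two solutions are centered at different points with different heights; transferring the orthogonality cleanly requires the sharp parameter estimates \eqref{equ:241015-e7} together with the decay \eqref{equ:241114-e4} of $w_\rho^{(l)}$, to guarantee the remainders are of genuinely higher order than $1/\bar\mu_\rho$ rather than merely comparable to it.
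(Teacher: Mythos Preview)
The paper's own argument for this lemma is literally one sentence: ``Combining Lemma~\ref{lm:241114-l1}, Lemma~\ref{lm:241114-l2} with Lemma~\ref{lm:241114-l3}, we can obtain the following Lemma.'' Your first two paragraphs reproduce exactly that: elliptic compactness gives $\xi_{\rho,j}\to\xi_j$ in $C^1_{\mathrm{loc}}$, Lemma~\ref{lm:241114-l2} identifies the limit equation, and Lemma~\ref{lm:241114-l3} forces $\xi_j=\sum_i c_{j,i}\psi_i$. So on the qualitative level your proposal and the paper coincide.

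The quantitative rate $o(1/\bar\mu_\rho)$ is another matter. The paper offers nothing beyond the sentence above, and the three lemmas by themselves only yield $o(1)$; the analogous Proposition~3.5 in \cite{CLP} is in fact stated without any rate. Your blow-up scheme is therefore not a reconstruction of the paper's proof but an attempt to fill a gap the paper leaves. Two points need more than you give. First, after rescaling by $\tau_\rho^{-1}\bar\mu_\rho$ you claim $\tilde\eta_{\rho,j}$ is uniformly bounded on \emph{every} $B_R(0)$; this does not follow, since the normalization controls only $B_{R_0}$ and on larger balls you know merely $\xi_{\rho,j}-\sum_i c_{j,i}\psi_i=o(1)$, so $\tilde\eta_{\rho,j}=\tau_\rho^{-1}\bar\mu_\rho\cdot o(1)$ may well blow up. Interior elliptic estimates do not close this because the linearized operator has kernel. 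Second, your orthogonality mechanism is attached to $w_\rho^{(l)}$, not to $\xi_\rho$: the difference $u_\rho^{(1)}-u_\rho^{(2)}$ also contains the bubble mismatch $\sum_j(PU_{x_{j,\rho}^{(1)},\mu_{j,\rho}^{(1)}}-PU_{x_{j,\rho}^{(2)},\mu_{j,\rho}^{(2)}})$, which after rescaling lies precisely along the $\psi_i$'s and thus contributes to the very kernel component you are trying to rule out in $\tilde\eta_j$. In short, your qualitative argument matches the paper's (non)proof; your quantitative step is extra work whose main obstacle you have correctly located but not closed.
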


	\begin{lemma}
		For $\xi_\rho(x)$ defined by \eqref{equ:241009-e2}, we have
		\begin{align}\label{equ:241008-e1}
			\int_{\Omega} \xi_\rho(x) d x=O\Big(  \frac{ 1 }{    \bar{\mu}_\rho^{N-2} }  \Big)
			\quad \text{and}\quad 
			\xi_\rho(x)=O\Big(  \frac{ 1 }{    \bar{\mu}_\rho^{N-2} }  \Big) \quad  \text{in} ~ \Omega \backslash \bigcup_{j=1}^k B_d(x_{j, \rho}^{(1)}),	
		\end{align}
		where $d>0$ is any small fixed constant.
	\end{lemma}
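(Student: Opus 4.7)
The plan is to reduce both conclusions to the single weighted pointwise estimate
\begin{equation*}
|\xi_\rho(x)|\ \le\ C\sum_{j=1}^k\frac{1}{\bigl(1+\mu_{j,\rho}^{(1)}|x-x_{j,\rho}^{(1)}|\bigr)^{N-2}}\qquad\text{for all } x\in\Omega,
\end{equation*}
which I will call the weighted bound. Granting it, at a point $x\in\Omega\setminus\bigcup_jB_d(x_{j,\rho}^{(1)})$ each summand is $O(\bar\mu_\rho^{-(N-2)})$, giving the second assertion. For the first, split $\Omega$ into the peak balls $B_d(x_{j,\rho}^{(1)})$ and their complement; the complement contributes $O(\bar\mu_\rho^{-(N-2)})$ by the pointwise bound, while on each peak ball the change of variables $y=\mu_{j,\rho}^{(1)}(x-x_{j,\rho}^{(1)})$ gives
\begin{equation*}
\int_{B_d(x_{j,\rho}^{(1)})}\bigl(1+\mu_{j,\rho}^{(1)}|x-x_{j,\rho}^{(1)}|\bigr)^{-(N-2)}dx=\bigl(\mu_{j,\rho}^{(1)}\bigr)^{-N}\int_{B_{\mu_{j,\rho}^{(1)}d}}(1+|y|)^{-(N-2)}dy=O\bigl((\mu_{j,\rho}^{(1)})^{-(N-2)}\bigr),
\end{equation*}
since $\int_{B_R}(1+|y|)^{-(N-2)}dy\sim R^{2}$ for $N\ge 3$.

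To prove the weighted bound I would use the Dirichlet Green's function $G$ of $-\Delta$ on $\Omega$ and the representation
\begin{equation*}
\xi_\rho(x)=\int_\Omega G(x,y)\bigl[(\lambda_\rho^{(1)}+C_\rho(y))\xi_\rho(y)+g_\rho(y)\bigr]\,dy
\end{equation*}
in a bootstrap. Starting from $\|\xi_\rho\|_{L^\infty}\le 1$, I induct on $\beta\in\{0,2,4,\dots,N-2\}$, showing that if $|\xi_\rho(x)|\le C_\beta\sum_j(1+\mu_{j,\rho}^{(1)}|x-x_{j,\rho}^{(1)}|)^{-\beta}$ holds, then the same estimate holds with $\beta$ replaced by $\beta+2$. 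The $\lambda_\rho^{(1)}\xi_\rho$ term is a higher-order remainder. For the principal term $C_\rho\xi_\rho$, the expansion \eqref{equ:241106-e4} shows that near $x_{j,\rho}^{(1)}$ the factor $C_\rho$ is bounded by $C\mu_{j,\rho}^{2}(1+\mu_{j,\rho}^{(1)}|y-x_{j,\rho}^{(1)}|)^{-4}$ plus errors already of smaller order; inserting the inductive bound on $\xi_\rho$, scaling, and applying \eqref{equ:241009-e1} with $\sigma=\beta+2\le N-2$ yields exactly the $(1+\mu_{j,\rho}^{(1)}|x-x_{j,\rho}^{(1)}|)^{-(\beta+2)}$ improvement.

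The main obstacle, and the place where the $L^2$-constrained problem diverges from the classical treatment of \cite{CLP}, is the control of the source $g_\rho(y)=A_\rho u_\rho^{(2)}(y)$ with $A_\rho=(\lambda_\rho^{(1)}-\lambda_\rho^{(2)})/\|u_\rho^{(1)}-u_\rho^{(2)}\|_{L^\infty}$, because the scalar $A_\rho$ is itself a nonlocal functional of $\xi_\rho$. For this I would combine three inputs: (i) the identity \eqref{equ:241031-e3} already established above, which expresses $\rho A_\rho$ as $-\tfrac{4}{N-2}\sum_j(\mu_{j,\rho}^{(1)})^{-(N-2)/2}\int U_{0,1}^{(N+2)/(N-2)}\xi_{\rho,j}\,dx$ plus a high-order remainder; (ii) the kernel orthogonality \eqref{equ:241103-e2} together with the local convergence \eqref{equ:241015-e12} of $\xi_{\rho,j}$ to $\sum_ic_{j,i}\psi_i$, which forces each $\int U_{0,1}^{(N+2)/(N-2)}\xi_{\rho,j}$ to be small; and (iii) the scaling $\rho\sim\bar\mu_\rho^{-2}$ from \eqref{eq1.12}. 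Combined with the bubble-type pointwise bound $u_\rho^{(2)}(y)\le C\sum_j\mu_{j,\rho}^{(N-2)/2}(1+\mu_{j,\rho}^{(1)}|y-x_{j,\rho}^{(1)}|)^{-(N-2)}$, one further application of \eqref{equ:241009-e1} shows that $\int G(x,y)|g_\rho(y)|dy$ is compatible with the right-hand side of the inductive hypothesis at every stage. Tracking this nonlocal $k$-peak coupling consistently through each level of the bootstrap is the chief technical difficulty of the argument.
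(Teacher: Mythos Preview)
Your approach---representing $\xi_\rho$ via the Dirichlet Green's function and iteratively improving a weighted pointwise bound by means of the convolution estimate \eqref{equ:241009-e1}, while controlling the extra source $g_\rho$ through the identity \eqref{equ:241031-e3} for $\rho A_\rho$---is exactly the route the paper takes. Two small discrepancies worth noting: at the last bootstrap step one must use the $\sigma=N-2$ case of \eqref{equ:241009-e1}, which produces a harmless $\ln\bar\mu_\rho$ factor your stated weighted bound omits, and the paper does not try to absorb the $g_\rho$ contribution into the decaying profile but simply estimates $\int_\Omega G(y,x)g_\rho(y)\,dy=O(\bar\mu_\rho^{-(N-2)})$ once (using \eqref{equ:241031-e3} together with the smallness of $\int U_{0,1}^{(N+2)/(N-2)}\xi_{\rho,j}$ already implied by Lemmas \ref{lm:241114-l3}--\ref{lm:241103-l1}) and carries it as a fixed additive remainder through the iteration---a slightly simpler bookkeeping than the coupled scheme you outline, though both lead to the same conclusion.
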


	\begin{proof}
	By \eqref{equ:241031-e3}, \eqref{equ:241114-e4} and direct calculation, we have 
		\begin{align}\label{equ:241009-e4}
		  	\int_{\Omega} G(y, x)g_{\rho}(y)  dy = & 	\int_{\Omega}  G(y, x)   \frac{\rho \big(\lambda_\rho^{(1)}-\lambda_\rho^{(2)}\big)}{ \rho \| u_\rho^{(1)}- u_\rho^{(2)} \|_{L^{\infty}}} u_\rho^{(2)}(x) dy    \\ \nonumber
			= & O\Big(  \int_{\Omega}   	\frac{1}{  \rho(\mu_{j, \rho}^{(1)})^{\frac{N+2}{2}}}   \frac{1}{ |y-x|^{N-2}}   \big(  \sum_{j=1}^k  U_{x_{j, \rho}^{(2)}, \mu_{j, \rho}^{(2)}}   +w_{\rho}^{(2)} \big)  dy   \Big)   \\ \nonumber                               
			= & O\Big(     \int_{\Omega}  \frac{1}{ \rho(\mu_{j, \rho}^{(1)})^{\frac{N+2}{2}}}   \frac{1}{ |y-x|^{N-2}}   \sum_{j=1}^k     \frac{ \big( \mu_{j, \rho}^{(2)}  \big)^{(N-2)/2}  } {   \big(  1+ |  y-\mu_{j, \rho}^{(2)} x_{j, \rho}^{(2)}  |^2    \big)^{(N-2)/2}      }  dy  \Big)    + O\Big(  \frac{ 1 }{    \bar{\mu}_\rho^N }  \Big)     \\ \nonumber
		= & O\Big(      \sum_{j=1}^k     	\frac{1}{\rho(\mu_{j, \rho}^{(1)})^{\frac{N+2}{2}} }   \int_{\mathbb{R}^N}   \frac{1}{ |y- \mu_{j, \rho}^{(2)}x|^{N-2}}         \frac{ \big( \mu_{j, \rho}^{(2)}  \big)^{(N-6)/2}  } {   \big(  1+ |  y-\mu_{j, \rho}^{(2)} x_{j, \rho}^{(2)}  |^2    \big)^{(N-2)/2}      }  dx   \Big)     + O\Big(  \frac{ 1 }{    \bar{\mu}_\rho^N }  \Big)                                          \\ \nonumber
			=& O \Big( 	\frac{1}{(\mu_{j, \rho}^{(1)})^2} \sum_{j=1}^{k} \frac{1}{\big(  1+ |  y-\mu_{j, \rho}^{(2)} x_{j, \rho}^{(2)}  |^2    \big)^{(N-4)/2} }    \Big)+  O\Big(  \frac{ 1 }{    \bar{\mu}_\rho^N }  \Big)   \\ \nonumber
	=& O \Big( 	\frac{1}{\bar{\mu}_\rho^2} \sum_{j=1}^{k} \frac{1}{\big(  1+ |  y-\bar{\mu}_\rho x_{j, \rho}^{(2)}  |^2    \big)^{(N-4)/2} }    \Big)+  O\Big(  \frac{ 1 }{    \bar{\mu}_\rho^N }  \Big) \\ \nonumber
	=& O\Big(  \frac{ 1 }{    \bar{\mu}_\rho^{N-2} }  \Big).
		\end{align}


	By the potential theory and \eqref{equ:241009-e1}, \eqref{equ:241009-e2} and \eqref{equ:241009-e4}, we have
		\begin{align}\label{equ:241107-e1}
			\xi_\rho(x) &  =\int_{\Omega} G(y, x)\left[C_{\rho}(y)\xi_\rho(y)+\lambda_\rho^{(1)} \xi_\rho(x)+g_{\rho}(y)\right] d y\\ \nonumber
				& =O\bigg(\sum_{j=1}^k \sum_{l=1}^2 \frac{1}{\left(1+\mu _{j, \rho}^{(l)}|x-x_{j, \rho}^{(l)}|\right)^2}\bigg)+O(\lambda_\rho)  +O \Big(  \frac{ 1 }{    \bar{\mu}_\rho^{N-2}  } \Big) \\ \nonumber
			& =O\bigg(\sum_{j=1}^k \sum_{l=1}^2 \frac{1}{\left(1+\mu _{j, \rho}^{(l)}\big|x-x_{j, \rho}^{(l)}\big|\right)^2}\bigg)	+O \Big(  \frac{ 1 }{    \bar{\mu}_\rho^{N-2}  } \Big).
		\end{align}
		
		Next repeating the above process, we know
		\begin{equation*}
				\begin{aligned}
				\xi_\rho(x) & =O\bigg(\int_{\Omega} \frac{1}{|x-y|^{N-2}}(C_{\rho}(y)+\lambda_\rho^{(1)} )\Big(\sum_{j=1}^k \sum_{l=1}^2 \frac{1}{\big(1+\mu _{j, \rho}^{(l)}\big|x-x_{j, \rho}^{(l)}\big|\big)^2}+  \frac{ 1 }{    \bar{\mu}_\rho^{N-2}  }   \Big)\bigg)\\
				&+O\Big(  \frac{ 1 }{    \bar{\mu}_\rho^{N-2} }  \int_{\Omega} \frac{1}{|x-y|^{N-2}} d y\Big) \\
				& =O\left( \sum_{j=1}^k \sum_{l=1}^2 \frac{1}{\big(1+\mu _{j, \rho}^{(l)}|x-x_{j, \rho}^{(l)}|\big)^4}\right)+O\Big(  \frac{ 1 }{    \bar{\mu}_\rho^{N-2} }  \Big).
			\end{aligned}
		\end{equation*}
	Then we can proceed as in the above argument for finite number of times to prove
\begin{align*}
	\xi_\rho(x)=O\Big(\sum_{j=1}^k \sum_{l=1}^2 \frac{\ln \bar{\mu}_\rho}{(1+\mu _{j, \rho}^{(l)}\big|x-x_{j, \rho}^{(l)}\big|)^{N-2}}\Big)+O\Big(  \frac{ 1 }{    \bar{\mu}_\rho^{N-2} }  \Big).
\end{align*}
Hence, we can obtain \eqref{equ:241008-e1}.

	\end{proof}

\begin{proposition}
	 Let $\xi_{\rho}(x)$  be defined as in \eqref{equ:241009-e2}. Then it holds 
	\begin{equation}\label{equ:241017-e6}
		\begin{aligned}
			&\begin{aligned}
				\xi_{\rho}(x)= & \sum_{j=1}^k A_{\rho, j} G\big(x_{j, \rho}^{(1)}, x\big)+\sum_{j=1}^k \sum_{i=1}^N B_{\rho, j, i} \partial_i G\big(x_{j, \rho}^{(1)}, x\big) +O \big(  \frac{ 1 }{    \bar{\mu}_\rho^{N-2}  } \big) ~\text { in } C^1\Big(\Omega \backslash \bigcup_{j=1}^k B_{2 d}(x_{j, \rho}^{(1)})\Big),
			\end{aligned}
		\end{aligned}
	\end{equation}
where $d>0$ is any small fixed constant, $\partial_i G(y, x)=\frac{\partial G(y, x)}{\partial y_i}$,
\begin{align}\label{equ:241016-e1}
	A_{\rho, j}=\int_{B_d\left(x_{j, \rho }^{(1)}\right)} C_{\rho}(x) \xi_{\rho}(x) d x \text { and } B_{\rho, j, i}=\int_{B_d\left(x_{j, \rho}^{(1)}\right)}\left(x_i-x_{j, \rho, i}^{(1)}\right) C_{\rho}(x) \xi_{\rho}(x) d x.
\end{align}

\end{proposition}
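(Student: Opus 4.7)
The plan is to start from the Green's function representation
$$\xi_\rho(x)=\int_\Omega G(y,x)\bigl[C_\rho(y)\xi_\rho(y)+\lambda_\rho^{(1)}\xi_\rho(y)+g_\rho(y)\bigr]\,dy,$$
already exploited in \eqref{equ:241107-e1}, split the domain of integration into the concentration zones $B_d(x_{j,\rho}^{(1)})$ and their complement, and then Taylor-expand $G(y,x)$ in the first variable around $x_{j,\rho}^{(1)}$ whenever $y\in B_d(x_{j,\rho}^{(1)})$. The key point is that for $x\in\Omega\setminus\bigcup_j B_{2d}(x_{j,\rho}^{(1)})$ both $G(\cdot,x)$ and all its $x$-derivatives are uniformly smooth in $y$ on each $B_d(x_{j,\rho}^{(1)})$, so writing
$$G(y,x)=G(x_{j,\rho}^{(1)},x)+\sum_{i=1}^{N}(y_i-x_{j,\rho,i}^{(1)})\,\partial_i G(x_{j,\rho}^{(1)},x)+R_{2,j}(y,x),\qquad R_{2,j}(y,x)=O(|y-x_{j,\rho}^{(1)}|^2),$$
and integrating against $C_\rho\xi_\rho$ on $B_d(x_{j,\rho}^{(1)})$ isolates precisely the coefficients $A_{\rho,j}$ and $B_{\rho,j,i}$ defined in \eqref{equ:241016-e1}. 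The same expansion applied to $\partial_x G(y,x)$ will handle the $C^1$ part of the statement.

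The three non-expanded contributions should all be absorbed into the error $O(1/\bar{\mu}_\rho^{N-2})$. The $g_\rho$-integral is precisely what was already controlled by \eqref{equ:241009-e4}, and repeating the computation with $\partial_x G$ in place of $G$ gives the same bound for derivatives. The exterior integral $\int_{\Omega\setminus\bigcup_j B_d(x_{j,\rho}^{(1)})}G(y,x)C_\rho(y)\xi_\rho(y)\,dy$ is negligible because on that set $|C_\rho|=O(1/\bar{\mu}_\rho^2)$ thanks to \eqref{equ:250305-e1} while $|\xi_\rho|=O(1/\bar{\mu}_\rho^{N-2})$ by \eqref{equ:241008-e1}, yielding a total of order $1/\bar{\mu}_\rho^N$. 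For the $\lambda_\rho^{(1)}\xi_\rho$-term I would use $\lambda_\rho^{(1)}=O(\rho^{(N-4)/2})=O(1/\bar{\mu}_\rho^{N-4})$ from \eqref{eq1.12} together with the refined pointwise decay $|\xi_\rho(y)|\le C(\log\bar{\mu}_\rho)\sum_{j,l}(1+\mu_{j,\rho}^{(l)}|y-x_{j,\rho}^{(l)}|)^{-(N-2)}+O(1/\bar{\mu}_\rho^{N-2})$ that follows from iterating \eqref{equ:241107-e1}; after rescaling around each peak this contributes at most $1/\bar{\mu}_\rho^{2N-6}$, which is $o(1/\bar{\mu}_\rho^{N-2})$ as soon as $N\ge 6$.

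The main obstacle is controlling the quadratic Taylor remainder
$$E_{\rho,j}(x):=\int_{B_d(x_{j,\rho}^{(1)})}R_{2,j}(y,x)\,C_\rho(y)\xi_\rho(y)\,dy.$$
Here I would combine \eqref{equ:241106-e4}, which gives $|C_\rho(y)|\le C\,U_{x_{j,\rho}^{(1)},\mu_{j,\rho}^{(1)}}^{4/(N-2)}(y)$ on $B_d(x_{j,\rho}^{(1)})$, with the refined decay of $\xi_\rho$ quoted above, and rescale $z=\mu_{j,\rho}^{(1)}(y-x_{j,\rho}^{(1)})$ to reduce the estimate to
$$|E_{\rho,j}(x)|\le \frac{C}{(\mu_{j,\rho}^{(1)})^{N}}\int_{B_{\mu_{j,\rho}^{(1)}d}(0)}|z|^{2}\,U_{0,1}^{\frac{4}{N-2}}(z)\,(1+|z|)^{-(N-2)}\,dz.$$
Since the integrand behaves like $|z|^{-N}$ at infinity, the integral grows only logarithmically in $\bar{\mu}_\rho$, so $E_{\rho,j}(x)=O(\log\bar{\mu}_\rho/\bar{\mu}_\rho^{N})=o(1/\bar{\mu}_\rho^{N-2})$. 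Collecting all four contributions, and running the same computation with one extra $x$-derivative of $G$ to cover the $C^1$ norm, yields exactly the expansion \eqref{equ:241017-e6} on $\Omega\setminus\bigcup_j B_{2d}(x_{j,\rho}^{(1)})$.
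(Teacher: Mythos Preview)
Your proposal is correct and follows essentially the same route as the paper: Green's representation \eqref{equ:241016-e3}, splitting into the balls $B_d(x_{j,\rho}^{(1)})$ and their complement, Taylor-expanding $G(\cdot,x)$ to second order around each $x_{j,\rho}^{(1)}$ to extract $A_{\rho,j}$ and $B_{\rho,j,i}$, and then bounding the three remainder terms (the $g_\rho$-integral via \eqref{equ:241009-e4}, the $\lambda_\rho^{(1)}\xi_\rho$-integral, and the quadratic Taylor remainder) exactly as you describe. Your treatment of the quadratic remainder $E_{\rho,j}$ and of the $\lambda_\rho^{(1)}$-term is in fact slightly more detailed than the paper's, which simply records the bound $O(\ln\bar\mu_\rho/\bar\mu_\rho^{N})$ for the former and $O(\ln\bar\mu_\rho/\bar\mu_\rho^{N-2})$ (before multiplying by $\lambda_\rho^{(1)}$) for the latter; the $C^1$ statement is handled in both cases by differentiating the Green's representation in $x$ and repeating the argument.
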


\begin{proof}
 By the potential theory and \eqref{equ:241009-e3}, we have
\begin{equation}\label{equ:241016-e3}
	\xi_{\rho}(x)=\int_{\Omega} G(y, x)C_\rho(y) \xi_\rho (y) d y+\lambda_\rho^{(1)}  \int_{\Omega} G(y, x) \xi_{\rho}(y) dy+\int_{\Omega} G(y, x) g_\rho(y)  dy.
\end{equation}

Next, for $x \in \Omega \backslash \bigcup_{j=1}^k B_{2 d}\big(x_{j,\rho}^{(1)}\big)$, by \eqref{equ:241008-e1}, we find 
\begin{equation}\label{equ:241016-e2}
\begin{aligned}
	\int_{\Omega} G(y, x) \xi_{\rho}(y) d y & =\sum_{j=1}^k \int_{B_d\left(x_{j,\rho}^{(1)}\right)} G(y, x) \xi_{\rho}(y) d y+\int_{\Omega \backslash \bigcup_{j=1}^k B_d\left(x_{j, \rho}^{(1)}\right)} G(y, x) \xi_{\rho}(y) d y \\
	& =  O\left(\sum_{j=1}^k \int_{B_d\left(x_{j, \rho}^{(1)}\right)} \xi_{\rho}(y) d y\right) + O\left(\frac{\ln \bar{\mu}_{\rho}}{\bar{\mu}_{\rho}^{N-2}} \int_{\Omega} G(y, x) d y\right)=O\left(\frac{\ln \bar{\mu}_{\rho}}{\bar{\mu}_{\rho}^{N-2}}\right).
\end{aligned}
\end{equation}

Also using \eqref{equ:241008-e1} and \eqref{equ:241015-e12}, we can get	
	\begin{equation}\label{equ:241016-e4}
		\begin{aligned}
		\int_{\Omega} & G(y, x) C_{\rho}(y) \xi_{\rho}(y) d y \\
		= & \sum_{j=1}^k \int_{B_d\left(x_{j, \rho}^{(1)}\right)} G(y, x) C_{\rho}(y) \xi_{\rho}(y) d y+\int_{\Omega \backslash \cup_{j=1}^k B_d\left(x_{j, \rho}^{(1)}\right)} G(y, x) C_{\rho}(y) \xi_{\rho}(y) dy \\
		= & \sum_{j=1}^k A_{\rho, j} G\big(x_{j, \rho}^{(1)}, x\big)+\sum_{j=1}^k \sum_{i=1}^N B_{\rho, j, i} \partial_i G\big(x_{j, \rho}^{(1)}, x\big) \\
		& +O\Big(\sum_{j=1}^k \int_{B_d\left(x_{j, \rho}^{(1)}\right)}\big|y-x_{j, \rho}^{(1)}\big|^2 C_{\rho}(y) \xi_{\rho}(y) d y\Big) +O\Big(\frac{\ln \bar{\mu}_{\rho}}{\bar{\mu}_{\rho}^N} \int_{\Omega \backslash \cup_{j=1}^k B_d\left(x_{j, \rho}^{(1)}\right)} G(y, x) d y\Big) \\
		= & \sum_{j=1}^k A_{\rho, j} G\big(x_{j, \rho}^{(1)}, x\big)+\sum_{j=1}^k \sum_{i=1}^N B_{\rho, j, i} \partial_i G\big(x_{j, \rho}^{(1)}, x\big)+O\Big( \frac{\ln \bar{\mu}_{\rho}}{\bar{\mu}_{\rho}^N} \Big),
	\end{aligned}
\end{equation}
	
where $A_{\rho, j}$ and $B_{\rho, j, i}$ are defined in \eqref{equ:241016-e1}. 
From \eqref{equ:241009-e4} and 	\eqref{equ:241114-e4}, we notice that
\begin{align}\label{equ:241017-e2}
	\int_{\Omega} G(y, x) g_\rho(y)  dy=&   \int_{\Omega} G(y, x)\frac{\lambda_\rho^{(1)}-\lambda_\rho^{(2)}}{\| u_\rho^{(1)}- u_\rho^{(2)} \|_{L^{\infty}}}[\sum_{i=1}^kU_{x_{i, \rho}^{(2)}, \mu_{i, \rho}^{(2)}}(y)+ w_{\rho}^{(2)}(y)]  d y \\ \nonumber
    =&O\Big(  \frac{ 1 }{    \bar{\mu}_\rho^{N-2} }  \Big).
\end{align}

Hence \eqref{equ:241016-e3}, \eqref{equ:241016-e2}, \eqref{equ:241016-e4} and \eqref{equ:241017-e2} imply
$$
\begin{aligned}
	\xi_{\rho}(x)= & \sum_{j=1}^k A_{\rho, j} G\left(x_{j, \rho}^{(1)}, x\right)+\sum_{j=1}^k \sum_{i=1}^N B_{\rho, j, i} \partial_i G\left(x_{j, \rho}^{(1)}, x\right)
	+O \Big(  \frac{ 1 }{    \bar{\mu}_\rho^{N-2}  } \Big)
	 \text { for } x \in \Omega \backslash \bigcup_{j=1}^k B_{2 d}\Big(x_{j, \rho}^{(1)}\Big) .
\end{aligned}
$$
	
On the other hand, from \eqref{equ:241016-e3}, we know

$$
\frac{\partial \xi_{\rho}(x)}{\partial x_i}=\int_{\Omega} D_{x_i} G(y, x) C_{\rho}(y) \xi_{\rho}(y) d y+\rho \int_{\Omega} D_{x_i} G(y, x) \xi_{\rho}(y) d y, \text { for } i=1, \cdots, N.
$$

Then similar to the above estimates of $\xi_{\rho}(x)$, we can complete the proof of \eqref{equ:241017-e6}.	
\end{proof}

\begin{proposition}	
For $N \geq 6$, let $u_{\rho}^{(l)}(x)$ with $l=1,2$ be the solutions of \eqref{equ:original-Problem} satisfying \eqref{1.4}.
Then for small fixed $d>0$, it holds
\begin{equation}\label{equ:241106-e1}
u_{\rho}^{(l)}(x)=A\left(\sum_{j=1}^k \frac{G(x_{j, \rho}^{(1)}, x)}{(\mu_{j, \rho}^{(1)})^{(N-2) / 2}}\right)+O\big(\frac{1}{\bar{\mu}_{\rho}^{(N+2) / 2}}\big) \quad \text { in } ~~ C^1\big(\Omega \backslash \bigcup_{j=1}^k B_{2 d} (x_{j, \rho}^{(1)} )\big)
\end{equation}
where \begin{equation*}
	A=\int_{\mathbb{R}^N} U_{0,1}^{\frac{N+2}{N-2}}, \quad  B=\int_{\mathbb{R}^N} U_{0,1}^2 .
\end{equation*}

\end{proposition}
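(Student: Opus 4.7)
The plan is to derive the expansion from the Green representation of $u_\rho^{(l)}$, which solves $-\Delta u = u^{\frac{N+2}{N-2}}+\lambda_\rho^{(l)}u$ in $\Omega$ with zero Dirichlet data:
\[
u_\rho^{(l)}(x) = \int_\Omega G(y,x)\,\bigl(u_\rho^{(l)}\bigr)^{\frac{N+2}{N-2}}(y)\,dy + \lambda_\rho^{(l)}\int_\Omega G(y,x)\,u_\rho^{(l)}(y)\,dy.
\]
For $x\in\Omega\setminus\bigcup_j B_{2d}(x_{j,\rho}^{(1)})$, the closeness estimate $|x_{j,\rho}^{(1)}-x_{j,\rho}^{(l)}|=o(\bar\mu_\rho^{-2})$ from \eqref{equ:241015-e7} ensures that $x$ stays at distance at least $d$ from every $x_{j,\rho}^{(l)}$ for $\rho$ small, so $y\mapsto G(y,x)$ is smooth on each $B_d(x_{j,\rho}^{(l)})$. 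I would then split the first integral as $\sum_j\int_{B_d(x_{j,\rho}^{(l)})}+\int_{\Omega\setminus\bigcup_j B_d(x_{j,\rho}^{(l)})}$ and treat each piece separately.

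On each ball $B_d(x_{j,\rho}^{(l)})$, I would substitute the decomposition $u_\rho^{(l)}=PU_{x_{j,\rho}^{(l)},\mu_{j,\rho}^{(l)}}+\sum_{i\neq j}PU_i+w_\rho^{(l)}$, so the integrand reduces, up to lower-order corrections, to $U_{x_{j,\rho}^{(l)},\mu_{j,\rho}^{(l)}}^{\frac{N+2}{N-2}}$. Taylor expanding $G(\cdot,x)$ in $y$ around $x_{j,\rho}^{(l)}$ and rescaling $z=\mu_{j,\rho}^{(l)}(y-x_{j,\rho}^{(l)})$, the zeroth-order term will produce
\[
G(x_{j,\rho}^{(l)},x)\cdot(\mu_{j,\rho}^{(l)})^{-\frac{N-2}{2}}\int_{B_{\mu_{j,\rho}^{(l)} d}(0)} U_{0,1}^{\frac{N+2}{N-2}}(z)\,dz = \frac{A\,G(x_{j,\rho}^{(l)},x)}{(\mu_{j,\rho}^{(l)})^{(N-2)/2}} + O\bigl(\bar\mu_\rho^{-(N+2)/2}\bigr),
\]
the first-order term will vanish by the radial symmetry of $U_{0,1}^{\frac{N+2}{N-2}}$, and the second-order term will be $O(\bar\mu_\rho^{-(N+2)/2})$ (with an additional $\log\bar\mu_\rho$ when $N=6$). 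The cross terms from the other spikes, the $w_\rho^{(l)}$ contribution bounded via \eqref{equ:241114-e4}, and the integral over $\Omega\setminus\bigcup_j B_d(x_{j,\rho}^{(l)})$ (controlled by the pointwise decay \eqref{equ:250305-e1} of $u_\rho^{(l)}$ away from the spikes) will all produce remainders of the same order. The mass term is $\lambda_\rho^{(l)}\int_\Omega G(y,x)u_\rho^{(l)}(y)\,dy = O(\lambda_\rho^{(l)}\bar\mu_\rho^{-(N-2)/2}) = O(\bar\mu_\rho^{-(3N-10)/2})$, which is $\leq O(\bar\mu_\rho^{-(N+2)/2})$ precisely when $N\geq 6$, thanks to $\lambda_\rho^{(l)}=O(\bar\mu_\rho^{-(N-4)})$ from \eqref{equ:241108-e5}.

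To pass from the label $(l)$ to $(1)$ in the leading term, I would use $|x_{j,\rho}^{(1)}-x_{j,\rho}^{(2)}|+|\mu_{j,\rho}^{(1)}-\mu_{j,\rho}^{(2)}|=o(\bar\mu_\rho^{-2})$ together with the smoothness of $G(\cdot,x)$: the resulting perturbation of $G(x_{j,\rho}^{(l)},x)/(\mu_{j,\rho}^{(l)})^{(N-2)/2}$ is of order $o(\bar\mu_\rho^{-(N+2)/2})$. The $C^1$ estimate will follow by differentiating the integral representation in $x$ and repeating the whole argument, since $\partial_{x_i}G(y,x)$ is smooth in $y$ uniformly for $x$ at distance at least $d$ from the spikes. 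The main obstacle will be to maintain uniform control of every remainder in the $C^1$-topology; in particular, the second-moment Taylor contribution carries a logarithm at the borderline dimension $N=6$, but this $\log\bar\mu_\rho$ is multiplied by a strictly higher power of $\bar\mu_\rho^{-1}$, so it is absorbed into $O(\bar\mu_\rho^{-(N+2)/2})$ in the same manner as in the norm estimate \eqref{equ:240914-e5}.
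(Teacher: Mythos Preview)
Your plan via the Green representation, splitting into balls around the spikes, Taylor expanding $G(\cdot,x)$ at $x_{j,\rho}^{(l)}$, and rescaling is precisely the argument of \cite[Proposition~4.1]{CLP} that the paper invokes and omits. One small correction: the logarithm in the second-order Taylor remainder $\int_{B_d}|y-x_{j,\rho}^{(l)}|^2\,U_{x_{j,\rho}^{(l)},\mu_{j,\rho}^{(l)}}^{\frac{N+2}{N-2}}\,dy = O\big(\bar\mu_\rho^{-(N+2)/2}\log\bar\mu_\rho\big)$ is present in every dimension (after rescaling the integrand decays like $|z|^{-N}$), not only at $N=6$, but it is absorbed exactly as you indicate.
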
	
	
	\begin{proof}
	The proof is  similar to the one for \cite[Proposition 4.1]{CLP}, so we omit it here.  
\end{proof}
	

\begin{proposition}
	For $\xi_\rho$ defined by \eqref{equ:241009-e2}, we have the following local Pohozaev identities:	
\end{proposition}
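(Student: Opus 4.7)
The plan is to derive two families of local Pohozaev identities for $\xi_\rho$ on each ball $B_d(x_{j,\rho}^{(1)})$ (with $d>0$ small and fixed, so that $B_d(x_{j,\rho}^{(1)})$ isolates the $j$-th spike): one from translation invariance (test function $\partial_i u$, yielding $N$ scalar identities for each $j$) and one from dilation invariance (test function $(x-x_{j,\rho}^{(1)})\cdot\nabla u+\tfrac{N-2}{2}u$, yielding one scalar identity for each $j$). The starting point is the classical Pohozaev calculation applied separately to $u_\rho^{(1)}$ and $u_\rho^{(2)}$, followed by subtraction and normalization by $\|u_\rho^{(1)}-u_\rho^{(2)}\|_{L^\infty(\Omega)}$.

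Concretely, I would multiply the equation $-\Delta u_\rho^{(l)}=(u_\rho^{(l)})^{2^*-1}+\lambda_\rho^{(l)}u_\rho^{(l)}$ by the appropriate test function and integrate over $B_d(x_{j,\rho}^{(1)})$ for $l=1,2$. Since the nonlinearity is autonomous, each identity has the form $(\text{boundary integrand})=0$. Taking the difference of the two and dividing by $\|u_\rho^{(1)}-u_\rho^{(2)}\|_{L^\infty(\Omega)}$, one uses the algebraic splittings
\begin{align*}
|\nabla u_\rho^{(1)}|^2-|\nabla u_\rho^{(2)}|^2 &= \nabla(u_\rho^{(1)}+u_\rho^{(2)})\cdot\nabla(u_\rho^{(1)}-u_\rho^{(2)}),\\
\lambda_\rho^{(1)}(u_\rho^{(1)})^2-\lambda_\rho^{(2)}(u_\rho^{(2)})^2 &= \lambda_\rho^{(1)}(u_\rho^{(1)}+u_\rho^{(2)})(u_\rho^{(1)}-u_\rho^{(2)})+(\lambda_\rho^{(1)}-\lambda_\rho^{(2)})(u_\rho^{(2)})^2,
\end{align*}
together with the mean-value formula hidden in $C_\rho$, to reorganize every surface integrand into terms that depend linearly on $\xi_\rho$ (plus a clean piece containing the factor $\frac{\lambda_\rho^{(1)}-\lambda_\rho^{(2)}}{\|u_\rho^{(1)}-u_\rho^{(2)}\|_{L^\infty}}$).

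The crucial simplification is to move the $\lambda_\rho^{(1)}-\lambda_\rho^{(2)}$ contribution back into the ball: for the translation identity, the divergence theorem gives
\begin{equation*}
\int_{\partial B_d(x_{j,\rho}^{(1)})}(u_\rho^{(2)})^2\nu_i\,d\sigma = 2\int_{B_d(x_{j,\rho}^{(1)})}u_\rho^{(2)}\partial_i u_\rho^{(2)}\,dx,
\end{equation*}
and recalling $g_\rho=\tfrac{\lambda_\rho^{(1)}-\lambda_\rho^{(2)}}{\|u_\rho^{(1)}-u_\rho^{(2)}\|_{L^\infty}}u_\rho^{(2)}$, this produces a source $\int_{B_d(x_{j,\rho}^{(1)})} g_\rho\,\partial_i u_\rho^{(2)}\,dx$ on the right-hand side; an analogous manipulation (using $\int (u_\rho^{(2)})^2(x-x_{j,\rho}^{(1)})\cdot\nu\,d\sigma$) converts the dilation boundary term into $\int g_\rho\bigl[(x-x_{j,\rho}^{(1)})\cdot\nabla u_\rho^{(2)}+\tfrac{N-2}{2}u_\rho^{(2)}\bigr]dx$. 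The outcome is a pair of identities in which the LHS is a boundary integral on $\partial B_d(x_{j,\rho}^{(1)})$ involving only $u_\rho^{(l)}$ and $\xi_\rho$ (ready to be estimated via the $C^1$ expansions \eqref{equ:241106-e1} and \eqref{equ:241017-e6}), while the RHS is an interior integral carrying the $g_\rho$-information.

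The main obstacle will be the careful bookkeeping of the nonlinear piece: the difference $(u_\rho^{(1)})^{2^*}-(u_\rho^{(2)})^{2^*}$ involves $C_\rho(x)\xi_\rho$, which in the $N=6$ boundary case where $2^*-2=1$ is linear, but for general $N\geq 6$ one must control the error $C_\rho(x)-(2^*-1)(u_\rho^{(l)})^{2^*-2}$ using the bounds \eqref{equ:241015-e7} on $|x_{j,\rho}^{(1)}-x_{j,\rho}^{(2)}|,|\mu_{j,\rho}^{(1)}-\mu_{j,\rho}^{(2)}|$ together with \eqref{equ:241114-e4} and \eqref{equ:241031-e2}; this ensures that the surface identity, once written in terms of $\xi_\rho$, has the advertised form up to admissible remainders. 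No interior integrals remain other than those explicitly involving $g_\rho$, because the full Pohozaev identities for $u_\rho^{(l)}$ are free of interior contributions.
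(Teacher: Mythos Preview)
Your plan follows the same underlying idea as the paper --- write the classical local Pohozaev identities for $u_\rho^{(1)}$ and $u_\rho^{(2)}$, subtract, and divide by $\|u_\rho^{(1)}-u_\rho^{(2)}\|_{L^\infty}$ --- but you over-engineer the derivation and, in doing so, introduce an actual error. The paper's proof is one line: apply the already-recorded identities \eqref{equ:241105-e3}--\eqref{equ:241105-e4} to $u_\rho^{(l)}$ for $l=1,2$ and take the difference. The resulting relations \eqref{equ:241105-e1}--\eqref{equ:241105-e2} are \emph{exact}; no approximation, no control of $C_\rho(x)-(2^*-1)(u_\rho^{(l)})^{2^*-2}$, and no divergence-theorem conversion of the $(\lambda_\rho^{(1)}-\lambda_\rho^{(2)})$ boundary term into interior integrals is needed at this stage. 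Your last paragraph confuses the derivation of the identities with their later estimation.

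The genuine gap is your assertion that ``the full Pohozaev identities for $u_\rho^{(l)}$ are free of interior contributions.'' This is true for the translation identity \eqref{equ:241105-e3} but false for the dilation identity \eqref{equ:241105-e4}: the term $\lambda_\rho u_\rho^2$ is not scale-critical, so \eqref{equ:241105-e4} carries the interior piece $-\lambda_\rho\int_{\Omega'}u_\rho^2$. Upon subtracting and splitting $\lambda_\rho^{(1)}(u_\rho^{(1)})^2-\lambda_\rho^{(2)}(u_\rho^{(2)})^2$, this produces in \eqref{equ:241105-e2} the interior term $-\lambda_\rho^{(2)}\int_{\Omega'}(u_\rho^{(1)}+u_\rho^{(2)})\xi_\rho$, which does \emph{not} involve $g_\rho$ and which is in fact the leading contribution on the right-hand side in the subsequent analysis (it gives the $\tfrac{2Bc_{j,0}\rho}{(\mu_{j,\rho}^{(1)})^{(N+2)/2}}$ term later). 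Your proposed identity omits this term entirely, so the dilation identity you would obtain is incorrect and would not support the argument that follows.
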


\begin{align}\label{equ:241105-e1}
	&	-\int_{\partial \Omega^{\prime}} \frac{\partial \xi_{\rho}}{\partial \nu} \frac{\partial u_{\rho}^{(1)}}{\partial x_i}-\int_{\partial \Omega^{\prime}} \frac{\partial u_{\rho}^{(2)}}{\partial \nu} \frac{\partial \xi_{\rho}}{\partial x_i}+\frac{1}{2} \int_{\partial \Omega^{\prime}}\left\langle\nabla\left(u_{\rho}^{(1)}+u_{\rho}^{(2)}\right), \nabla \xi_{\rho}\right\rangle \nu_i \\ \nonumber
	&	=\frac{N-2}{2 N} \int_{\partial \Omega^{\prime}} D_{\rho}(x) \xi_{\rho} \nu_i+  \frac{ \lambda^{(1)}_{\rho}  }{2} \int_{\partial \Omega^{\prime}}\left(u_{\rho}^{(1)}+u_{\rho}^{(2)}\right) \xi_{\rho} \nu_i+ 	\frac{  (\lambda_\rho^{(1)}-\lambda_\rho^{(2)}  )}{2\| u_\rho^{(1)}- u_\rho^{(2)} \|_{L^{\infty}}}  
		\int_{\partial \Omega^{\prime}}\left(u_{\rho}^{(1)}\right)^2 \cdot \nu_i,
\end{align}
and
\begin{equation}\label{equ:241105-e2}
	\begin{aligned}
		& \frac{1}{2} \int_{\partial \Omega^{\prime}}\left\langle\nabla\left(u_{\rho}^{(1)}+u_{\rho}^{(2)}\right), \nabla \xi_{\rho}\right\rangle\left\langle x-x_{j, \rho}^{(1)}, \nu\right\rangle-\int_{\partial \Omega^{\prime}} \frac{\partial \xi_{\rho}}{\partial \nu}\left\langle x-x_{j, \rho}^{(1)}, \nabla u_{\rho}^{(1)}\right\rangle \\
		& \quad-\int_{\partial \Omega^{\prime}} \frac{\partial u_{\rho}^{(2)}}{\partial \nu}\left\langle x-x_{j, \rho}^{(1)}, \nabla \xi_{\rho}\right\rangle+\frac{2-N}{2} \int_{\partial \Omega^{\prime}} \frac{\partial \xi_{\rho}}{\partial \nu} u_{\rho}^{(1)}+\frac{2-N}{2} \int_{\partial \Omega^{\prime}} \frac{\partial u_{\rho}^{(2)}}{\partial \nu} \xi_{\rho} \\
		& =\int_{\partial \Omega^{\prime}} D_{\rho}(x) \xi_{\rho}\left\langle x-x_{j, \rho}^{(1)}, \nu\right\rangle+      \frac{ \lambda^{(2)}_{\rho}   }{2} \int_{\partial \Omega^{\prime}}\left(u_{\rho}^{(1)}+u_{\rho}^{(2)}\right) \xi_{\rho}\left\langle x-x_{j, \rho}^{(1)}, \nu\right\rangle -\lambda^{(2)}_{\rho} \int_{\Omega^{\prime}}\left(u_{\rho}^{(1)}+u_{\rho}^{(2)}\right) \xi_{\rho}\\
		& -	\frac{  (\lambda_\rho^{(1)}-\lambda_\rho^{(2)}  )}{2\| u_\rho^{(1)}- u_\rho^{(2)} \|_{L^{\infty}}}  
		\int_{\partial \Omega^{\prime}}\left(u_{\rho}^{(1)}\right)^2 \xi_{\rho}\left\langle x-x_{j, \rho}^{(1)}, \nu\right\rangle - 	\frac{  (\lambda_\rho^{(1)}-\lambda_\rho^{(2)}  )}{\| u_\rho^{(1)}- u_\rho^{(2)} \|_{L^{\infty}}}  
		\int_{ \Omega^{\prime}}\left(u_{\rho}^{(1)}\right)^2.
	\end{aligned}
\end{equation}		
where $\Omega^{\prime} \subset \Omega$ is a smooth domain, $\nu(x)=\left(\nu_1(x), \cdots, \nu_N(x)\right)$ is the outward unit normal of $\partial \Omega^{\prime}$ and
$$
D_{\rho}(x)=\int_0^1\left(t u_{\rho}^{(1)}(x)+(1-t) u_{\rho}^{(2)}(x)\right)^{\frac{N+2}{N-2}} dt.
$$
\begin{proof}
	Taking $u_\rho = u^{(l)}_\rho$   with $l=1,2  $ in \eqref{equ:241105-e3} and \eqref{equ:241105-e4}, and then making a difference
	between those respectively, we can obtain \eqref{equ:241105-e1} and \eqref{equ:241105-e2}. 
\end{proof}

\begin{proposition}
	For $N \geq 6$, it holds
	\begin{align}\label{equ:241107-e2}
		c_ {j,0}=0, \quad \text{for}~j=1,...,k,
	\end{align}
	where $c_ {j,0}$ are the constants in Lemma \ref{lm:241103-l1}.
\end{proposition}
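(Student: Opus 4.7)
The strategy is to apply the dilation-type Pohozaev identity \eqref{equ:241105-e2} on the ball $\Omega'=B_d(x_{j,\rho}^{(1)})$, which is the natural identity for extracting information about the scaling coefficient $c_{j,0}$. On the boundary $\partial B_d(x_{j,\rho}^{(1)})$ I would substitute the outer asymptotic expansions \eqref{equ:241106-e1} for $u_{\rho}^{(l)}$ and \eqref{equ:241017-e6} for $\xi_{\rho}$, then Taylor expand $G(x_{j,\rho}^{(1)},\cdot)$ and $G(x_{i,\rho}^{(1)},\cdot)$ (for $i\ne j$) around $x_{j,\rho}^{(1)}$, so that the Robin function $R(a_j)$ and the cross Green's function values $G(a_i,a_j)$ enter the leading order. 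The boundary integral of $D_\rho\xi_\rho\langle x-x_{j,\rho}^{(1)},\nu\rangle$ on the right-hand side is handled analogously. After these reductions the LHS becomes a linear combination of the coefficients $A_{\rho,j}$ and $B_{\rho,j,i}$ from \eqref{equ:241016-e1} multiplied by quantities that involve the partial derivatives of $\Psi_k$ at $(a^k,\mu^k)$.

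The key preparatory calculation is to evaluate $A_{\rho,j}$ to leading order. After the change of variables $x=y/\mu_{j,\rho}^{(1)}+x_{j,\rho}^{(1)}$, and inserting \eqref{equ:241106-e4} for $C_\rho$ together with Lemma \ref{lm:241103-l1} for $\xi_{\rho,j}$, the orthogonality of $U_{0,1}^{4/(N-2)}$ against odd functions eliminates the contributions of $c_{j,i}$ for $i\ge 1$, leaving
\[
A_{\rho,j}\;=\;c_{j,0}\,(\mu_{j,\rho}^{(1)})^{2-N}\,\tfrac{N+2}{N-2}\!\int_{\mathbb{R}^N}U_{0,1}^{4/(N-2)}\psi_0\,dy+\text{lower order},
\]
with the factor $\int U_{0,1}^{4/(N-2)}\psi_0\,dy=-\tfrac{(N-2)^2}{2(N+2)}\int U_{0,1}^{(N+2)/(N-2)}\neq 0$ obtained by a direct computation using $\psi_0=\tfrac{N-2}{2}U_{0,1}+x\cdot\nabla U_{0,1}$ and integration by parts. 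Similarly the $B_{\rho,j,i}$ pick out only the translation coefficients $c_{j,i}$, $i\ge 1$. Thus $c_{j,0}$ enters the reduced identity precisely through $A_{\rho,j}$.

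The genuinely new difficulty, not present in the pure Br\'ezis--Nirenberg analysis of \cite{CLP}, is to control the two terms in \eqref{equ:241105-e2} carrying the factor $(\lambda_\rho^{(1)}-\lambda_\rho^{(2)})/\|u_\rho^{(1)}-u_\rho^{(2)}\|_{L^\infty}$. I would insert the expansion \eqref{equ:241031-e3}, which writes this quotient as $\rho^{-1}$ times $\sum_j(\mu_{j,\rho}^{(1)})^{-(N-2)/2}\int_{\mathbb{R}^N}U_{0,1}^{(N+2)/(N-2)}\xi_{\rho,j}\,dx$ plus a harmless remainder. The pivotal observation is the kernel orthogonality \eqref{equ:241103-e2} proved in Lemma \ref{lm:241114-l3}, which forces $\int_{\mathbb{R}^N}U_{0,1}^{(N+2)/(N-2)}\xi_j\,dx=0$; combined with the uniform convergence $\xi_{\rho,j}\to\xi_j$ of Lemma \ref{lm:241103-l1} and the $|y|^{-(N+2)}$ decay of $U_{0,1}^{(N+2)/(N-2)}$, this upgrades the quotient to an order strictly smaller than a naive reading of $\rho^{-1}$ would suggest (recall $\rho\sim\bar\mu_\rho^{-2}$ from \eqref{eq1.12}). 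Consequently both multiplier-difference terms are absorbed into the error of the identity.

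Collecting all contributions, the Pohozaev identity reduces to a relation of the form $\Lambda_j c_{j,0}=o(\bar\mu_\rho^{-N})$, where the coefficient $\Lambda_j$ is the leading prefactor computed above multiplied by a quantity that, thanks to the critical point condition $\nabla_\mu\Psi_k(a^k,\mu^k)=0$ and the nondegeneracy hypothesis, remains nonzero. This yields $c_{j,0}=0$ for every $j=1,\ldots,k$. The hardest step is the sharp bookkeeping of orders for the multiplier-difference terms: one must exploit precisely the kernel orthogonality from Lemma \ref{lm:241114-l3} to defeat the $\rho^{-1}\sim\bar\mu_\rho^{2}$ factor, which is exactly the novel observation on the kernel of the linearized operator promised in the abstract.
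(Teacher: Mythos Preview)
Your overall strategy coincides with the paper's: apply the dilation Pohozaev identity \eqref{equ:241105-e2} on a ball around $x_{j,\rho}^{(1)}$, insert the outer expansions \eqref{equ:241017-e6} and \eqref{equ:241106-e1} on the boundary, evaluate $A_{\rho,j}$ via \eqref{equ:241106-e4} and Lemma~\ref{lm:241103-l1} (this is exactly \eqref{equ:241106-e5}), and neutralize the two multiplier-difference terms by feeding \eqref{equ:241031-e3} through the kernel orthogonality \eqref{equ:241103-e2} of Lemma~\ref{lm:241114-l3} (this is precisely \eqref{equ:241106-e14}).

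The gap is in your endgame. The reduced relation is \emph{not} a scalar equation $\Lambda_j c_{j,0}=o(\bar\mu_\rho^{-N})$. After substituting the outer expansions, the $j$-th identity picks up \emph{every} $A_{\rho,l}$, hence every $c_{l,0}$, through the cross values $G(x_{j,\rho}^{(1)},x_{l,\rho}^{(1)})$ appearing in the bilinear forms $P_1\big(G(x_{m,\rho}^{(1)},\cdot),G(x_{l,\rho}^{(1)},\cdot)\big)$ of \eqref{equ:241106-e2}; in addition, the volume term $-\lambda_\rho^{(2)}\int_{\Omega'}(u_\rho^{(1)}+u_\rho^{(2)})\xi_\rho$ on the RHS contributes another leading piece proportional to $c_{j,0}$ (via $\int U_{0,1}\psi_0\neq 0$ and the criticality relation \eqref{equ:241106-e7}) that partially cancels the LHS. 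What one actually obtains is the coupled $k\times k$ linear system \eqref{equ:241106-e13} in $(c_{1,0},\ldots,c_{k,0})$. The mechanism that closes the argument is the invertibility of the explicit matrix $\bar M_{k,\rho}$, established by strict diagonal dominance, and this is exactly where the restriction $N\geq 6$ enters. Your appeal to the nondegeneracy of $\Psi_k$ is misplaced here: that hypothesis is what drives the \emph{next} proposition (for $c_{j,i}$ with $i\geq 1$, via \eqref{equ:241106-e23}), not this one, so for $k\geq 2$ your argument as written does not close.
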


\begin{proof}

 First, we define the following quadric form
\begin{equation*}
	\begin{aligned}
		P_1(u, v)= & -\theta \int_{\partial B_\theta\left(x_{j, \rho}^{(1)}\right)}\langle\nabla u, \nu\rangle\langle\nabla v, \nu\rangle+\frac{\theta}{2} \int_{\partial B_\theta\left(x_{j, \rho}^{(1)}\right)}\langle\nabla u, \nabla v\rangle \\
		& +\frac{2-N}{4} \int_{\partial B_\theta\left(x_{j, \rho}^{(1)}\right)}\langle\nabla u, \nu\rangle v+\frac{2-N}{4} \int_{\partial B_\theta\left(x_{j, \rho}^{(1)}\right)}\langle\nabla v, \nu\rangle u .
	\end{aligned}
\end{equation*}

Note that if $u$ and $v$ are harmonic in $B_d(x_{j, \rho}^{(1)}) \backslash\{x_{j, \rho}^{(1)}\}$, then $P_1(u, v)$ is independent of $\theta \in(0, d]$. So using \eqref{equ:241017-e6} and \eqref{equ:241106-e1}, we get by taking $\Omega^{\prime}=B_\theta(x_{j, \rho}^{(1)})$ in \eqref{equ:241105-e2}, for $N \geq 6$,
\begin{equation}\label{equ:241106-e6}
\begin{aligned}
		\text { LHS of \eqref{equ:241105-e2} }= & \sum_{l=1}^k \sum_{m=1}^k \frac{2 A A_{\rho, l}}{(\mu_{m, \rho}^{(1)})^{(N-2) / 2}} P_1\left(G(x_{m, \rho}^{(1)}, x), G(x_{l, \rho}^{(1)}, x)\right) \\
		& +\sum_{l=1}^k \sum_{m=1}^k \sum_{h=1}^N \frac{2 A B_{\rho, l, h}}{(\mu_{m, \rho}^{(1)})^{(N-2) / 2}} P_1\left(G(x_{m, \rho}^{(1)}, x), \partial_h G(x_{l, \rho}^{(1)}, x)\right) \\
		& +O\big(\frac{1}{\bar{\mu}_{\rho}^{(3 N-2) / 2}}\big) .
\end{aligned}
\end{equation}
  A very similar calculation progress as the Section 5 in \cite{CLP}, can be used to show \eqref{equ:241106-e2} and \eqref{equ:241106-e3}. So we omit it here.
Thus, from Section 5 in \cite{CLP}, we have the following estimate:
\begin{equation}\label{equ:241106-e2}
	P_1\left(G(x_{m, \rho}^{(1)}, x), G(x_{l, \rho}^{(1)}, x)\right)= \begin{cases}-\frac{(N-2) R(x_{j, \rho}^{(1)})}{2}, & \text { for } l, m=j .  \vspace{3mm} \\  \vspace{3mm}
		\frac{(N-2) G(x_{j, \rho}^{(1)}, x_{l, \rho}^{(1)})}{4}, & \text { for } m=j, l \neq j . \\ \vspace{3mm}
		\frac{(N-2) G(x_{j, \rho}^{(1)}, x_{m, \rho}^{(1)})}{4}, & \text { for } m \neq j, l=j . \\  
		0, & \text { for } l, m \neq j .\end{cases}
\end{equation}
	and
\begin{equation}\label{equ:241106-e3}
	P_1\left(G(x_{m, \rho}^{(1)}, x), \partial_h G(x_{l, \rho}^{(1)}, x)\right)= \begin{cases}\left(\frac{N-2}{4}+\frac{1-N}{2 N}\right) \partial_h R(x_{j, \rho}^{(1)}), & \text { for } l, m=j . \vspace{3mm} \\  \vspace{3mm}
		 \frac{(N-2)}{4} \partial_h G(x_{l, \rho}^{(1)}, x_{j, \rho}^{(1)}), & \text { for } m=j, l \neq j . \\ \vspace{3mm}
		 
		  \left(\frac{N-2}{4}+\frac{1-N}{N}\right) \partial_h G(x_{j, \rho}^{(1)}, x_{m, \rho}^{(1)}), & \text { for } m \neq j, l=j . \\ 0, & \text { for } l, m \neq j .\end{cases}
\end{equation}
	
	On the other hand, from \eqref{equ:241008-e1},\eqref{equ:241015-e12},\eqref{equ:241106-e4},\eqref{equ:241016-e1} and 	\eqref{equ:241114-e4}, we deduce 
	\begin{align}\label{equ:241106-e5}
		A_{\rho, j}=\left(\frac{N+2}{N-2}\right) \int_{B_d\left(x_{j, \rho}^{(1)}\right)} U_{x_{j, \rho}^{(1)}, \mu_{j, \rho}^{(1)}}^{\frac{4}{N-2}} \xi_{\rho}+o(\frac{1}{\bar{\mu}_{\rho}^{N-1}})=-\frac{(N-2) A c_{j, 0}}{2(\mu_{j, \rho}^{(1)})^{N-2}}+o(\frac{1}{\bar{\mu}_{\rho}^{N-1}}) .
\end{align}

	Now let $d_{j, \rho}=\frac{(N-2) A^2 c_{j, 0}}{8\left(\mu_{j, \rho}^{(1)}\right)^{(N-2) / 2}}$, for $j=1, \cdots, k$. Then \eqref{equ:241106-e6}-\eqref{equ:241106-e5} imply
		\begin{align} \label{equ:241106-e9}
			\text {LHS of \eqref{equ:241105-e2} }= & 2(N-2) d_{j, \rho}\Big(\frac{R(x_{j, \rho}^{(1)})}{(\mu_{j, \rho}^{(1)})^{N-2}}-\sum_{l \neq j}^k \frac{G(x_{j, \rho}^{(1)}, x_{l, \rho}^{(1)})}{(\mu_{j, \rho}^{(1)})^{(N-2) / 2}(\mu_{l, \rho}^{(1)})^{(N-2) / 2}}\Big) \\ \nonumber
			& +2(N-2)\Big(\frac{d_{j, \rho} R(x_{j, \rho}^{(1)})}{(\mu_{j, \rho}^{(1)})^{N-2}}-\sum_{l \neq j}^k \frac{d_{l, \rho} G(x_{j, \rho}^{(1)}, x_{l, \rho}^{(1)})}{(\mu_{j, \rho}^{(1)})^{(N-2) / 2}(\mu_{l, \rho}^{(1)})^{(N-2) / 2}}\Big) \\ \nonumber
			& +\frac{N-2}{2} \sum_{h=1}^N B_{\rho, j, h} \Big(\frac{\partial_h R (x_{j, \rho}^{(1)})}{(\mu_{j, \rho}^{(1)})^{(N-2) / 2}}-\sum_{l \neq j}^k \frac{\partial_h G (x_{j, \rho}^{(1)}, x_{l, \rho}^{(1)})}{  (\mu_{l, \rho}^{(1)})^{ (N-2) / 2}      }  \Big) \\ \nonumber
			& -\frac{N-2}{2} \sum_{h=1}^N \sum_{l \neq j}^k \frac{B_{\rho, l, h}}{(\mu_{j, \rho}^{(1)})^{(N-2) / 2}} \partial_h G(x_{j, \rho}^{(1)}, x_{l, \rho}^{(1)})+o \big (\frac{1}{\bar{\mu}_\rho^{(3 N-4) / 2}}  \big) \\ \nonumber
			& +\frac{(N-1)}{N} \sum_{h=1}^N B_{\rho, j, h}\Big(\frac{\partial_h R(x_{j, \rho}^{(1)})}{(\mu_{j, \rho}^{(1)})^{N-2}}-2 \sum_{l \neq j}^k \frac{\partial_h G\left(x_{j, \rho}^{(1)}, x_{l, \rho}^{(1)}\right)}{(\mu_{j, \rho}^{(1)})^{(N-2) / 2}(\mu_{l, \rho}^{(1)})^{(N-2) / 2}}\Big) .
		\end{align}	
		
		Moreover, in view of  \cite[Proposition 2.5]{CLP},  we can notice that 
		\begin{align}\label{equ:241106-e7}
			\frac{R(x_{j, \rho}^{(1)})}{(\mu_{j, \rho}^{(1)})^{N-2}}-\sum_{l \neq j}^k \frac{G(x_{j, \rho}^{(1)}, x_{l, \rho}^{(1)})}{(\mu_{j, \rho}^{(1)})^{(N-2) / 2}(\mu_{l, \rho}^{(1)})^{(N-2) / 2}} \\ \nonumber
			& =\frac{2 B \lambda_\rho}{A^2(N-2) (\mu^{(1)}_{j, \rho})^2}+O\Big(\frac{1}{\bar{\mu}_{\rho}^N}+\frac{\lambda_\rho}{\bar{\mu}_{\rho}^{N-2}}\Big)+o\Big(\frac{\lambda_\rho}{\bar{\mu}_{\rho}^{(N+2) / 2}}+\frac{\lambda_\rho^2}{\bar{\mu}_{\rho}^4}\Big).
		\end{align}

In order to obtain the right hand of \eqref{equ:241105-e2}, we shall do the following estimates:

By  Lemma \ref{lm:241103-l1} and formulas \eqref{equ:241103-e2}, we  derive  that 
\begin{align}\label{equ:241106-e14}
	\frac{ \rho (\lambda_\rho^{(1)}-\lambda_\rho^{(2)}  )}{\| u_\rho^{(1)}- u_\rho^{(2)} \|_{L^{\infty}}}
	& = -\frac{4}{N-2} 	\frac{1}{(\mu_{ \rho}^{(1)})^{\frac{N-2}{2}}}  \int_{\mathbb{R}^N} \big [    U_{0, 1}(x)  \big]^{\frac{N+2}{N-2}}    \hat{\xi}_{\rho}(x)   dx +O(\frac{1}{\bar{\mu}_\rho^{(\frac{3N}{2}-5)}}) \\ \nonumber
	&=-\frac{4}{N-2} 	\frac{1}{(\mu_{ \rho}^{(1)})^{\frac{N-2}{2}}}  \int_{\mathbb{R}^N} \big [    U_{0, 1}(x)  \big]^{\frac{N+2}{N-2}}    \sum_{i=0}^N c_i \psi_i(x)  dx+o(\frac{1}{\bar{\mu}^{\frac{N }{2}}_\rho}) +O(\frac{1}{\bar{\mu}_\rho^{(\frac{3N}{2}-5)}})\\ \nonumber
	&=o(\frac{1}{\bar{\mu}^{\frac{N }{2}}_\rho}) +O(\frac{1}{\bar{\mu}_\rho^{(\frac{3N}{2}-5)}}).
\end{align}	

A direct computation shows the validity of the following:

In view of \eqref{equ:241106-e1} and \eqref{equ:241106-e14}, we have 
	\begin{align*}
			-	\frac{  (\lambda_\rho^{(1)}-\lambda_\rho^{(2)}  )}{2\| u_\rho^{(1)}- u_\rho^{(2)} \|_{L^{\infty}}}  
			\int_{\partial \Omega^{\prime}}\left(u_{\rho}^{(1)}\right)^2 \xi_{\rho}\left\langle x-x_{j, \rho}^{(1)}, \nu\right\rangle &= o(\frac{1}{\bar{\mu}^{N-1}_\rho}),
		\end{align*}		
and 	
			\begin{align*}
			- 	\frac{  (\lambda_\rho^{(1)}-\lambda_\rho^{(2)}  )}{\| u_\rho^{(1)}- u_\rho^{(2)} \|_{L^{\infty}}}  
			\int_{ \Omega^{\prime}}\left(u_{\rho}^{(1)}\right)^2&=o(\frac{1}{\bar{\mu}^{\frac{N }{2}}_\rho}) +O(\frac{1}{\bar{\mu}_\rho^{(\frac{3N}{2}-5)}}).
		\end{align*}
Also, from \eqref{equ:241106-e7}, \eqref{equ:241008-e1}, \eqref{equ:241031-e2} and 	\eqref{equ:241114-e4}, we know 
   \begin{align*}
	  	\text { RHS  of \eqref{equ:241105-e2} } & =-2 \lambda^{(2)}_\rho \int_{B_\theta\left(x_{j, \rho}^{(1)}\right)} U_{x_{j, \rho}^{(1)}, \mu_{j, \rho}^{(1)}}(x) \xi_{\rho}+o\Big(\frac{1}{\bar{\mu}_{\rho}^{(3 N-2) / 2}}\Big) +o(\frac{1}{\bar{\mu}^{\frac{N }{2}}_\rho}) +O(\frac{1}{\bar{\mu}_\rho^{(\frac{3N}{2}-5)}}) +o(\frac{1}{\bar{\mu}^{N-1}_\rho}) \\ \nonumber
		& =\frac{2 B c_{j, 0} \rho}{(\mu_{j, \rho}^{(1)})^{(N+2) / 2}} +o(\frac{1}{\bar{\mu}^{N-1}_\rho})  +o(\frac{1}{\bar{\mu}^{\frac{N }{2}}_\rho}) \\ \nonumber
		& =8 d_{j, \rho}\Big(\frac{R(x_{j, \rho}^{(1)})}{(\mu_{j, \rho}^{(1)})^{N-2}}-\sum_{l \neq j}^k \frac{G (x_{j, \rho}^{(1)}, x_{l, \rho}^{(1)})}{\big(\mu_{j, \rho}^{(1)}\big)^{(N-2) / 2}(\mu_{l, \rho}^{(1)})^{(N-2) / 2}}\Big)+o(\frac{1}{\bar{\mu}^{N-1}_\rho}) +o(\frac{1}{\bar{\mu}^{\frac{N }{2}}_\rho}).
	\end{align*}

	From \cite[Proposition 2.5]{CLP},  if $N \geq 6$, we have
	\begin{equation}\label{equ:241106-e8}
		\frac{1}{2 (\mu_{j, \rho}^{(1)})^{N-2}} \frac{\partial R(x_{j, \rho})}{\partial x_i}-\sum_{l=1, l \neq j}^k \frac{1}{(\mu_{j, \rho}^{(1)})^{(N-2) / 2} (\mu_{j, \rho}^{(1)})^{(N-2) / 2}} \frac{\partial G (x_{j, \rho}, x_{l, \rho})}{\partial x_i}= O(\frac{1}{\bar{\mu}_{\rho}^N})
	\end{equation}
	It follows from \eqref{equ:241106-e8} that 
              \begin{align}\label{equ:241106-e10}
				& \sum_{h=1}^k B_{\rho, j, h}\Big(\frac{\partial_h R(x_{j, \rho}^{(1)})}{(\mu_{j, \rho}^{(1)})^{N-2}}-2 \sum_{l \neq j}^k \frac{\partial_h G(x_{j, \rho}^{(1)}, x_{l, \rho}^{(1)})}{(\mu_{j, \rho}^{(1)})^{(N-2) / 2}(\mu_{l, \rho}^{(1)})^{(N-2) / 2}}\Big) \\ \nonumber
				& \quad=O\Big(\sum_{h=1}^k \frac{B_{\rho, j, h}}{(\mu_{j, \rho}^{(1)})^{N / 2}} \big(\partial_h \Psi_k (a^k, \mu^k)+o(1)\big)\Big)=o\Big(\frac{1}{\bar{\mu}_{\rho}^{(3 N-2) / 2}}\Big) .
			\end{align}

	Hence, \eqref{equ:241106-e6} and \eqref{equ:241106-e9}-\eqref{equ:241106-e10} imply, for  $j=1, \cdots, k$,
          	\begin{align}\label{equ:241106-e12}
				& (N-6) d_{j, \rho}\Big(\frac{R (x_{j, \rho}^{(1)})}{(\mu_{j, \rho}^{(1)} )^{N-2}}-\sum_{l \neq j}^k \frac{G(x_{j, \rho}^{(1)}, x_{l, \rho}^{(1)})}{(\mu_{j, \rho}^{(1)})^{(N-2) / 2}(\mu_{l, \rho}^{(1)})^{(N-2) / 2}}\Big) \\ \nonumber
				& +(N-2)\Big(\frac{d_{j, \rho} R(x_{j, \rho}^{(1)})}{(\mu_{j, \rho}^{(1)})^{N-2}}-\sum_{l \neq j}^k \frac{d_{l, \rho} G(x_{j, \rho}^{(1)}, x_{l, \rho}^{(1)})}{(\mu_{j, \rho}^{(1)})^{(N-2) / 2}(\mu_{l, \rho}^{(1)})^{(N-2) / 2}}\Big) \\ \nonumber
				& +\frac{N-2}{4} \sum_{h=1}^N B_{\rho, j, h}\Big(\frac{\partial_h R(x_{j, \rho}^{(1)})}{ (\mu_{j, \rho}^{(1)})^{(N-2) / 2}}-\sum_{l \neq j}^k \frac{\partial_h G(x_{j, \rho}^{(1)}, x_{l, \rho}^{(1)})}{(\mu_{l, \rho}^{(1)})^{(N-2) / 2}} \Big)\\ \nonumber
				& -\frac{N-2}{4} \sum_{h=1}^N \sum_{l \neq j}^k \frac{B_{\rho, l, h}}{(\mu_{j, \rho}^{(1)})^{(N-2) / 2}} \partial_h G\left(x_{j, \rho}^{(1)}, x_{l, \rho}^{(1)}\right) +o(\frac{1}{\bar{\mu}^{\frac{N }{2}}_\rho}) =o(\frac{1}{\bar{\mu}_{\rho}^{(N-1)}}).
			\end{align}
	Let $a^k=\left(a_1, \cdots, a_k\right) \in \Omega^k$ and $a_j=\left(y_{(j-1) N+1}, y_{(j-1) N+2}, \cdots, y_{j N}\right) \in \Omega$. Since for any $i=\{1, \cdots, k N\}$, there exists some $j \in\{1, \cdots, k\}$ satisfying $i \in[(j-1) N+$ $1, j N] \cap \mathbb{N}^{+}$. Then by direct calculation, we have
	\begin{align*}
		\frac{\partial^2 \Psi_k(x, \mu)}{\partial y_i \partial \mu_m} 	= & (N-2)\Big(\mu_j^{N-3} \frac{\partial R\left(x_j\right)}{\partial y_i}-\sum_{l \neq j}^k \mu_j^{\frac{N-4}{2}} \mu_l^{\frac{N-2}{2}} \frac{\partial G\left(x_j, x_l\right)}{\partial y_i}\Big), \text { if } m \in[(j-1) N+1, j N] \bigcap \mathbb{N}^{+},
	\end{align*}
	and
	\begin{align*}
	\frac{\partial^2 \Psi_k(x, \mu)}{\partial y_i \partial \mu_m}=-(N-2) \mu_j^{\frac{N-2}{2}} \mu_s^{\frac{N-4}{2}} \frac{\partial G\left(x_j, x_s\right)}{\partial y_i} \text {, if } m \in[(s-1) N+1, s N] \bigcap \mathbb{N}^{+} \text {and } s \neq j \text {. }
	\end{align*}	
	Now we rewrite \eqref{equ:241106-e12} as follows:
	\begin{align}\label{equ:241106-e13}
	\bar{M}_{k, \rho} \vec{D}_k+\frac{2}{A^2} \widetilde{D}_k  \Big[  \left(D_{\mu, x}^2 \Psi_k(x, \mu)\right)_{(x, \mu)  =\left(a^k, \mu^k \right)}   +o(\frac{1}{\bar{\mu}_\rho}) \Big]   \widetilde{B}_{\rho, k}=\Big(o(\frac{1}{\bar{\mu}_{\rho}^{N-1}}), \cdots, o(\frac{1}{\bar{\mu}_{\rho}^{N-1}})\Big)^T,
	\end{align}	
		with the vector $\vec{D}_k$, the matrix $\bar{M}_{k, \rho}=\left(a_{i, j, \rho}\right)_{1 \leq i, j \leq k}$ defined by
	\begin{align*}
		\vec{D}_k=\left(c_{1,0}, \cdots, c_{k, 0}\right)^T, \widetilde{D}_k=\operatorname{diag}\left(\mu_1^{\frac{4-N}{2}}, \cdots, \mu_k^{\frac{4-N}{2}}\right),  ~\text{ with} \quad  \mu_j:=\lim\limits _{\rho \rightarrow 0}(\lambda_\rho^{\frac{1}{N-4}} \mu_{j, \rho})^{-1},
	\end{align*}
	
	\begin{equation*}
		a_{i, j,\rho}= \begin{cases}\frac{2(N-4) R\left(x_{j,\rho}^{(1)}\right)}{\left(\mu_{j,\rho}^{(1)}\right)^{N-2}}-\sum_{l \neq j}^k \frac{(N-6) G\left(x_{j,\rho}^{(1)}, x_{l,\rho}^{(1)}\right)}{\left(\mu_{j,\rho}^{(1)}\right)^{(N-2) / 2}\left(\mu_{l,\rho}^{(1)}\right)^{(N-2) / 2}}, & \text { for } i=j, \\ -\frac{(N-2) G\left(x_{j,\rho}^{(1)}, x_{i,\rho}^{(1)}\right)}{\left(\mu_{j,\rho}^{(1)}\right)^{(N-2) / 2}\left(\mu_{i,\rho}^{(1)}\right)^{(N-2) / 2},} & \text { for } i \neq j,\end{cases}
	\end{equation*}
	and
	\begin{align}\label{equ:241106-e15}
	\widetilde{B}_{\rho, k}=\left(\bar{B}_{\rho, 1}, \cdots, \bar{B}_{\rho, N k}\right)^T, \text { with }~ \bar{B}_{\rho, m}=B_{\rho, j, h},~ m=h+N(j-1) .
\end{align}

	Since $\bar{M}_{k, \rho}$ is the main diagonally dominant matrix if $N \geq 6$, we see that $\bar{M}_{k, \rho}$ is invertible. So \eqref{equ:241106-e13} means \eqref{equ:241107-e2}. Then from \eqref{equ:241106-e5} and \eqref{equ:241106-e13}, we obtain
	\begin{align}\label{equ:241106-e16}
		A_{\rho, j}=o\big(\frac{1}{\bar{\mu}_{\rho}^{N-1}}\big) \quad \text{for} \quad j=1, \cdots, k \quad \text{and} \quad N \geq 6.
	\end{align}
	Therefore we find 
	\begin{align}\label{equ:241106-e21}
		&\left(D_{\mu, x}^2 \Psi_k(x, \mu)\right)_{(x, \mu)=\big(a^k, \mu^k \big)} \widetilde{B}_{\rho, k}=\Big (o(\frac{1}{\bar{\mu}_{\rho}^{N-1}}), \cdots, o(\frac{1}{\bar{\mu}_{\rho}^{N-1}})\Big)^T .
	\end{align}

\end{proof}

\begin{proposition}
	For $N \geq 6$, it holds
	\begin{align}\label{equ:241106-e26}
		c_ {j,i}=0, \quad \text{for}~j=1,...,k,~ \text{and} \quad  i = 1,... ,N,
	\end{align}
	where $c_ {j,i}$ are the constants in Lemma \ref{lm:241103-l1}.
\end{proposition}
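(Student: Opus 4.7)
The plan is to run an argument parallel to the previous proposition, but using the translation-type local Pohozaev identity \eqref{equ:241105-e1} in place of the dilation-type identity \eqref{equ:241105-e2}. The output will be a second linear system whose leading-order coefficient matrix recovers the $x$-$x$ and $x$-$\mu$ blocks of the Hessian $D^2\Psi_k(a^k,\mu^k)$, and whose unknown is $\widetilde{B}_{\rho,k}$ (which, after a non-degenerate rescaling, encodes the coefficients $c_{j,i}$ for $i=1,\dots,N$). Together with \eqref{equ:241106-e21} and the nondegeneracy assumption on $(a^k,\mu^k)$, this will force $\widetilde{B}_{\rho,k}$ to be small, hence $c_{j,i}=0$.

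First I would introduce a boundary bilinear form $P_2(u,v)$ analogous to $P_1$, tailored to the LHS of \eqref{equ:241105-e1}, with the property that $P_2(u,v)$ is independent of $\theta\in(0,d]$ whenever $u,v$ are harmonic on $B_d(x_{j,\rho}^{(1)})\setminus\{x_{j,\rho}^{(1)}\}$. Using the exterior expansions \eqref{equ:241017-e6} for $\xi_\rho$ and \eqref{equ:241106-e1} for $u_\rho^{(l)}$, together with $c_{j,0}=0$ from \eqref{equ:241107-e2} and the consequent bound $A_{\rho,j}=o(\bar{\mu}_\rho^{-(N-1)})$ from \eqref{equ:241106-e16}, the boundary integrals on the LHS reduce to explicit values $P_2\big(G(x_{m,\rho}^{(1)},\cdot),G(x_{l,\rho}^{(1)},\cdot)\big)$ and $P_2\big(G(x_{m,\rho}^{(1)},\cdot),\partial_h G(x_{l,\rho}^{(1)},\cdot)\big)$, which reassemble into the $D_{xx}^2\Psi_k$ and $D_{x\mu}^2\Psi_k$ blocks at $(a^k,\mu^k)$. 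On the RHS, the dominant contribution $\int_{\partial B_\theta} D_\rho\xi_\rho\nu_i$ is evaluated by the blow-up substitution of Lemma \ref{lm:241103-l1}: the constant piece of $\xi_{\rho,j}\to\sum c_{j,i}\psi_i$ is killed by antisymmetry of $\nu_i$ against the even profile $U_{0,1}$, so only the first-moment term survives, producing a contribution proportional to $B_{\rho,j,h}$. The $\lambda_\rho^{(1)}$-term is controlled as in the previous proposition, and the Lagrange-multiplier correction is bounded via \eqref{equ:241106-e14} by $o(\bar{\mu}_\rho^{-N/2})+O(\bar{\mu}_\rho^{-(3N-5)/2})$, which is absorbed into the remainder for $N\ge 6$.

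Collecting terms yields, in parallel with \eqref{equ:241106-e13}, an identity of the form
\begin{equation*}
\big[(D_{xx}^2 \Psi_k)(a^k,\mu^k)+o(1)\big]\widetilde{B}_{\rho,k}+(D_{x\mu}^2\Psi_k)(a^k,\mu^k)\,\vec{D}_k=\big(o(\bar{\mu}_\rho^{-(N-1)}),\dots,o(\bar{\mu}_\rho^{-(N-1)})\big)^T.
\end{equation*}
Since $\vec{D}_k=0$ by \eqref{equ:241107-e2}, this collapses to $(D_{xx}^2\Psi_k)(a^k,\mu^k)\widetilde{B}_{\rho,k}=o(\bar{\mu}_\rho^{-(N-1)})$. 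Combined with \eqref{equ:241106-e21}, which gives $(D_{\mu x}^2\Psi_k)(a^k,\mu^k)\widetilde{B}_{\rho,k}=o(\bar{\mu}_\rho^{-(N-1)})$, the rectangular map $\big(D_{xx}^2\Psi_k,D_{\mu x}^2\Psi_k\big)^T$ applied to $\widetilde{B}_{\rho,k}$ is of size $o(\bar{\mu}_\rho^{-(N-1)})$. Nondegeneracy of $(a^k,\mu^k)$ as a critical point of $\Psi_k$ means the full Hessian is invertible; in particular this $((kN+k)\times kN)$-matrix has full column rank, forcing $\widetilde{B}_{\rho,k}=o(\bar{\mu}_\rho^{-(N-1)})$. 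Translating through the relation $B_{\rho,j,h}=\kappa_h\,c_{j,h}/\mu_{j,\rho}^{(N-2)/2}+o(\bar{\mu}_\rho^{-(N-1)})$, with $\kappa_h:=\tfrac{N+2}{N-2}\int_{\mathbb{R}^N} U_{0,1}^{4/(N-2)}(y)\,y_h\psi_h(y)\,dy\ne 0$, then gives \eqref{equ:241106-e26}.

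The main obstacle is the Lagrange-multiplier correction $g_\rho$: unlike in the dilation identity, where it contributed a leading-order bulk integral tied to $c_{j,0}$, here one must check that the boundary contributions from $g_\rho$ against $\nu_i$ cancel at leading order by antisymmetry against $U_{0,1}$, leaving only a remainder compatible with $N\ge 6$. The other delicate point is the explicit evaluation of the mixed $P_2\big(G,\partial_h G\big)$ quadrics, whose values must be shown to produce the off-diagonal entries of $D^2\Psi_k$ with the correct normalization and sign; this is lengthy but routine bookkeeping, modeled strictly on Section 5 of \cite{CLP}.
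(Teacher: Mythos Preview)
Your overall strategy matches the paper's proof: it introduces the quadratic form (called $Q_1$ there) for the LHS of \eqref{equ:241105-e1}, expands via \eqref{equ:241017-e6} and \eqref{equ:241106-e1}, kills the $A_{\rho,j}$-contributions using \eqref{equ:241106-e16}, evaluates $Q_1(G,\partial_hG)$ to produce the $D_{xx}^2\Psi_k$ block, shows the RHS is $o(\bar\mu_\rho^{-(N-1)})$, combines with \eqref{equ:241106-e21} and the full-column-rank of $\big(D_{xx}^2\Psi_k,D_{\mu x}^2\Psi_k\big)^T$ to get $B_{\rho,j,h}=o(\bar\mu_\rho^{-(N-1)})$, and finally reads off $c_{j,h}=0$.

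Two small inaccuracies in your description are worth flagging. First, your treatment of the RHS is muddled: all three integrals in \eqref{equ:241105-e1} are \emph{boundary} integrals on a \emph{fixed} sphere $\partial B_\theta(x_{j,\rho}^{(1)})$, so no blow-up substitution is needed. The term $\int_{\partial B_\theta}D_\rho\xi_\rho\nu_i$ is bounded directly by the pointwise estimates $D_\rho=O(\bar\mu_\rho^{-(N+2)/2})$ and $\xi_\rho=O(\bar\mu_\rho^{-(N-2)})$ from \eqref{equ:241008-e1}, giving $O(\bar\mu_\rho^{-(3N-2)/2})$; it does not produce any ``first-moment'' contribution proportional to $B_{\rho,j,h}$. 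Second, the scaling in your final relation is off by a large factor: the correct expression is $B_{\rho,j,h}=\kappa\, c_{j,h}\,(\mu_{j,\rho}^{(1)})^{-(N-1)}+o(\bar\mu_\rho^{-(N-1)})$ (cf.\ \eqref{equ:241106-e25}), not $(\mu_{j,\rho}^{(1)})^{-(N-2)/2}$, since the extra factor $(x_h-x_{j,\rho,h}^{(1)})$ in the definition of $B_{\rho,j,h}$ contributes an additional $1/\mu_{j,\rho}^{(1)}$ after rescaling. Neither point affects the logic of the argument.
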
 
	
\begin{proof}
 First, we may define the following quadratic form
\begin{align*}
    Q_1(u, v)=-\int_{\partial B_\theta\left(x_{j, \rho}\right)} \frac{\partial v}{\partial \nu} \frac{\partial u}{\partial x_i}-\int_{\partial B_\theta\left(x_{j, \rho}\right)} \frac{\partial u}{\partial \nu} \frac{\partial v}{\partial x_i}+\int_{\partial B_\theta\left(x_{j, \rho}\right)}\langle\nabla u, \nabla v\rangle \nu_i .
\end{align*}
Next,  taking $ \Omega^{\prime}=B_\theta(x_{j,\lambda}^{(1)}) $ in \eqref{equ:241105-e1}, from \eqref{equ:241017-e6} and \eqref{equ:241106-e1}, we have 
			\begin{align}\label{equ:241106-e17}
				& \text { LHS of \eqref{equ:241105-e1} }=\sum_{l=1}^k \sum_{m=1}^k \frac{A A_{\rho, l} Q_1\left(G(x_{m, \rho}^{(1)}, x), G(x_{l, \rho}^{(1)}, x)\right)}{(\mu_{m, \rho}^{(1)})^{(N-2) / 2}} \\ \nonumber
				& \quad+\sum_{l=1}^k \sum_{h=1}^N \sum_{m=1}^k \frac{A B_{\rho, l, h} Q_1\left(G(x_{m, \rho}^{(1)}, x), \partial_h G(x_{l, \rho}^{(1)}, x)\right)}{(\mu_{m, \rho}^{(1)})^{(N-2) / 2}}+O\big (\frac{\ln \bar{\mu}_{\rho}}{\bar{\mu}_{\rho}^{(3 N-2) / 2}}\big),
			\end{align}

In order to obtain the right hand of \eqref{equ:241105-e1}, we shall do the following estimates:
In view of \eqref{equ:241106-e1} and \eqref{equ:241106-e14}, we have 
\begin{align*}
	\frac{  (\lambda_\rho^{(1)}-\lambda_\rho^{(2)}  )}{2\| u_\rho^{(1)}- u_\rho^{(2)} \|_{L^{\infty}}}  
	\int_{\partial \Omega^{\prime}}\left(u_{\rho}^{(1)}\right)^2 \cdot \nu_i=O(\frac{1}{\bar{\mu}_{\rho}^{ (\frac{3N}{2}-5)  }})+o(\frac{1}{\bar{\mu}^{N-1}_\rho}).
\end{align*}
Thus, we can observe that
	\begin{equation}\label{equ:241106-e18}
		\text { RHS of \eqref{equ:241105-e1} }=O(\frac{1}{\bar{\mu}_{\rho}^{ ( 3N-2)/2  }})+O(\frac{1}{\bar{\mu}_{\rho}^{ (\frac{3N}{2}-5)  }})+o(\frac{1}{\bar{\mu}^{N-1}_\rho})=o(\frac{1}{\bar{\mu}^{N-1}_\rho}).
	\end{equation}
		 Then from \eqref{equ:241106-e16}, \eqref{equ:241106-e17} and \eqref{equ:241106-e18}, we find 
        \begin{align*}
			&\sum_{l=1}^k \sum_{h=1}^N \sum_{m=1}^k \frac{B_{\rho, l, h} Q_1 \big(G(x_{m, \rho}^{(1)}, x), \partial_h G(x_{l, \rho}^{(1)}, x) \big)}{(\mu_{m, \rho}^{(1)})^{(N-2) / 2}}=o(\frac{1}{\bar{\mu}^{N-1}_\rho}).
		\end{align*}

	A very similar calculation progress as the Section 5 in \cite{CLP}, can be used to show \eqref{equ:241106-e19}. So we omit it here.
	Thus, we also have the following estimate:
	 \begin{align}\label{equ:241106-e19}
	 	 Q_1\left(G\left(x_{m, \rho}^{(1)}, x\right), \partial_h G (x_{l, \rho}^{(1)}, x)\right)= \begin{cases}-\partial_{x_i x_h}^2 R\big(x_{j, \rho}^{(1)}\big), & \text { for } l, m=j,  \vspace{2mm} \\  \vspace{2mm}
	 	 	D_{x_i x_h}^2 G\big(x_{m, \rho}^{(1)}, x_{j, \rho}^{(1)}\big), & \text { for } m \neq j, l=j, \\ \vspace{2mm}
	 	 	D_{x_i} \partial_h G\big(x_{l, \rho}^{(1)}, x_{j, \rho}^{(1)}\big), & \text { for } m=j, l \neq j, \\  
	 	 	0, & \text { for } l, m \neq j .\end{cases}
	 \end{align}
	
	 Then \eqref{equ:241106-e18} and \eqref{equ:241106-e19} imply 
    \begin{align}\label{equ:241106-e20}
				\sum_{h=1}^N B_{\rho, j, h} & \left(\frac{\partial_{x_i x_h}^2 R(x_{j, \rho}^{(1)})}{(\mu_{j, \rho}^{(1)})^{(N-2) / 2}}-\sum_{l \neq j} \frac{\partial_{x_i x_h}^2 G(x_{j, \rho}^{(1)}, x_{l, \rho}^{(1)})}{(\mu_{l, \rho}^{(1)})^{(N-2) / 2}}\right) \\ \nonumber
				& \quad-\sum_{h=1}^N \sum_{m \neq j} \frac{B_{\rho, m, h} D_{x_h} \partial_{x_i} G(x_{j, \rho}^{(1)}, x_{m, \rho}^{(1)})}{(\mu_{j, \rho}^{(1)})^{(N-2) / 2}}=o(\frac{1}{\bar{\mu}^{N-1}_\rho}) .
		\end{align}
Since for any $i=\{1, \cdots, k N\}$, there exists some $j \in\{1, \cdots, k\}$ satisfying $i \in [(j-1) N+1, j N]$ $\cap \mathbb{N}^{+}$, by direct calculations, we have
	\begin{align*}
		\frac{\partial^2 \Psi_k(x, \mu)}{\partial y_i \partial y_m}=\mu_j^{N-2} \frac{\partial^2 R\left(x_j\right)}{\partial y_i \partial y_m}-\sum_{l \neq j}^k \mu_j^{\frac{N-2}{2}} \mu_l^{\frac{N-2}{2}} & \frac{\partial^2 G\left(x_j, x_l\right)}{\partial y_i \partial y_m}, \\ 
		& \text { if } m \in[(j-1) N+1, j N] \cap \mathbb{N}^{+},
	\end{align*}
	and
	\begin{align*}
	\frac{\partial^2 \Psi_k(x, \mu)}{\partial y_i \partial y_m}=-\mu_j^{\frac{N-2}{2}} \mu_s^{\frac{N-2}{2}} \frac{\partial^2 G\left(x_j, x_s\right)}{\partial y_i \partial y_m}, ~\text{if}~ m \in[(s-1) N+1, s N] \cap \mathbb{N}^{+}~\text{and}~   s \neq j.
		\end{align*}
	
	So from \eqref{equ:241106-e20}, we can obtain
	\begin{align}\label{equ:241106-e22}
			\left(D_{x x}^2 \Psi_k(x, \mu)\right)_{(x, \mu)=\left(a^k,\mu^k \right)} \widetilde{B}_{\rho, k}=\Big(o(\frac{1}{\bar{\mu}_{\rho}^{N-1}}), \cdots, o(\frac{1}{\bar{\mu}_{\rho}^{N-1}})\Big)^T,
	\end{align}
	where $\widetilde{B}_{\rho, k}$ is the vector in \eqref{equ:241106-e15}. Noting that $\left(a^k, \mu^k\right)$ is a nondegenerate critical point of $\Psi_k$, we see
	\begin{align}\label{equ:241106-e23}
			\operatorname{Rank}\left(D_{(x, \mu) x}^2 \Psi_k(x, \mu)\right)_{(x, \mu)=\left(a^k, \mu^k\right)}=N k .
	\end{align}

	Hence \eqref{equ:241106-e21}, \eqref{equ:241106-e22} and \eqref{equ:241106-e23} imply that
	\begin{align}\label{equ:241106-e24}
			B_{\rho, j, h}=o\big(\frac{1}{\bar{\mu}_{\rho}^{N-1}}\big) \text {, for } j=1, \cdots, k \text { and } h=1, \cdots, N.
	\end{align}

	On the other hand, from \eqref{equ:241015-e12}, \eqref{equ:241106-e4} and \eqref{equ:241016-e1}, we find
	\begin{align} \label{equ:241106-e25}
		B_{\rho, j, h} & =\int_{B_{\mu_{j, \rho}^{(1)} d}(0)} x_h C_{\rho}\Big(\frac{x}{\mu_{j, \rho}^{(1)}}+x_{j, \rho}^{(1)}\Big) \xi_{\rho, j}(x) d x \\ \nonumber
		& =\frac{1}{(\mu_{j, \rho}^{(1)})^{N-1}} \int_{\mathbb{R}^N} x_h U_{0,1}^{\frac{4}{N-2}}\Big(\sum_{l=1}^N c_{j, l} \psi_l(x)\Big) d x+o\big(\frac{1}{\bar{\mu}_{\rho}^{N-1}}\big) \\ \nonumber
		& =-\frac{N-2}{2 N} \int_{\mathbb{R}^N} \frac{|x|^2}{\left(1+|x|^2\right)^{\frac{N}{2}}} \frac{c_{j, h}}{(\mu_{j, \rho}^{(1)})^{N-1}}+o\big(\frac{1}{\bar{\mu}_{\rho}^{N-1}}\big) .
	\end{align}

	Then \eqref{equ:241106-e24} and \eqref{equ:241106-e25} imply \eqref{equ:241106-e26}.
	
\end{proof}

		{\bf{Proof of Theorem \ref{th-4}} }
		For any given $a^k=\left(a_1, \cdots, a_k\right)$, since $M_k\big(a^k\big)$ is a positive matrix and $\left(a^k, \mu^k\right)$ is a nondegenerate critical point of $\Psi_k$, then from\cite{MA} and \cite{CLP}, we find a solution of \eqref{equ:Main-Problem} with \eqref{1.4}. 
	Next, we prove the local uniqueness of solutions to \eqref{equ:Main-Problem} with \eqref{1.4}.
	
	From \eqref{equ:241107-e1}, it holds
	\begin{equation*}
		\left|\xi_{\rho}(x)\right|=O\Big(\frac{1}{R^2}\Big)+O \Big(  \frac{ 1 }{    \bar{\mu}_\rho^{N-2}  } \Big), \text { for } x \in \Omega \backslash \bigcup_{j=1}^k B_{R(\mu_{j, \rho}^{(1)})^{-1}}\big(x_{j, \rho}^{(1)}\big),
	\end{equation*}
	which implies that for any fixed $\gamma \in(0,1)$ and small $\rho$, there exists $R_1>0$,
	\begin{equation}\label{equ:241107-e4}
		\left|\xi_{\rho}(x)\right| \leq \gamma, ~x \in \Omega \backslash \bigcup_{j=1}^k B_{R_1(\mu_{j, \rho}^{(1)})^{-1}}\big(x_{j, \rho}^{(1)}\big) .
	\end{equation}
	Also for the above fixed $R_1$, from \eqref{equ:241107-e2} and \eqref{equ:241106-e26}, we have
	\begin{equation*}
		\xi_{\rho, j}(x)=o(1) ~\text { in } B_{R_1}(0), ~j=1, \cdots, k .
	\end{equation*}
	We know $\xi_{\rho, j}(x)=\xi_{\rho}\Big(\frac{x}{\mu_{j, \rho}^{(1)}}+x_{j, \rho}^{(1)}\Big)$, so
	\begin{align}\label{equ:241107-e3}
			\xi_{\rho}(x)=o(1), x \in \bigcup_{j=1}^k B_{R_1(\mu_{j, \rho}^{(1)})^{-1}}\big(x_{j, \rho}^{(1)}\big) .
	\end{align}

	Hence for any fixed $\gamma \in(0,1)$ and small $\rho$, \eqref{equ:241107-e4} and \eqref{equ:241107-e3} imply $\left|\xi_{\rho}(x)\right| \leq \gamma$ for all $x \in \Omega$, which is in contradiction with $\left\|\xi_{\rho}\right\|_{L^{\infty}(\Omega)}=1$. As a result, $u_{\rho}^{(1)}(x) \equiv u_{\rho}^{(2)}(x)$ for small $\rho$.
	\qed

\section{appendix A. Some basic estimates}	
%
%
%
%
%
%
%
%

The Green's function $G(x, \cdot)$ is the solution of

$$
\begin{cases}-\Delta G(x, \cdot)=\delta_x, & \text { in } \Omega, \\ G(x, \cdot)=0, & \text { on } \partial \Omega,\end{cases}
$$
where $\delta_x$ is the Dirac function. For $G(x, y)$, we have the following form
$$
G(x, y)=S(x, y)-H(x, y),(x, y) \in \Omega \times \Omega,
$$
where $S(x, y)=\frac{1}{(N-2) \omega_N|y-x|^{N-2}}$ is the singular part and $H(x, y)$ is the regular part of $G(x, y), \omega_N$ is a measure of the unit sphere of $\mathbb{R}^N$. For any $x \in \Omega$, we denote $R(x):=H(x, x)$, which is called the Robin function.

	\begin{lemma}
	For any small fixed $d>0$ and $j=1,2 \cdots, k$, it holds
		\begin{align}\label{equ:241007-e1}
		P U_{x_{j,\lambda}, \mu_{j,\lambda}}(x)=O\big(\frac{1}{\mu_{j,\lambda}^{(N-2) / 2}}\big) \text { in } C^1\big(\Omega \backslash B_d(x_{j,\lambda})\big).
	\end{align}

	Moreover, if we define $\varphi_{x_{j,\lambda}, \mu_{j,\lambda}}(x)=U_{x_{j,\lambda}, \mu_{j,\lambda}}(x)-P U_{x_{j,\lambda}, \mu_{j,\lambda}}(x)$, then it holds
	\begin{align}\label{equ:241114-e5}
		\varphi_{x_{j,\lambda}, \mu_{j,\lambda}}(x)=O\big(\frac{1}{\mu_{j,\lambda}^{(N-2) / 2}}\big) \text { in }~ C^1(\Omega).
	\end{align}

\end{lemma}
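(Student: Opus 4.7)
The lemma asserts two $C^1$ pointwise bounds: one for the projection $PU_{x_{j,\lambda},\mu_{j,\lambda}}$ away from the concentration point, and one for the global error $\varphi_{x_{j,\lambda},\mu_{j,\lambda}} := U_{x_{j,\lambda},\mu_{j,\lambda}} - PU_{x_{j,\lambda},\mu_{j,\lambda}}$. My plan is to treat $\varphi$ first and then deduce the estimate on $PU$ by subtraction.

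By the definition of the projection, $\varphi := \varphi_{x_{j,\lambda},\mu_{j,\lambda}}$ solves the Dirichlet problem
\begin{equation*}
\begin{cases}
-\Delta \varphi = 0 & \text{in } \Omega,\\
\varphi = U_{x_{j,\lambda},\mu_{j,\lambda}} & \text{on } \partial\Omega.
\end{cases}
\end{equation*}
Since $x_{j,\lambda} \to a_j \in \Omega$, for $\lambda$ small we have $\mathrm{dist}(x_{j,\lambda},\partial\Omega) \ge c_0 > 0$, so on $\partial\Omega$
\begin{equation*}
0 \le U_{x_{j,\lambda},\mu_{j,\lambda}}(x) = \frac{[N(N-2)]^{(N-2)/4}\,\mu_{j,\lambda}^{(N-2)/2}}{(1+\mu_{j,\lambda}^2|x-x_{j,\lambda}|^2)^{(N-2)/2}} = O\!\left(\mu_{j,\lambda}^{-(N-2)/2}\right).
\end{equation*}
By the maximum principle, $\|\varphi\|_{L^\infty(\Omega)} = O(\mu_{j,\lambda}^{-(N-2)/2})$. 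Since $\varphi$ is harmonic on $\Omega$, standard interior $C^1$ estimates give the same $C^1$ bound on any compact subset; for the $C^1$ bound up to the boundary I would invoke boundary Schauder estimates, using that the boundary data $U_{x_{j,\lambda},\mu_{j,\lambda}}\big|_{\partial\Omega}$ are smooth (no singularities on $\partial\Omega$) with $C^{1,\alpha}$-norm of order $\mu_{j,\lambda}^{-(N-2)/2}$, because each derivative of $U$ in $x$ at distance $\ge c_0$ from $x_{j,\lambda}$ costs a factor of $\mu_{j,\lambda}$ in the numerator but loses $\mu_{j,\lambda}^{2}$ from the denominator. This yields
\begin{equation*}
\|\varphi_{x_{j,\lambda},\mu_{j,\lambda}}\|_{C^1(\overline\Omega)} = O\!\left(\mu_{j,\lambda}^{-(N-2)/2}\right),
\end{equation*}
which is \eqref{equ:241114-e5}.

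For the first estimate \eqref{equ:241007-e1}, write $PU_{x_{j,\lambda},\mu_{j,\lambda}} = U_{x_{j,\lambda},\mu_{j,\lambda}} - \varphi_{x_{j,\lambda},\mu_{j,\lambda}}$. On $\Omega\setminus B_d(x_{j,\lambda})$, direct differentiation of $U_{x_{j,\lambda},\mu_{j,\lambda}}$ gives $U_{x_{j,\lambda},\mu_{j,\lambda}}(x) = O(\mu_{j,\lambda}^{-(N-2)/2})$ and $|\nabla_x U_{x_{j,\lambda},\mu_{j,\lambda}}(x)| = O(\mu_{j,\lambda}^{-(N-2)/2})$ uniformly (since $|x-x_{j,\lambda}| \ge d$). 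Combined with the global bound on $\varphi$ just proved, this gives $PU_{x_{j,\lambda},\mu_{j,\lambda}} = O(\mu_{j,\lambda}^{-(N-2)/2})$ in $C^1(\Omega\setminus B_d(x_{j,\lambda}))$.

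The only mildly delicate step is the boundary $C^1$ control of the harmonic extension: one must ensure the constant in the Schauder estimate does not blow up. Since $\partial\Omega$ is fixed and smooth, and the $C^{1,\alpha}(\partial\Omega)$-norm of the boundary datum scales cleanly as $\mu_{j,\lambda}^{-(N-2)/2}$ (because the distance from $x_{j,\lambda}$ to $\partial\Omega$ stays bounded below uniformly in $\lambda$), this causes no issue. Alternatively, one could bypass Schauder by writing $\varphi(x) = \int_{\partial\Omega} P(x,y)U_{x_{j,\lambda},\mu_{j,\lambda}}(y)\,d\sigma(y)$ using the Poisson kernel and differentiating under the integral, which makes the uniform $C^1$-bound transparent.
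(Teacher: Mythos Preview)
Your argument is correct and is precisely the standard proof of this classical fact. The paper does not give its own argument here; its entire proof is a reference to \cite[Proposition 1]{Rey}, whose content is exactly the harmonic-extension-plus-maximum-principle reasoning you wrote out (establish $\varphi$ harmonic with boundary data $U|_{\partial\Omega}=O(\mu^{-(N-2)/2})$, apply the maximum principle and elliptic/Schauder estimates, then recover the bound on $PU$ away from the spike by subtraction). So you have simply supplied the details behind the citation; there is no genuine difference in approach.

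One cosmetic remark: your sentence about ``each derivative costing a factor of $\mu_{j,\lambda}$ and losing $\mu_{j,\lambda}^{2}$'' slightly misstates the bookkeeping --- in fact the $\mu$-power is unchanged when differentiating $U$ at fixed positive distance (the gain $\mu^{2}$ from the numerator is exactly cancelled by the extra factor $(1+\mu^{2}|x-x_{j,\lambda}|^{2})\sim\mu^{2}$ in the denominator), so $|\nabla U|=O(\mu^{-(N-2)/2})$ on $\partial\Omega$ for the same reason as $U$ itself. This does not affect the conclusion.
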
 

\begin{proof}
	See   \cite[ Proposition 1]{Rey}.	
\end{proof}

Similar to \cite{CLP},  we show the local Pohozaev type identities:
\begin{equation}\label{equ:241105-e3}
	-\int_{\partial \Omega^{\prime}} \frac{\partial u_{\rho}}{\partial \nu} \frac{\partial u_{\rho}}{\partial x_i}+\frac{1}{2} \int_{\partial \Omega^{\prime}}\left|\nabla u_{\rho}\right|^2 \nu_i=\frac{N-2}{2 N} \int_{\partial \Omega^{\prime}} u_{\rho}^{\frac{2 N}{N-2}} \nu_i+\frac{\lambda_\rho}{2} \int_{\partial \Omega^{\prime}} u_{\rho}^2 \nu_i,
\end{equation}
and

\begin{equation}\label{equ:241105-e4}
	\begin{aligned}
		& -\int_{\partial \Omega^{\prime}} \frac{\partial u_{\rho}}{\partial \nu}\left\langle x-x_{j, \rho}, \nabla u_{\rho}\right\rangle+\frac{1}{2} \int_{\partial \Omega^{\prime}}\left|\nabla u_{\rho}\right|^2\left\langle x-x_{j, \rho}, \nu\right\rangle+\frac{2-N}{2} \int_{\partial \Omega^{\prime}} \frac{\partial u_{\rho}}{\partial \nu} u_{\rho} \\
		& \quad=\frac{N-2}{2 N} \int_{\partial \Omega^{\prime}} u_{\rho}^{\frac{2 N}{N-2}}\left\langle x-x_{j, \rho}, \nu\right\rangle+\frac{\lambda_\rho}{2} \int_{\partial \Omega^{\prime}} u_{\rho}^2\left\langle x-x_{j, \rho}, \nu\right\rangle-\lambda_\rho \int_{\Omega^{\prime}} u_{\rho}^2.
	\end{aligned}
\end{equation}

\vspace{1cm}
\noindent\textbf{Acknowledgments} Xiaoyu Zeng is supported by NSFC (Grant Nos. 12322106, 12171379, 12271417) . Huan-Song Zhou  is supported by NSFC (Grant Nos. 11931012, 12371118) .

\vskip.2truein  
\noindent\textbf{Data availability} Data sharing is not applicable to this article as no datasets were generated or analyzed during
the current study.

\vskip.2truein  
\noindent\textbf{Declarations} Conflict of interest The authors declare that there is no conflict of interest.

	\vspace{1cm}
	
\end{document}